\newtheorem{te}{Theorem}[section]
\newtheorem{prop}[te]{Proposition}
\newtheorem{co}[te]{Corollary}
\newtheorem{conj}[te]{Conjecture}
\newtheorem{qu}[te]{Question}
\newtheorem{lemme}[te]{Lemma}
\newtheorem{claim}[te]{Claim}
\theoremstyle{definition}
\newtheorem{de}[te]{Definition}
\newtheorem{ex}[te]{Example}
\theoremstyle{remark}
\newtheorem{rque}[te]{Remark}
\newlength{\plarg}
\title{Simple sharply-2 transitive groups}
\author{Simon Andr\'e and Katrin Tent}
\date{\today}
\begin{document}

\begin{minipage}{\linewidth}

\maketitle

\vspace{0mm}

\begin{abstract}
We construct simple sharply 2-transitive groups. Our result answers an open question of Peter Neumann. In fact, we prove that every sharply 2-transitive group $G$ of characteristic 0 embeds into a simple sharply 2-transitive group.
\end{abstract}

	
\end{minipage}

\vspace{3mm}

\section{Introduction}


\thispagestyle{empty}

Let $G$ be a group acting on a set $X$. The action is said to be \emph{sharply $n$-transitive} if for any two $n$-tuples $(x_1,\ldots,x_n)$ and $(y_1,\ldots,y_n)$ of distinct elements of $X$, there exists a unique element $g$ in $G$ such that $g\cdot x_i=y_i$ for every $1\leq i\leq n$. The group $G$ is called \emph{sharply $n$-transitive} if there exists a set $X$ on which $G$ acts sharply $n$-transitively. Every group is sharply 1-transitive since it acts sharply 1-transitively on itself by left multiplication, but the situation changes drastically when one considers sharply multiply transitive groups, i.e.\ when $n\geq 2$. 


\smallskip

For $n\geq 4$, the restrictions on the groups that can act sharply $n$-transitively on some set are extremely severe: in 1872, Jordan proved in \cite{Jor} that the only finite sharply $n$-transitive groups for $n\geq 4$ are the symmetric groups $S_n$ and $S_{n+1}$, the alternating group $A_{n+2}$, and the Mathieu groups $M_{11}$ and $M_{12}$, respectively for $n=4$ and $n=5$. The classification was completed in \cite[Chapitre IV, Th\'eor\`eme I]{Tits1} by Tits, who proved that there is no infinite sharply $n$-transitive group for $n\geq 4$ (see also \cite[Theorem 1.11]{Cam}). Stronger results were proved by Hall in \cite{Hall}, then by Yoshizawa in \cite{Yos}: if a group acts 4-transitively on an infinite set, then the stabilizer of any 4-tuple must be infinite.

\smallskip

For $n=2$ and $n=3$, the finite sharply $n$-transitive groups were completely classified by Zassenhaus in \cite{Z1} and \cite{Z2}. However, in contrast with the case where $n\geq 4$, there are infinite examples: given a field $K$, the affine group $\mathrm{AGL}_1(K)\simeq K\rtimes K^{\ast}$ acts sharply 2-transitively on $K$, and the projective linear group $\mathrm{PGL}_2(K)$ acts sharply 3-transitively on the projective line. 

\smallskip






All the finite sharply $2$-transitive groups classified by Zassenhaus, as well as $\mathrm{AGL}_1(K)$, are of the form $G=A\rtimes H$ where $A$ denotes a non-trivial proper abelian subgroup of $G$ (equivalently, $G\simeq K\rtimes K^{\ast}$ for some near-field $K$). Until recently, it was a famous open question whether all sharply $2$-transitive groups must be of this form. In \cite[Section 1.13]{Cam}, Cameron wrote: 

\smallskip

\begin{quote}
``The depth of our ignorance about infinite sharply 2-transitive groups is shown by the fact that it is not even known whether or not every such group has a regular normal subgroup.''
\end{quote}

\let\thefootnote\relax\footnote{Funded by the Deutsche Forschungsgemeinschaft (DFG, German Research Foundation) under Germany’s Excellence Strategy EXC 2044–390685587, Mathematics Münster: Dynamics–Geometry–Structure and by CRC 1442 Geometry: Deformations and Rigidity.}


The question also appears in the introduction of Kerby's book \cite{Kerby} in an equivalent form (``Is every near-domain a near-field?''), or in \cite[Problem 11.52]{unsolved}. Many results for specific classes of groups were proved in the direction of a positive answer (see for instance \cite{Tits2}, \cite{Kerby}, \cite{BN}, \cite{Tur}, \cite{GG2}, \cite{GMS}, and more recently \cite{ABW}, \cite{CT1}, \cite{CT2} in the context of groups of finite Morley rank), but the question was finally answered in the negative by Rips, Segev and Tent in \cite{RST} and by Rips and Tent in \cite{RT} (see also \cite{TZ} and \cite{GG}). In these papers, the authors constructed the first sharply $2$-transitive groups without a non-trivial proper abelian subgroup. Then, Tent exhibited in \cite{Tent1} the first infinite sharply $3$-transitive groups that do not look like $\mathrm{PGL}_2(K)$ (in the sense that the stabilizer of a point does not have a non-trivial proper abelian subgroup).

\smallskip

However, none of the sharply $2$-transitive groups constructed in the papers mentioned above are simple (for instance, the groups that appear in \cite{TZ} map onto the free group of rank 2, and hence they have plenty of normal subgroups), and the following question, posed by Peter Neumann, remained open.

\begin{qu}[P. Neumann]
Are there simple sharply $2$-transitive groups? 
\end{qu}

This question is natural since simple groups are furthest from having a non-trivial proper normal abelian subgroup. We answer this question positively. 

\begin{te}\label{existence}
There exist simple sharply 2-transitive groups.
\end{te}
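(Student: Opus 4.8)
The plan is to prove the sharper statement announced in the abstract --- every countable sharply $2$-transitive group of characteristic $0$ embeds into a simple one --- from which Theorem~\ref{existence} follows by starting, say, from $G_0=\mathrm{AGL}_1(\mathbb{Q})$. So fix a countable sharply $2$-transitive group $G_0$ of characteristic $0$ acting on a set $X_0$. I would build an ascending chain $G_0\le G_1\le G_2\le\cdots$ of countable sharply $2$-transitive groups of characteristic $0$, together with compatible actions on an ascending chain $X_0\subseteq X_1\subseteq\cdots$ (so that point stabilisers are coherent along the chain), and set $G_\infty=\bigcup_n G_n$ acting on $X_\infty=\bigcup_n X_n$. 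This union is again sharply $2$-transitive of characteristic $0$: two pairs of distinct points of $X_\infty$ already lie in some $X_n$, the element of $G_n$ matching them is unique in $G_\infty$ since a nontrivial element cannot fix two points at any finite stage, and an involution of $G_\infty$ is an involution of some $G_n$, hence has exactly one fixed point there and therefore also in $X_\infty$. Thus the whole problem is to force $G_\infty$ to be simple.

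A group is simple precisely when the normal closure of each nontrivial element is the whole group; since $G_\infty$ is a directed union of countable groups, this holds as soon as, for every $n$ and every pair $(a,b)\in G_n\times G_n$ with $a\neq1$, there is some $m\ge n$ with $b\in\langle\langle a\rangle\rangle_{G_m}$. So I would fix a standard bookkeeping assigning to each stage one such pair in a way that treats every pair occurring at any stage eventually (possible since each $G_n$ is countable), and pass from $G_n$ to $G_{n+1}$ by the following one-step embedding lemma, which carries all of the difficulty: \emph{given a countable sharply $2$-transitive group $G$ of characteristic $0$, an element $1\neq a\in G$ and an element $b\in G$, there is a countable sharply $2$-transitive group $G'$ of characteristic $0$ with $G\le G'$ (compatible actions) in which $b$ is a product of finitely many conjugates of $a^{\pm1}$, so that $b\in\langle\langle a\rangle\rangle_{G'}$}. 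Granting this, for $1\neq a\in G_\infty$ with $a,b\in G_k$ the pair $(a,b)$ gets treated at some stage $m\ge k$, hence $b\in\langle\langle a\rangle\rangle_{G_m}\subseteq\langle\langle a\rangle\rangle_{G_\infty}$; therefore $\langle\langle a\rangle\rangle_{G_\infty}=G_\infty$, and $G_\infty$ is a simple sharply $2$-transitive group containing $G_0$.

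The remaining, and essential, task is the one-step lemma. The idea is to first impose the identity $b=t_1a^{\epsilon_1}t_1^{-1}\cdots t_ra^{\epsilon_r}t_r^{-1}$ by adjoining new stable letters $t_1,\dots,t_r$ subject to that single relation --- a finite chain of HNN-type extensions and amalgams over $G$, with $r$ large and the signs chosen conveniently --- and then to embed the resulting group into a sharply $2$-transitive group of characteristic $0$. For the second step I would rely on the machinery for constructing sharply $2$-transitive groups of characteristic $0$ as directed limits of partial configurations glued by amalgams and HNN extensions and controlled by small-cancellation theory over the relevant (relatively, resp.\ acylindrically, hyperbolic) point-stabiliser structure, in the spirit of Rips--Segev--Tent and Rips--Tent. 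The hard points --- and the main obstacles --- are: (i) arranging that the relators realising the new relation can be taken to satisfy a strong enough small-cancellation condition while still being of the prescribed algebraic form, so that $G$ embeds without unintended collapse; (ii) recovering afterwards a malnormal ``point stabiliser'' subgroup admitting exactly two double cosets, so that $G'$ is again sharply $2$-transitive --- the analogue here of verifying that each step lands back in the class; and (iii) checking that the characteristic stays $0$, i.e.\ that no new fixed-point-free involution is created and that products of distinct involutions keep infinite order, which amounts to showing that the negative-curvature control is strong enough to exclude torsion appearing in the ``translation part.'' Once this is in place, the limit-and-bookkeeping argument above completes the proof.
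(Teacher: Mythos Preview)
Your high-level architecture --- an increasing union with bookkeeping, reducing simplicity to a one-step embedding lemma --- is sound and close in spirit to what the paper does. But the proposal does not actually prove the one-step lemma; it only names three ``hard points'' and gestures at small cancellation and the Rips--Tent machinery. That is precisely where the content of the paper lies, and the route you sketch diverges from the paper's in ways that matter.

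First, you miss a simplification that organises the whole argument: Lemma~\ref{simple} says a sharply $2$-transitive group of characteristic $\neq 2$ is simple if and only if it is generated by its translations. Since all translations are conjugate, the normal closure of \emph{any} nontrivial element already contains $\langle T_G\rangle$; so your one-step lemma need not treat an arbitrary pair $(a,b)$ at all --- it suffices to make each $b$ a product of translations. This is exactly what the paper does (Theorem~\ref{theo2b}), and it removes the dependence on $a$ entirely.

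Second, the method you propose for the one-step lemma is not the paper's and is not obviously viable. Adjoining letters $t_1,\dots,t_r$ subject to the single relation $b=\prod t_ia^{\epsilon_i}t_i^{-1}$ is a one-relator quotient of $G\ast F_r$, not ``a finite chain of HNN-type extensions and amalgams''; to then complete to a sharply $2$-transitive group you would need the quotient to stay inside a class for which the Rips--Tent completion works. The paper explicitly warns (end of the Introduction) that the naive move of adding a stable letter to conjugate an arbitrary element to a translation can create a nontrivial element centralising two distinct involutions, which destroys sharp $2$-transitivity. Your hard point~(ii) is exactly this obstruction, and you do not say how to avoid it; invoking small cancellation over a putative relatively/acylindrically hyperbolic structure is not justified, and in any case the paper uses no small cancellation.

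What the paper actually does is different and more controlled. It isolates a class $\mathcal{C}$ of groups (Definition~\ref{class}), shows all sharply $2$-transitive groups of characteristic $0$ lie in $\mathcal{C}$, and proves $\mathcal{C}$ is stable under two specific moves, both analysed via Bass--Serre theory: (a) an HNN extension $\langle G,z\mid zgz^{-1}=h\rangle$ where $h$ is a \emph{prepared} element satisfying strong hypotheses --- $\langle h\rangle$ malnormal, containing no translation, and not conjugate into $\langle g\rangle$ (Proposition~\ref{jesaispas}); and (b) amalgamation $G\ast_{i=a}H_m$ with an explicit auxiliary group $H_m$ built from a hyperbolic orbifold group, which manufactures such an $h$ of the right order that is already a product of two translations (Proposition~\ref{coro2}, Corollary~\ref{coro20}). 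Combining (b) then (a), one makes $g$ conjugate to a product of two translations while staying in $\mathcal{C}$, and then completes to a sharply $2$-transitive group by the Rips--Tent step (Theorem~\ref{theo4}, Proposition~\ref{jesaispas0}). Your proposal contains neither the translation criterion, nor the class $\mathcal{C}$, nor the preparation step via $H_m$; without something playing those roles, your one-step lemma remains an assertion.
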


\begin{rque}These groups are necessarily infinite in view of the classification of Zassenhaus mentioned above, but it is worth pointing out that the non-existence of finite simple sharply $2$-transitive groups follows more directly from a famous theorem of Frobenius (which has an easy proof in the case of finite sharply 2-transitive groups), much older than the result of Zassenhaus: if a finite group $G$ contains a malnormal subgroup $H$ (see Definition \ref{malnormal}), then there exists a normal subgroup $N$ of $G$ such that $G=N\rtimes H$. This result, combined with the fact that the centralizer of an element of order 2 in a sharply $2$-transitive group is a proper malnormal subgroup, shows that there is no finite simple sharply $2$-transitive group. The existence of an element of order 2 is an easy fact, which is explained below. Last, it is worth noting that Frobenius' theorem fails in the infinite case.\end{rque}

\begin{rque}Note that there are well-known examples of infinite simple sharply 3-transitive groups, namely $\mathrm{PSL}_2(K)$ when $K$ is a field in which every element is a square (as in this case $\mathrm{PSL}_2(K)$ and $\mathrm{PGL}_2(K)$ coincide). To our knowledge, these are essentially the only examples, and thus the groups given by Theorem \ref{existence} above are the first infinite simple sharply multiply transitive groups that do not resemble $\mathrm{PSL}_2(K)$.
\end{rque}



Our proof takes inspiration from the construction given in \cite{RT}, but our approach is more flexible and allows the construction of sharply 2-transitive groups with various properties. The following result illustrates this flexibility. See also Theorem \ref{theo2} below.

\begin{te}[see Theorem \ref{existence2b}]\label{existence2}For every odd integer $n\geq 1$, there exists a sharply 2-transitive group with a simple normal subgroup of index $n$.\end{te}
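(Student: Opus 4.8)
The plan is to run a \emph{relative} version of the construction behind Theorem~\ref{existence}: instead of forcing the whole group to be simple, we single out a normal subgroup of index $n$ at the start and force only \emph{it} to become simple, all the while keeping its index equal to $n$. Since the case $n=1$ is Theorem~\ref{existence}, we may assume $n\geq 3$, and it suffices to produce one sharply 2-transitive group $G$ with a simple normal subgroup of index $n$. The first step is to pin down this subgroup abstractly. In a sharply 2-transitive group of characteristic $\neq 2$ the involutions form a single conjugacy class $J$, so any normal subgroup either contains $J$ or is disjoint from it; in the latter case the image of an involution is an element of order $2$ of the quotient, which is impossible when the index is odd. Hence every normal subgroup of odd index contains $\langle J\rangle$, and in particular our simple normal subgroup $N$ contains $\langle J\rangle$. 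As $\langle J\rangle$ is a nontrivial normal subgroup of $N$ (nontrivial because there are involutions, normal in $N$ because it is normal in $G$), simplicity forces $N=\langle J\rangle$. This is the only point at which the oddness of $n$ intervenes. So the goal becomes: build a sharply 2-transitive group $G$ of characteristic $0$ in which the subgroup $\langle J\rangle$ generated by the involutions is simple and of index $n$.

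For the construction I would fix a group $Q$ of order $n$ (say $Q=\mathbb{Z}/n\mathbb{Z}$) and build an increasing chain $G_0\leq G_1\leq G_2\leq\cdots$ of sharply 2-transitive groups of characteristic $0$ with compatible actions, together with a compatible system of surjections $\phi_i\colon G_i\twoheadrightarrow Q$; writing $J_i$ for the involutions of $G_i$, one automatically has $\langle J_i\rangle\subseteq\ker\phi_i$, since $[G_i:\ker\phi_i]=n$ is odd. The seed is $G_0=\mathbb{Q}\rtimes\mathbb{Q}^{\times}=\mathrm{AGL}_1(\mathbb{Q})$ acting on $\mathbb{Q}$: it has characteristic $0$ and surjects onto $Q$ (for instance through $G_0^{\mathrm{ab}}\cong\mathbb{Q}^{\times}$, which has $\mathbb{Z}/n\mathbb{Z}$ as a quotient), while $\langle J_0\rangle=\mathbb{Q}\rtimes\{\pm1\}$ has \emph{infinite} index in $G_0$ (as $G_0/\langle J_0\rangle\cong\mathbb{Q}^{\times}/\{\pm1\}$ is free abelian of countably infinite rank), so bringing its index down to $n$ is genuine work. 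The inductive step uses the amalgamation and completion machinery underlying Theorem~\ref{existence} --- the existence of sharply 2-transitive completions of partial actions, Proposition~\ref{sacerdote1}, and the accompanying ``do no harm'' lemmas --- enhanced so as to carry along the auxiliary surjection onto the fixed finite group $Q$; concretely, one works in the category of sharply 2-transitive groups of characteristic $0$ equipped with a surjection onto $Q$, and checks that it supports the amalgams used in the absolute case. Following a bookkeeping enumeration, at stage $i$ one treats one of three kinds of tasks: (a) realise a prescribed partial bijection, so that the union $G=\bigcup_i G_i$ acts sharply 2-transitively; (b) \emph{simplicity}: for a given nontrivial $a\in\ker\phi$, force the normal closure of $a$ inside $\ker\phi$ to contain an involution --- the same conjugacy-forcing as in the $n=1$ construction, relativised to $\ker\phi$; (c) \emph{collapse}: for a given $b\in\ker\phi$, force $b$ to become a product of involutions, so that in the limit $\ker\phi=\langle J\rangle$. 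Throughout, the do-no-harm lemmas are used to keep the structure sharply 2-transitive and of characteristic $0$ and --- the new constraint --- to keep $\phi_{i+1}$ a well-defined surjection onto $Q$ extending $\phi_i$, i.e.\ to ensure that the moves of type (b) and (c) never drag an element of $G_i\setminus\ker\phi_i$ into the kernel.

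Passing to the limit, $G=\bigcup_i G_i$ is sharply 2-transitive (by the tasks of type (a)) and of characteristic $0$, and $\phi=\bigcup_i\phi_i\colon G\twoheadrightarrow Q$ has kernel $N:=\ker\phi$ of index $n$, with $N=\langle J\rangle$ by the tasks of type (c). To see that $N$ is simple, let $1\neq a\in N$ and let $M$ be the normal closure of $a$ in $N$; by the tasks of type (b), $M$ contains an involution. Now $J$ is a single $N$-conjugacy class: the centralizer $C_G(\iota_x)$ of an involution equals the point stabilizer $G_x$ (in characteristic $\neq 2$, $\iota_x$ is the unique involution fixing $x$, so $G_x$ centralizes $\iota_x$, while conversely $C_G(\iota_x)$ stabilizes the fixed point of $\iota_x$, hence lies in $G_x$), and, $N$ being a nontrivial normal subgroup of the $2$-transitive group $G$, it is transitive, so $G=G_xN=C_G(\iota_x)N$ and the $G$-conjugacy class $J=\iota_x^G$ is a single $N$-conjugacy class (the number of $N$-classes it contains being $[G:N\,C_G(\iota_x)]=1$). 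Hence $M$, being normal in $N$ and containing one involution, contains all of $J$, so $M\supseteq\langle J\rangle=N$; thus $M=N$, and $N$ is simple. This proves Theorem~\ref{existence2b}.

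The step I expect to be the main obstacle is the inductive step: upgrading the amalgamation/completion machinery so that every elementary move can be performed ``over'' the fixed finite quotient $Q$ --- without ever shrinking the image of $\phi$ --- while still achieving all the conjugacy identifications needed both for sharp 2-transitivity and for the simplicity of $\langle J\rangle$. This is the relative analogue of the core technical content behind Theorem~\ref{existence}; I expect it to go through by the same methods once the finite quotient is threaded through the amalgamation lemmas, but verifying the compatibility of the collapse moves (c) and the simplicity moves (b) with fixing $\phi$ is where essentially all the difficulty lies.
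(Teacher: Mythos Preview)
Your outline is a reasonable strategy and the reduction in your first paragraph (that for odd $n$ the sought normal subgroup must be $\langle J\rangle=\langle T_G\rangle$) is correct and useful. However, as you yourself note, the entire technical content --- running the construction ``over'' a fixed surjection onto $Q$ --- is left unverified, and the tools you invoke (``Proposition~\ref{sacerdote1}'', ``sharply 2-transitive completions of partial actions'', ``do no harm lemmas'') are not part of this paper. The machinery here is the class~$\mathcal{C}$ and the stability Propositions~\ref{jesaispas0}--\ref{free}; there is no partial-action formalism, and threading a surjection $\phi_i$ through those propositions is not automatic (for instance, Proposition~\ref{jesaispas} conjugates $g$ to a specific element $h$ living in a new amalgamated factor $H_m$, and you would need to assign $\phi$-values to all of $H_m$ and to the stable letter consistently with the relation $zgz^{-1}=h$ while keeping $\phi$ surjective --- this needs an argument).

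The paper avoids this bookkeeping altogether by a different device. Rather than carrying a surjection, it seeds the construction with $G_0=\mathrm{AGL}_1(\mathbb{Q}(\zeta_n))$, which contains genuine elements of order~$n$, and then runs the same procedure as in Theorem~\ref{theo2b} with two modifications: (i) when turning elements into products of two translations, \emph{skip} the elements lying in a cyclic subgroup of order~$n$; (ii) add further HNN letters making all cyclic subgroups of order~$n$ conjugate to one another. One then sets $N=\langle T_\Gamma\rangle$ and checks directly that $\Gamma/N$ has order~$n$: every element not of order~$n$ is a product of translations and dies; the conjugating letters from~(ii) die as well, collapsing all the order-$n$ cyclic groups to a single one; and no element of order~$n$ dies. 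Thus the quotient emerges only at the end, and no surjection has to be maintained along the chain. Your approach would work in principle, but the paper's trick of ``protect the order-$n$ elements and conjugate their cyclic groups together'' is what makes the inductive step trivial rather than the main obstacle.
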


\begin{rque}
This result is probably true for $n$ even as well, but we only prove it for $n$ odd to avoid technicalities.
\end{rque}

Before stating our next result (which generalizes Theorem \ref{existence}), we need a few definitions. Recall that an \emph{involution} is an element of order 2 in a group. Note that if a group $G$ acts sharply 2-transitively on a set $X$, the stabilizer of any pair of points $(x,y)$ with $x\neq y$ is trivial, and this implies that $G$ contains plenty of involutions: indeed, for any $(x,y)$ with $x\neq y$, there is an element $g\in G$ mapping $(x,y)$ to $(y,x)$, and hence $g^2$ is an involution as it stabilizes $(x,y)$. A sharply $2$-transitive group is said to be of \emph{characteristic 0} if every pair of distinct involutions generates an infinite dihedral group and involutions have fixed points. This definition is motivated by the following observation: if $K$ is a field of characteristic 0, every pair of distinct involutions in $\mathrm{AGL}_1(K)$ generates an infinite dihedral group. Similarly, one can define the notion of a sharply 2-transitive group of characteristic $p$ for any prime number $p\neq 2$ (the characteristic $p=2$ is defined in a different way), and we have $\mathrm{char}(\mathrm{AGL}_1(K))=\mathrm{char}(K)$. See Subsection \ref{Sharp} for further details.


\smallskip

Given a group $G$, we denote by $I_{G}$ the set of involutions of $G$, and by $T_{G}=I_{G}^2\setminus \lbrace 1\rbrace$ the set of non-trivial products of two involutions. An element of $T_{G}$ is called a \emph{translation}. The following result shows that every sharply 2-transitive group of characteristic 0 embeds into a simple sharply 2-transitive group (which is necessarily of characteristic 0 as well).

\begin{te}[see Theorem \ref{theo2b}]\label{theo2}Let $G$ be a sharply 2-transitive group of characteristic 0. There exists a simple sharply 2-transitive group $\Gamma$ that contains $G$, that has the same cardinality as $G$, and such that every element of $\Gamma$ is a translation or a product of two translations (and hence every element is a product of four involutions).\end{te}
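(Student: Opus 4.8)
The plan is to realize $\Gamma$ as a direct limit of sharply 2-transitive groups, obtained by alternating two kinds of steps, in the spirit of the HNN/amalgamation constructions of \cite{RT} but arranged so as to kill every normal subgroup. Throughout, one works within the category of sharply 2-transitive groups of characteristic $0$; the key technical input (which I would take as available from the earlier sections, in particular the machinery behind Theorem \ref{theo2b}) is a \emph{malnormal amalgamation / embedding theorem}: given a sharply 2-transitive group $G$ of characteristic $0$ and a prescribed partial configuration — e.g.\ two translations $t_1,t_2$ and the wish that some element be conjugate to a product $t_1t_2$, or two elements $g,h\ne 1$ and the wish that they be conjugate — there is a sharply 2-transitive group $G'\supseteq G$ of the same cardinality in which the requirement is met, and the point stabilizer of $G$ sits malnormally (or at least the characteristic-$0$ hypothesis is preserved). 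I would isolate this as the black box and reduce everything else to it.

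First I would fix the target cardinality $\kappa=|G|$ (infinite, since $G$ is sharply 2-transitive), and set up a bookkeeping function enumerating, for each intermediate group of size $\le\kappa$, all pairs $(N\text{-witness})$ of the form: a pair of nontrivial elements $(x,y)$ that we must eventually force to be conjugate, and a pair of translations $(s,t)$ together with an element $z$ that we must eventually force to equal or be conjugate into $\{s't':s',t'\in T\}$. Then I would build a continuous chain $(G_\alpha)_{\alpha<\kappa}$ with $G_0=G$, $G_\lambda=\bigcup_{\alpha<\lambda}G_\alpha$ at limits, and at successor stages apply the black box to handle the next task on the list. Two interleaved families of tasks: (a) \emph{simplicity tasks} — for every $g\ne 1$ appearing in some $G_\alpha$ and every $h\ne1$, arrange in a later group that $h$ lies in the normal closure of $g$, which by the standard trick it suffices to make $g$ and $g^{-1}$ (or $g$ and some fixed translation) conjugate, iterating so that every nontrivial element becomes conjugate to every other, forcing the eventual union to have no proper nontrivial normal subgroup; (b) \emph{translation-expression tasks} — for every element $z$ appearing at some stage, arrange at a later stage that $z\in T_{G_\beta}$ or $z\in T_{G_\beta}\cdot T_{G_\beta}$. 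Taking $\Gamma=\bigcup_{\alpha<\kappa}G_\alpha$, one checks: $\Gamma$ is sharply 2-transitive (a directed union of sharply 2-transitive groups on a compatible directed system of sets is sharply 2-transitive — this needs the actions to be arranged coherently, which the amalgamation step must guarantee), $|\Gamma|=\kappa=|G|$, $\Gamma$ has characteristic $0$ (each $G_\alpha$ does and dihedral/fixed-point conditions are unions-of-chains absolute), $\Gamma$ is simple by (a), and every element is a product of two translations by (b), hence a product of four involutions since a translation is a product of two involutions by definition of $T$.

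The order of the steps matters: I would run both task-families cofinally in $\kappa$ using a pairing so that no requirement is starved, and crucially I would make each conjugation step of type (a) compatible with not destroying earlier progress — i.e.\ the black box must not un-conjugate pairs already made conjugate, and must not spoil the characteristic-$0$ hypothesis, which is exactly the delicate point. A subtlety in (a): to get a \emph{simple} group it is enough that any two nontrivial elements are conjugate in $\Gamma$; but one must be careful that involutions, translations, and general elements can all be linked — here the characteristic-$0$ structure (all involutions conjugate, the $2$-transitivity of the action on itself) should let one reduce "conjugate $x$ to $y$" to a bounded number of elementary conjugation-amalgamation moves, and I would verify there is no parity or torsion obstruction (e.g.\ an involution cannot be conjugate to a translation, so one routes through: every element conjugate to some translation, and all translations conjugate — then the normal closure of any nontrivial element contains all translations, and since every element is a product of $\le 2$ translations by (b), the normal closure is everything).

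The main obstacle, as in all such constructions, is the amalgamation/black-box step: proving that one can adjoin the desired conjugating element (or realize $z$ as a product of two translations) \emph{inside a sharply 2-transitive group of characteristic $0$ of the same cardinality}, with the point stabilizer remaining malnormal so that sharp 2-transitivity is preserved in the limit. This is where the real work of the paper lies (the earlier sections, culminating in Theorem \ref{theo2b}), and my plan above is essentially the "organizational wrapper" that converts that one-step embedding lemma, applied transfinitely with careful bookkeeping, into the global statement. The secondary obstacle is checking that the directed union genuinely acts sharply 2-transitively — this forces the embeddings $G_\alpha\hookrightarrow G_{\alpha+1}$ to be compatible with equivariant inclusions $X_\alpha\hookrightarrow X_{\alpha+1}$ of the underlying sets, which the amalgamation construction should provide canonically (identify $X_\alpha$ with $G_\alpha/H_\alpha$ for a point stabilizer $H_\alpha$, and ensure $H_{\alpha+1}\cap G_\alpha=H_\alpha$).
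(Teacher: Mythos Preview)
Your organizational wrapper is essentially right, but two points need correction.

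Task (a) is a detour you should drop. You cannot make all nontrivial elements conjugate --- an involution will never be conjugate to an element of infinite order --- and your own last paragraph shows (a) is unnecessary anyway: in any sharply $2$-transitive group of characteristic $\ne 2$, the normal closure of a nontrivial element already contains every translation (this is exactly Lemma~\ref{simple}, a two-line computation with involutions), so once (b) is achieved the group is automatically simple. The paper's argument is organized entirely around this lemma and never introduces conjugacy tasks of your type.

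The black box you posit is also not quite the one the paper supplies, and the discrepancy is where the content lives. You ask for an embedding of sharply $2$-transitive groups making a prescribed $g$ conjugate to a prescribed $h$ of the same order; but this is genuinely obstructed --- if $h$ is a translation, or if $h$ centralizes an involution, the HNN extension fails to preserve the relevant class (see the remark after Proposition~\ref{jesaispas}). The paper's route around this is to work not inside sharply $2$-transitive groups but in a larger class $\mathcal{C}$ (Definition~\ref{class}), and to first \emph{manufacture} a suitable target: given $g$ of order $n>2$, amalgamate with an auxiliary group $H_n$ (Proposition~\ref{coro2}) to create a fresh element $h$ that is visibly a product of two translations and whose cyclic group is malnormal and translation-free; only then HNN-extend to conjugate $g$ to this specific $h$ (Proposition~\ref{jesaispas}). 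Involutions get a separate amalgam (Proposition~\ref{coro1}). These moves leave the sharply $2$-transitive world but stay in $\mathcal{C}$, and one then invokes Theorem~\ref{theo4} to complete back to a sharply $2$-transitive group before the next round. So your task (b) is the right target, but the one-shot embedding you assume must be replaced by this three-step amalgamate--conjugate--complete mechanism; the intermediate class $\mathcal{C}$ is not optional scaffolding but the device that makes the step possible. (Incidentally, your worry about compatibility of the actions in the limit dissolves once you use Lemma~\ref{well_known}: in characteristic $\ne 2$ the action is canonically the conjugation action on involutions, so the inclusions are automatically equivariant.)
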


\begin{rque}\label{theo3}In fact, for every group $H$ without $2$-torsion, and for every group $G$ as above, there exists a simple sharply 2-transitive group $\Gamma$ that satisfies the conclusions of the theorem, and that has the following property: for every $i\in I_{\Gamma}$, the centralizer $\mathrm{C}_{\Gamma}(i)$ contains a subgroup isomorphic to $H$ (see Theorem \ref{theo3b}). However, there seem to be more restrictions on the centralizer of a translation, and in particular it is still an open problem whether this group has to be abelian (in characteristic 0). We refer the reader to \cite[Section 3]{CT1} for further details.\end{rque}

Our results reinforce the idea, already suggested by \cite{RST} and \cite{RT}, that the class of infinite sharply 2-transitive groups is large, and that there is no hope of classifying these groups without additional assumption. One possible assumption under which one can expect to classify sharply 2-transitive groups comes from model theory: Borovik and Nesin have asked whether a sharply 2-transitive group of \emph{finite Morley rank} is isomorphic to $\mathrm{AGL}_1(K)$ for some algebraically closed field $K$ (see \cite[Question B.61]{BN} and \cite[Chapter 4]{BN} for the definition of a group of finite Morley rank), and a positive answer is conjectured in \cite{ABW} (Conjecture 1). Despite recent progress (see \cite{ABW}, \cite{CT2} and \cite{CT1}), this conjecture remains largely open. If this conjecture is true, then the following one must be true as well.

\begin{conj}
There is no simple sharply 2-transitive group of finite Morley rank.
\end{conj}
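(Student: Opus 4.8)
The statement to be established is in fact the implication spelled out just above it: granting the Borovik--Nesin conjecture that every sharply $2$-transitive group of finite Morley rank is isomorphic to $\mathrm{AGL}_1(K)$ for some algebraically closed field $K$, one must conclude that none of these groups is simple. The plan is a short deduction resting on one structural observation about the affine group.

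I would argue by contradiction. Suppose $G$ is a simple sharply $2$-transitive group of finite Morley rank. Applying the Borovik--Nesin conjecture, $G\cong \mathrm{AGL}_1(K)\cong K\rtimes K^{\ast}$ for some algebraically closed field $K$. Then I would exhibit a proper non-trivial normal subgroup of $G$, namely the translation subgroup $(K,+)$, i.e.\ the unique regular normal abelian subgroup of $\mathrm{AGL}_1(K)$: it is normal because a conjugate of a translation by an affine map is again a translation; it is non-trivial since $|K|\geq 2$; and it is proper since $K^{\ast}$ is non-trivial, because $K$, being algebraically closed, is infinite. This contradicts the simplicity of $G$. (Equivalently: $\mathrm{AGL}_1(K)$ is metabelian, hence solvable, hence — being non-abelian for $|K|>2$ — not simple.) Therefore no simple sharply $2$-transitive group of finite Morley rank exists.

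There is essentially no obstacle in the deduction itself; all the weight sits in the hypothesis, the Borovik--Nesin conjecture, which remains largely open (see \cite{ABW}, \cite{CT1}, \cite{CT2}). It is instructive to contrast this with the finite case discussed after Theorem \ref{existence}: there, Frobenius' theorem \emph{unconditionally} splits off a regular normal subgroup complementing the point stabiliser, so the non-existence of finite simple sharply $2$-transitive groups needs no unproved input, whereas in the finite Morley rank setting Frobenius' theorem is unavailable and one is forced to route the argument through the (conjectural) identification of $G$ with an affine group. In fact the argument would already go through from a much weaker input: if a sharply $2$-transitive group of finite Morley rank were known to possess a non-trivial proper abelian normal subgroup, that subgroup would automatically be regular (hence proper and non-trivial), giving the same contradiction.
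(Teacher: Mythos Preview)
The proposal is correct and matches the paper's treatment: the statement is a \emph{conjecture}, and the paper does not prove it but merely asserts in the sentence preceding it that it follows from the Borovik--Nesin conjecture. You have correctly identified this and spelled out the (trivial) implication that the paper leaves implicit, namely that $\mathrm{AGL}_1(K)$ has the non-trivial proper normal subgroup $(K,+)$ and hence is not simple.
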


\begin{rque}
Note that the conjecture above is also a particular case of the algebraicity conjecture of Cherlin and Zilber, which states that any infinite simple group of finite Morley rank must be an algebraic group over an algebraically closed field.
\end{rque}

Lemma \ref{simple} and \cite[Proposition 7.1]{CT2} show that in a simple sharply 2-transitive group of finite Morley rank, there is a bound $m$ such that every element is a product of at most $m$ translations, which implies immediately that the non-split sharply 2-transitive groups constructed in \cite{RT} do not have finite Morley rank. However, it is not obvious \emph{a priori} that the groups constructed in the present paper do not have finite Morley rank (note in particular that they satisfy the above property with $m=2$), and we therefore ask the following question.

\begin{qu}
Is it true that the simple sharply 2-transitive groups constructed in this paper do not have finite Morley rank?
\end{qu}



Then, it is worth noting that the sharply 2-transitive groups constructed in \cite{RST} and \cite{RT}, as well as in this paper, are not finitely generated. Moreover, the affine group $\mathrm{AGL}_1(K)$ is not finitely generated whenever $K$ is infinite. These observations motivate the following question.

\begin{qu}
Are there infinite finitely generated (simple) sharply 2-transitive groups? Are there finitely presented such groups?
\end{qu}



We conclude this introduction by explaining briefly our construction, which builds on \cite{RT}. First, let us outline the construction of \cite{RT}. It is easy to see that a transitive group action on a set $X$ (with $\vert X\vert\geq 2$) is sharply 2-transitive if and only if the stabilizer $G_x$ of a point $x\in X$ acts regularly (i.e.\ sharply 1-transitively) on $X\setminus \lbrace x\rbrace$, i.e.\ if $G_x$ acts transitively on $X\setminus \lbrace x\rbrace$ and $G_{(x,y)}$ is trivial for any $y\neq x$ (see Lemma \ref{lemme02}). Based on this observation, the construction given in \cite{RT} can be easily explained. The goal is to construct a group $G$ that acts sharply 2-transitively on the set of its involutions, denoted by $I_G$, or equivalently that the following conditions hold:
\begin{enumerate}
\item involutions are conjugate to each other;
\item no non-trivial element centralizes two distinct involutions;
\item for some involution $i\in I_G$, the action of $C_G(i)$ on $I_G\setminus \lbrace i\rbrace$ is transitive.
\end{enumerate}
Start with the sharply 2-transitive group $G_0=\mathrm{AGL}_1(\mathbb{Q})=\mathbb{Q}\rtimes \mathbb{Q}^{\ast}$, and perturb it by taking a free product with $\mathbb{Z}$. Fix two involutions $i\neq j$ in $G_0$. Note that the new group $G_1=\mathrm{AGL}_1(\mathbb{Q})\ast\mathbb{Z}$ is not sharply 2-transitive group, but its involutions are still conjugate to each other, and the centralizers of pairs of distinct involutions remain trivial (conditions (1) and (2) above). The reason why this group is not sharply 2-transitive lies in the fact that the third condition above does not hold, since taking a free product with $\mathbb{Z}$ creates a lot of new involutions and $C_{G_1}(i)$ has no reason to be transitive on the new set of involutions minus $\lbrace i\rbrace$. The idea is now to correct this defect of transitivity: if an involution $k$ is not yet in the orbit of $j$ under the action of $C_{G_1}(i)$, define a new group $G_2=\langle G_1, t \ \vert \ [t,i]=1, \ tjt^{-1}=k\rangle$ (well-defined since both pairs $\lbrace i,j\rbrace$ and $\lbrace i,k\rbrace$ generate an infinite dihedral group). In this new group, $k$ is in the orbit of $j$ under the action of $C_{G_2}(i)$. Then iterate this construction again and again, and let $\Gamma$ be the union of this increasing sequence of groups. By construction, the centralizer of $i$ in $\Gamma$ acts transitively on the set of involutions of $\Gamma$ different from $i$. However, one needs to ensure that centralizers of pairs of distinct involutions remain trivial throughout the construction process, and for this we need to choose the element $k$ very carefully at each step. Here is an example where the centralizer of a pair becomes non-trivial: suppose that the dihedral group $\langle i,k\rangle$ above is strictly contained into another dihedral group $\langle i,k'\rangle$, and that the next group in our sequence is $G_3=\langle G_2, t' \ \vert \ [t',i]=1, \ t'jt'^{-1}=k'\rangle$. Then, there is an element $j'$ in $\langle i,j\rangle$ such that $t'j't'^{-1}=k$, but we know that $j'=zjz^{-1}$ for some $z\in C_{G_0}(i)$ since $C_{G_0}(i)$ acts transitively on $I_{G_0}\setminus \lbrace i\rbrace$, as $G_0$ is sharply 2-transitive. Hence, the non-trivial element $t'zt^{-1}$ centralizes both $i$ and $k$. The following picture illustrates this situation (we think of involutions as points, and we think of a dihedral group generated by two distcint involutions as a line through the corresponding two points; this point of view is useful for understanding the construction).

\begin{center}
\begin{tikzpicture}[scale=1]
\node[draw,circle, inner sep=1.7pt, fill, label=below:{$k'$}] (k') at (2,0.5) {};
\node[draw,circle, inner sep=1.7pt, fill, label=above:{$j$}] (j) at (2,1.5) {};
\node[draw,circle, inner sep=1.7pt, fill, label=above:{$i$}] (i) at (0,1) {};
\node[draw,circle, inner sep=1.7pt, fill, label=above:{$j'$}] (j') at (4,2) {};
\node[draw,circle, inner sep=1.7pt, fill, label=below:{$k$}] (k) at (4,0) {};
\node[draw=none, label=above:{$z$}] (1) at (3,2.3) {};
\node[draw=none, label=above:{$t'$}] (2) at (4.3,0.55) {};
\node[draw=none, label=above:{$t'$}] (3) at (1.7,0.55) {};
\node[draw=none, label=above:{$t$}] (4) at (3.2,0.55) {};
\node[draw=none] (5) at (6,2.5) {};
\node[draw=none] (6) at (6,-0.5) {};
\draw[-,>=latex] (i) to (5);
\draw[-,>=latex] (i) to (6);
\draw [->,dashed] (j) to (k);
\draw [->,dashed] (j) to (k');
\draw [->,dashed] (j') to (k);
\draw [->,dashed] (j) to [out=60,in=120] (j');
\end{tikzpicture}
\end{center}

This example shows that $k$ has to be chosen minimal in the sense that $\langle i,k\rangle$ cannot be nested into a larger dihedral group $\langle i,k'\rangle$. The main point of the proof in \cite{RT} is to show that this careful choice is possible at each step (intuitively, this means that the line passing through $i$ and $k$ is discrete). Then, an easy argument shows that the group $\Gamma$ has no non-trivial proper normal abelian subgroup. However, this group is not simple: indeed, Lemma \ref{simple} below claims that a sharply 2-transitive group is simple if and only if it is generated by the set of its translations (i.e.\ the products of two distinct involutions), and it is clear that the new letter added at each step in the construction of \cite{RT} (for instance the letter $t$ above, in the definition of $G_2$) does not belong to the subgroup generated by the translations, and hence $\Gamma$ is not simple. The strategy is therefore clear: one has to ensure that the new letters that appear in the course of the procedure belong to the subgroup generated by the translations. For instance, if one considers the group $G_2$ defined above, a naive idea would be to add a new letter $z$ satisfying the relation $ztz^{-1}$ for some translation $g\in G_1$, but then it is not hard to show that $g$ centralizes two distinct involutions in $G_2$, and thus the construction does not produce a sharply 2-transitive group. The main point of our construction of a simple sharply 2-transitive group is to succeed in implementing the strategy outlined above. For this purpose, we introduce a class of groups (see Definition \ref{class}), denoted by $\mathcal{C}$, by refining the conditions considered in \cite[Theorem 1.1]{RT}. Our class $\mathcal{C}$ is much bigger than the one appearing in \cite{RT} (as evidence, it contains all the sharply 2-transitive groups of characteristic 0) and we show that it is preserved under various HNN extensions and amalgamated free products. Thanks to this robustness, this class $\mathcal{C}$ proved to be a convenient framework in which one can carry out the construction of simple sharply 2-transitive groups.

\section{Preliminaries}

\subsection{Sharply 2-transitive groups}\label{Sharp}






\begin{de}\label{malnormal}A subgroup $H$ of a group $G$ is called \emph{malnormal} if, for every $g\in G\setminus H$, the intersection $gHg^{-1}\cap H$ is trivial. It is called \emph{quasi-malnormal} if, for every $g\in G\setminus H$, the intersection $gHg^{-1}\cap H$ has order at most 2.
\end{de}


The following two lemmas are easy and well-known.

\begin{lemme}[Characterization of malnormality]
A subgroup $H$ in a group $G$ is malnormal if and only if the group $G$ acts on a non-empty set $X$ in such a way that the following conditions hold:
\begin{enumerate}
    \item the action is transitive;
    \item stabilizers of pairs of distinct points are trivial;
    \item $H$ is the stabilizer of a point.
\end{enumerate}
\end{lemme}

\begin{proof}If $H$ is malnormal, then the action of $G$ on the set $G/H$ satisfies the conditions. Conversely, if the conditions hold, set $H=G_x$ and observe that $H\cap gHg^{-1}=G_x\cap G_{g\cdot x}=G_{(x,g\cdot x)}$ is trivial if $g\cdot x \neq x$, and equal to $H$ if $g\cdot x = x$, i.e.\ if $g$ belongs to $G_x=H$.\end{proof}


\begin{lemme}[Characterization of sharply 2-transitivity]\label{lemme02}
Let $G$ be a group acting on a set $X$. Suppose that $X$ has at least two elements. The action is sharply 2-transitive if and only the following three conditions hold:
\begin{enumerate}
        \item the action is transitive;
        \item stabilizers of pairs of distinct points are trivial;
        \item for some (or any) $x\in X$, the action of the stabilizer $G_x$ on $X\setminus \lbrace x\rbrace$ is transitive.
\end{enumerate}
\end{lemme}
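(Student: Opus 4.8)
The plan is to prove both implications directly from the definition of a sharply $2$-transitive action, using the single remark that such an action is $2$-transitive with a built-in uniqueness clause. Note first that $2$-transitivity already forces ordinary transitivity once $|X|\geq 2$: for distinct $a,b$ the pair $(a,b)$ can be sent to $(b,a)$, so in particular some element sends $a$ to $b$, and the case $a=b$ is trivial.

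For the forward direction I would argue as follows. Condition (1) is the remark just made. For (2): if $g$ fixes two distinct points $x_1,x_2$, then both $g$ and the identity send the pair $(x_1,x_2)$ to itself, so the uniqueness clause gives $g=1$; hence $G_{(x_1,x_2)}$ is trivial. For (3): fix $x\in X$ and take distinct $y,z\in X\setminus\{x\}$ (the case $y=z$ being handled by the identity); the unique element $g$ with $g\cdot(x,y)=(x,z)$ fixes $x$, so $g\in G_x$ and $g\cdot y=z$, which shows $G_x$ acts transitively on $X\setminus\{x\}$.

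For the converse, the first step is to promote (3) from ``some $x$'' to ``every $x$'': if $G_{x_0}$ is transitive on $X\setminus\{x_0\}$ and $x$ is any point, pick (by (1)) an element $h$ with $h\cdot x_0=x$, so that $G_x=hG_{x_0}h^{-1}$ is transitive on $h\cdot(X\setminus\{x_0\})=X\setminus\{x\}$. Now let $(x_1,x_2)$ and $(y_1,y_2)$ be pairs of distinct points. By (1) there is $h$ with $h\cdot x_1=y_1$; since $h$ is injective, $h\cdot x_2\neq y_1$, so $h\cdot x_2$ and $y_2$ both lie in $X\setminus\{y_1\}$, and by the upgraded (3) there is $k\in G_{y_1}$ with $k\cdot(h\cdot x_2)=y_2$. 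Then $g:=kh$ maps $(x_1,x_2)$ to $(y_1,y_2)$, proving existence. For uniqueness, if $g_1$ and $g_2$ both map $(x_1,x_2)$ to $(y_1,y_2)$, then $g_2^{-1}g_1\in G_{(x_1,x_2)}=\{1\}$ by (2), so $g_1=g_2$.

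There is no genuine obstacle here — the statement is elementary. The only point that requires a moment's attention is the ``some (or any)'' in condition (3): one must check that the transitivity of the point stabilizer can be used at the specific point $y_1$ arising in the existence argument rather than at a prescribed base point, which is precisely the promotion step above and follows immediately from (1).
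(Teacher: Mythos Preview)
Your proof is correct and complete; the paper itself does not supply a proof for this lemma, merely flagging it as ``easy and well-known,'' and your argument is precisely the standard one. The one point worth handling carefully --- the promotion of condition (3) from ``some $x$'' to ``any $x$'' via conjugation by an element carrying $x_0$ to $x$ --- you have done explicitly and correctly.
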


\begin{rque}
In view of the previous lemma, the stabilizer $G_x$ is malnormal.\end{rque}


Let $G$ be a group acting sharply 2-transitively on a set $X$ (with $\vert X\vert \geq 2$). Then $G$ has involutions, and all involutions are conjugate. Moreover, if every involution has a fixed point (which is necessarily unique), then all translations are conjugate (recall that a translation is a product of two distinct involutions), and we define the \emph{characteristic} of $G$ as follows: if the order of a translation is infinite, let $\mathrm{char}(G)=0$, and if the order of a translation is finite, we define the characteristic of $G$ as this order. If involutions act freely on $X$, we say that $G$ has characteristic 2. Note that the characteristic is necessarily $0$ or a prime number $p$, and that the case $p = 2$ occurs if and only if involutions are fixed point
free (as otherwise two distinct involutions cannot commute). Last, note that if $\mathrm{char}(G)\neq 2$, then any two distinct involutions are conjugate by an involution (because there is an element swapping their fixed points).

The following result is crucial. It shows that whenever $\mathrm{char}(G)\neq 2$, any sharply 2-transitive action of $G$ on a set $X$ is equivalent to the action of $G$ on $I_G$ by conjugation. More precisely, there is a $G$-equivariant bijection between $X$ and $I_G$. The reader may for instance consult \cite{Tent2}.

\begin{lemme}\label{well_known}Let $G$ be a group acting sharply 2-transitively on a set $X$. Suppose that the characteristic of $G$ is not $2$. Then every involution $i$ of $G$ fixes a unique point $x\in X$, and there is a $G$-equivariant bijection between the set of involutions $I_G$ of $G$ and the set $X$, defined by mapping $i\in I_G$ to its unique fixed point in $X$.\end{lemme}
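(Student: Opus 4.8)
The plan is to show that the map $\phi\colon I_G\to X$ assigning to an involution its fixed point is a well-defined $G$-equivariant bijection. \emph{Well-definedness:} since $\mathrm{char}(G)\neq 2$, involutions are not fixed-point-free, and as all involutions are conjugate, each involution fixes at least one point; if an involution $i$ fixed two distinct points $x\neq y$, it would lie in $G_{(x,y)}$, which is trivial by sharp $2$-transitivity, contradicting $i\neq 1$. So the fixed point of $i$ is unique and $\phi$ is defined. \emph{Equivariance:} for $g\in G$ and $i\in I_G$ with fixed point $x$, the conjugate $gig^{-1}$ is an involution fixing $g\cdot x$ (since $(gig^{-1})(g\cdot x)=g\cdot(i\cdot x)=g\cdot x$), so $\phi(gig^{-1})=g\cdot x=g\cdot\phi(i)$. \emph{Surjectivity:} $G$ acts transitively on $X$ and, by the first step, some involution $i_0$ has some fixed point $x_0$; given $x\in X$, choose $g$ with $g\cdot x_0=x$, so that $\phi(gi_0g^{-1})=x$.

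The only substantial point --- and the step I expect to be the main obstacle --- is injectivity of $\phi$, i.e.\ that two involutions with the same fixed point coincide. Suppose $i\neq j$ are involutions, both fixing $x$. Then $i,j\in G_x$, and $t:=ij$ is a nontrivial element of $G_x$; since $i$ and $j$ both fix $x$, so does $t$, hence $t$ fixes $x$ and only $x$ (pair stabilizers being trivial). Moreover $t$ is a translation and $iti^{-1}=t^{-1}$, so $\langle i,t\rangle$ is dihedral --- infinite when $\mathrm{char}(G)=0$ --- and each of its involutions $t^{k}i$ also fixes $x$ (because $t\in G_x$). The goal is then to contradict the fact that $G_x$ acts regularly on $X\setminus\{x\}$ (Lemma \ref{lemme02}, together with triviality of pair stabilizers).

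Equivalently, what has to be shown is the classical structural fact that no nontrivial element of a sharply $2$-transitive group of characteristic $\neq 2$ centralizes two distinct involutions --- equivalently, that each point stabilizer contains a unique involution (the ``$-1$''). I would either import this statement from the literature on sharply $2$-transitive groups (see \cite{Tent2}), or prove it directly by analysing the dihedral subgroups generated by pairs of involutions sharing a fixed point and exploiting the regular action of the point stabilizer on the complement of that point. Granting injectivity, $\phi$ is the desired $G$-equivariant bijection; in particular the given action of $G$ on $X$ is equivalent to the conjugation action of $G$ on $I_G$.
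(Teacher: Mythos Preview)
The paper gives no proof of this lemma, referring instead to \cite{Tent2}; your proposal does the same for the key injectivity step while spelling out the routine verifications (well-definedness, equivariance, surjectivity), so your treatment matches the paper's approach and is in fact more complete. One minor remark: the two formulations you label as equivalent (``no nontrivial element centralizes two distinct involutions'' versus ``each point stabilizer contains a unique involution'') are only equivalent \emph{a posteriori}, once the bijection $\phi$ is established; what you actually need for injectivity is the second statement, and that is indeed the standard fact supplied by the reference.
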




We will also appeal to the following easy characterisation of simplicity for sharply 2-transitive groups.

\begin{lemme}\label{simple}
A sharply 2-transitive group $G$ of characteristic $\neq 2$ is simple if and only if it generated by the set of translations $T_G$.
\end{lemme}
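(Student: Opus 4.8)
The plan is to prove both directions; the forward implication is immediate, and the reverse one rests on a single short computation. For the forward direction, I would first note that $T_G$ is stable under conjugation: a conjugate of a product of two involutions is again such a product, and it is nontrivial if and only if the original is. Hence $\langle T_G\rangle$ is a normal subgroup of $G$. Since $\mathrm{char}(G)\neq 2$, involutions have fixed points, which forces $|X|\geq 3$ (for $|X|=2$ one gets $G\cong\mathbb Z/2\mathbb Z$ with a fixed-point-free involution, i.e.\ characteristic $2$); so $|I_G|=|X|\geq 3$ and $T_G\neq\emptyset$, giving $\langle T_G\rangle\neq 1$. Simplicity of $G$ then yields $\langle T_G\rangle=G$.

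For the converse, suppose $G=\langle T_G\rangle$ and let $N\trianglelefteq G$ be nontrivial; I would show $N=G$. First, $N$ is transitive on $X$: the action of $G$ on $X$ is faithful (its kernel lies in the trivial stabilizer of a pair of points) and primitive (every $2$-transitive action is), and the $N$-orbits form a $G$-invariant partition of $X$ into blocks, so they are either all singletons (impossible, as then $N=1$) or $N$ is transitive. Via the $G$-equivariant bijection of Lemma \ref{well_known}, I may replace $X$ by $I_G$ with the conjugation action, so $N$ acts transitively on $I_G$ by conjugation.

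Now pick distinct involutions $i_0,j\in I_G$ and an element $a\in N$ with $a i_0 a^{-1}=j$. The key observation is that
\[
j i_0 = a i_0 a^{-1} i_0 = a\,(i_0 a^{-1} i_0^{-1}),
\]
and since $N$ is normal the factor $i_0 a^{-1} i_0^{-1}$ lies in $N$, so $j i_0\in N$. As $j\neq i_0$, this element $j i_0$ is a nontrivial translation, so $N\cap T_G\neq\emptyset$. Because $\mathrm{char}(G)\neq 2$ all translations are conjugate in $G$, and normality of $N$ then forces $T_G\subseteq N$; hence $G=\langle T_G\rangle\subseteq N$, so $N=G$ and $G$ is simple.

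The proof is short and I do not expect a serious obstacle; the one step that requires an idea rather than bookkeeping is the displayed identity, which produces a translation inside an \emph{arbitrary} nontrivial normal subgroup using only that $N$ is transitive on $I_G$. The remaining inputs—faithfulness and primitivity of sharply $2$-transitive actions, the identification $X\cong I_G$, and conjugacy of all translations in characteristic $\neq 2$—are standard or already recorded in the excerpt, although I would double-check the $|X|=2$ edge case to confirm it is genuinely excluded by the characteristic hypothesis.
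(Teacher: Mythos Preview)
Your proof is correct and uses the same key identity as the paper: given $a\in N$ not centralizing an involution $i_0$, the element $(ai_0a^{-1})i_0=a\cdot(i_0a^{-1}i_0^{-1})$ is a translation lying in $N$, and conjugacy of translations then gives $T_G\subseteq N$. The only difference is how you locate such an $a$: you pass through primitivity of the action and Lemma~\ref{well_known} to get $N$ transitive on $I_G$, whereas the paper observes directly that a nontrivial element of $G$ centralizes at most one involution, so any nontrivial $g\in N$ fails to centralize some involution $i$. Your route works, but the detour through primitivity and the identification $X\cong I_G$ is unnecessary for this lemma.
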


\begin{proof}
Let $N$ be a normal and nontrivial subgroup of $G$. Let $i\in G$ be an involution, and let $g\in N$ be an element that does not centralize $i$ (such an element exists since every nontrivial element of $G$ centralizes at most one involution). Observe that $igi$ belongs to $N$, and hence $igig^{-1}$ belongs to $N$. Note that $igig^{-1}$ is a translation, and that all translations are conjugate in $G$. As a consequence, $N$ contains all the translations and hence $\langle T_G\rangle\subset N$, which concludes the proof of the proposition.\end{proof}

\begin{rque}This result shows that the groups constructed in \cite{RT} are not simple.\end{rque}


\subsection{Groups acting on trees}\label{trees}

In this subsection, we collect a few well-known results about groups acting on trees.

\begin{de}
A geodesic metric space $(T,d)$ is a \emph{tree} if every geodesic triangle is a tripod. An isometry $\gamma$ of $T$ is called \emph{elliptic} if it fixes a point, and \emph{hyperbolic} otherwise. The tree is called \emph{simplicial} if the infimum of the
distances between vertices is strictly greater than $0$.
\end{de}

The following example will be constantly used in this paper. We refer the reader who is not familiar with Bass-Serre theory to \cite{Serre} or \cite{Bog}.

\begin{ex}
If a group $G$ splits as an amalgamated product or as an HNN extension, then $G$ acts by isometries on the Bass-Serre tree of the splitting, which is a simplicial tree. 
\end{ex}

\begin{prop}
Let $(T,d)$ be a tree, and let $g$ be a hyperbolic isometry of $T$. Then there exists a unique embedded line in $T$, denoted by $A(g)$ and called the \emph{axis} of $g$, on which $g$ acts as translation by $\ell(g)=\inf_{x\in X} d(x,g x)>0$. This real number is called the \emph{translation length} of $g$.
\end{prop}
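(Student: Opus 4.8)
The plan is to set $\ell(g):=\inf_{x\in T}d(x,gx)$, to exhibit a point realizing this infimum, and to check that the set of minimizers is the desired line. First I would fix a point $x_0$ with $\delta:=d(x_0,gx_0)$ minimal; for the simplicial (Bass--Serre) trees used in this paper such a minimizer exists because the displacements of vertices form a subset of $[0,\infty)$ that is bounded away from $0$ off $\{0\}$, and $g$ has no fixed vertex since it is hyperbolic (for a general tree one invokes completeness of $T$, or notes that the descent below must terminate). As $g$ is hyperbolic, $\delta>0$.

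The heart of the argument is the claim that $gx_0\in[x_0,g^2x_0]$. To prove it, let $c$ be the median of the tripod on $\{x_0,gx_0,g^2x_0\}$ (the common point of the three sides) and suppose $c\neq gx_0$. Put $a:=d(x_0,c)$. Since $g$ is an isometry, it carries this tripod to the one on $\{gx_0,g^2x_0,g^3x_0\}$, so $gc$ lies on $[gx_0,g^2x_0]$ with $d(gx_0,gc)=a$, while $c$ lies on $[gx_0,g^2x_0]$ with $d(gx_0,c)=\delta-a$; hence $d(c,gc)=|\delta-2a|$. Now $a=\delta$ would force $c=gx_0$, and $a=0$ would force (applying $g^{-1}$) $g^{-1}x_0\in[x_0,gx_0]$ at distance $\delta=d(x_0,gx_0)$ from $x_0$, i.e.\ $g^{-1}x_0=gx_0$, contradicting hyperbolicity; so $0<a<\delta$ and $d(c,gc)=|\delta-2a|<\delta$, contradicting the minimality of $\delta$. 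This proves the claim.

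Granting the claim, the segments $\sigma_n:=g^n([x_0,gx_0])$ satisfy $\sigma_n\cap\sigma_{n+1}=\{g^{n+1}x_0\}$ (apply the claim to the $g^n$-translates), so $A:=\bigcup_{n\in\mathbb Z}\sigma_n$ is an embedded bi-infinite geodesic on which $g$ acts as translation by $\delta$; I would set $A(g):=A$. To obtain $\delta=\ell(g)$ and that $A$ is exactly the minimizing set, fix an arbitrary $x$, let $p$ be its nearest-point projection onto the line $A$, and observe that $[x,p]\cup[p,gp]\cup[gp,gx]$ is an embedded arc — because $[x,p]\cap A=\{p\}$, $[gp,gx]\cap A=\{gp\}$ (apply $g$ to the former), $[p,gp]\subseteq A$, and $p\neq gp$ — hence equals $[x,gx]$. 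Since $g$ translates $A$ by $\delta$, this gives $d(x,gx)=\delta+2\,d(x,A)\geq\delta$, with equality iff $x\in A$.

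For uniqueness, suppose $A'$ is another embedded line on which $g$ acts as a translation, by $t$ say. Every $y\in A'$ has $d(y,gy)=t$, and by the displacement formula $t=\delta+2\,d(y,A)$, so $d(\cdot,A)$ is constant on $A'$. If this constant were positive, then $A'$ and $A$ would be disjoint; but projecting $A'$ onto $A$ and picking $y,y'\in A'$ with distinct projections $p,p'$, one gets $[p,p']\subseteq[y,y']\subseteq A'$ (again because $[y,p]\cup[p,p']\cup[p',y']$ is embedded), so $A'\cap A\neq\varnothing$, a contradiction. Hence the constant is $0$, i.e.\ $A'\subseteq A$, and therefore $A'=A$. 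The only genuinely delicate point is the existence of a minimizer for $d(\cdot,g\cdot)$, which is where the hypothesis on $T$ (simplicial, or complete) enters; everything else is routine manipulation of tripods and projections in a tree.
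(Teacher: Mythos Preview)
The paper does not prove this proposition; it is stated without proof in the preliminaries (Section~2.2) as one of several ``well-known results about groups acting on trees,'' so there is nothing to compare your argument against on the paper's side.

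Your argument is the standard one and is correct except for one step in the uniqueness paragraph. You write that if the constant distance $d(\cdot,A)$ on $A'$ were positive, you would ``project $A'$ onto $A$ and pick $y,y'\in A'$ with distinct projections $p,p'$.'' But in a tree, two \emph{disjoint} geodesic lines are joined by a unique bridge $[y_0,p_0]$ with $y_0\in A'$ and $p_0\in A$, and every point of $A'$ projects to the single point $p_0$; so no such $y,y'$ exist and the argument as written does not go through. The fix is immediate and in fact simpler than what you wrote: from the bridge description one has $d(y,A)=d(y,p_0)=d(y,y_0)+d(y_0,p_0)$ for every $y\in A'$, which is visibly nonconstant along the line $A'$. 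This contradicts the constancy you already established via the displacement formula, so $A'\cap A\neq\varnothing$, hence the constant is $0$ and $A'=A$. (Alternatively: the displacement formula shows $A$ is exactly the set of minimizers of $x\mapsto d(x,gx)$; since $A'$ consists of minimizers for the translation by $t$, and $t\ge\delta$ with equality forced once any point of $A'$ lies in $A$, one gets $A'\subseteq A$ directly.)

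One further small remark: in the $a=0$ case you conclude $g^{-1}x_0=gx_0$ ``contradicting hyperbolicity.'' It is worth saying one more word here, e.g.\ that then the midpoint of $[x_0,gx_0]$ is fixed by $g$, since $g^2x_0=x_0$ alone does not immediately say $g$ is elliptic without that observation.
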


\begin{prop}\label{dih}
Let $(T,d)$ be a tree, and let $g_1,g_2$ be two elliptic isometries of $T$ with disjoint fixed-point sets. Then $g_1g_2$ is hyperbolic. In particular, if $g_1$ and $g_2$ have order two, then the subgroup $\langle g_1,g_2\rangle$ is an infinite dihedral group.
\end{prop}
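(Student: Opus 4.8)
The plan is to locate the axis of $g_1g_2$ explicitly, using the \emph{bridge} between the two fixed-point sets. First I would record the ambient tree geometry. For an elliptic isometry $h$ of $T$, the set $\mathrm{Fix}(h)$ is a non-empty closed subtree: if $h$ fixes $x$ and $y$, then for $z\in[x,y]$ one has $hz\in[x,y]$ and $d(x,hz)=d(x,z)$, so $hz=z$. Next, any two disjoint non-empty closed subtrees $A,B$ of $T$ are joined by a unique geodesic $[p,q]$ with $p\in A$, $q\in B$, of length $\ell=d(A,B)>0$, meeting $A$ only in $p$ and $B$ only in $q$, and lying inside every geodesic from a point of $A$ to a point of $B$. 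I would apply this with $A=\mathrm{Fix}(g_1)$ and $B=\mathrm{Fix}(g_2)$, obtaining $p_1\in\mathrm{Fix}(g_1)$ and $p_2\in\mathrm{Fix}(g_2)$ with $d(p_1,p_2)=\ell$.

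The heart of the argument is a local computation at $p_2$, supported by a ``no fixed germ'' observation: if an isometry $h$ fixes a point $x$ and fixes the initial direction at $x$ of a geodesic $\sigma$ issuing from $x$ (i.e.\ $h\sigma$ and $\sigma$ share a non-degenerate initial segment), then $h$ pointwise fixes a non-degenerate initial sub-segment of $\sigma$. Since the bridge $[p_1,p_2]$ meets $\mathrm{Fix}(g_1)$ only in $p_1$, it follows that neither $g_1$ nor $g_1^{-1}$ fixes the direction of $[p_1,p_2]$ at $p_1$; likewise $g_2^{-1}$ does not fix the direction of $[p_2,p_1]$ at $p_2$. Writing $g=g_1g_2$, I then read off the two germs at $p_2$. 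On one side, $gp_2=g_1p_2$, and $[p_2,gp_2]=[p_2,p_1]\cup g_1[p_1,p_2]$ with no backtracking at $p_1$ (by the observation applied to $g_1$), so $[p_2,gp_2]$ runs through $p_1$ and leaves $p_2$ in the direction of $[p_2,p_1]$. On the other side, $g^{-1}p_2=g_2^{-1}g_1^{-1}p_2$, and $[p_2,g_1^{-1}p_2]=[p_2,p_1]\cup g_1^{-1}[p_1,p_2]$ again runs through $p_1$; applying the isometry $g_2^{-1}$, which fixes $p_2$, the segment $[p_2,g^{-1}p_2]=g_2^{-1}[p_2,g_1^{-1}p_2]$ leaves $p_2$ in the direction obtained by applying $g_2^{-1}$ to that of $[p_2,p_1]$, which is a \emph{different} direction. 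Hence $[g^{-1}p_2,p_2]$ and $[p_2,gp_2]$ leave $p_2$ in distinct directions, and their concatenation is geodesic.

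By the standard criterion for hyperbolicity of tree isometries, this is exactly what is required: translating $[p_2,gp_2]$ by the powers of $g$ yields segments that meet without backtracking at every $g^np_2$ (apply $g^{-n}$ to reduce to the case $n=0$ just treated), hence glue to a $g$-invariant line on which $g$ acts by translation of length $d(p_2,gp_2)=2\ell>0$; thus $g=g_1g_2$ is hyperbolic, with axis containing the bridge. For the last sentence, suppose $g_1^2=g_2^2=1$. Then $g=g_1g_2$ has infinite order, since a hyperbolic isometry $h$ satisfies $\ell(h^n)=|n|\,\ell(h)>0$ for $n\neq 0$, so $h^n\neq 1$. Moreover $g_1gg_1^{-1}=g_2g_1=g^{-1}$ and similarly $g_2gg_2^{-1}=g^{-1}$, so $\langle g\rangle$ is normal in $\langle g_1,g_2\rangle$, with quotient generated by the common image of $g_1$ and $g_2$ and hence of order dividing $2$; and $g_1\notin\langle g\rangle$, because every non-trivial power of $g$ is hyperbolic while $g_1$ is elliptic and non-trivial. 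Therefore the quotient has order exactly $2$ and $\langle g_1,g_2\rangle\cong\mathbb{Z}\rtimes\mathbb{Z}/2\mathbb{Z}$ is infinite dihedral.

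The only genuine subtlety is that $T$ is an arbitrary (real) tree, not necessarily simplicial, so ``direction at a point'', the no-fixed-germ observation, and the no-backtracking/axis criterion must be formulated in the $\mathbb{R}$-tree setting with a little care; once that vocabulary is fixed, the proof is a routine unwinding of the bridge picture. Alternatively, one can invoke the corresponding statements from a standard reference on group actions on $\mathbb{R}$-trees (or from Serre's book in the simplicial case), after which the proposition follows at once from the hypothesis that $\mathrm{Fix}(g_1)\cap\mathrm{Fix}(g_2)=\emptyset$.
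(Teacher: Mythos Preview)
Your proof is correct and is essentially the standard bridge argument for this well-known fact. Note, however, that the paper does not actually prove this proposition: it is stated in the preliminaries section as a known result about group actions on trees, without proof, alongside other standard facts from Bass--Serre theory (with references to Serre and Bogopolski). So there is no paper proof to compare against; your write-up simply supplies the details that the paper takes for granted.
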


\begin{de}Let $(T,d)$ be a tree. We say that two geodesic rays in $T$ starting at a base-point are equivalent if they remain close to each other, and we define the boundary at infinity $\partial_{\infty}T$ of the tree $T$ as the set of equivalence classes of geodesic rays starting at a base-point.\end{de}

If a group $G$ acts on $T$ by isometries, this action extends to an action on $\partial_{\infty}T$ that turns out to be an extremely useful tool for studying the group $G$. In particular, we will need the following result.

\begin{prop}
Let $(T,d)$ be a tree, let $G$ be a group acting on $T$ by isometries, and let $g\in G$ be a hyperbolic element. Then $g$ fixes exactly two distinct points on $\partial_{\infty}T$ denoted by $g_{-}$ and $g_{+}$. The stabilizer in $G$ of the pair $\lbrace g_{-}, g_{+}\rbrace$ coincides with $\mathrm{Stab}(A(g))$, the setwise stabilizer of the axis of $g$. Moreover, if $T$ is simplicial, then $\mathrm{Stab}(A(g))$ has a clear description:
\begin{itemize}
    \item if there is no element in $G$ swapping $g_{-}$ and $g_{+}$, then $\mathrm{Stab}(A(g))$ is a semi-direct product $S\rtimes \langle g'\rangle$ where $S\subset G$ denotes the set of elements fixing the axis $A(g)$ pointwise and $g'\in G$ is a hyperbolic element such that $A(g')=A(g)$ and $g'$ has minimal translation length.
    \item if there is an element in $G$ swapping $g_{-}$ and $g_{+}$, then $\mathrm{Stab}(A(g))$ has a subgroup $\mathrm{Stab}^{+}(A(g))$ of index two that fits into the first point above.
\end{itemize}
\end{prop}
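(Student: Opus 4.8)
The plan is to treat the three assertions in order, using two standard facts about a tree $T$: (i) any two distinct points of $\partial_\infty T$ are joined by a unique bi-infinite geodesic line; and (ii) for a line $L\subseteq T$ and a point $\xi\in\partial_\infty T$ that is not an end of $L$, the geodesic ray from a point of $L$ towards $\xi$ leaves $L$ at a well-defined point $p_L(\xi)\in L$, and this nearest-point projection is equivariant under any isometry of $T$ preserving $L$. For the first claim I would note that $g$ translates along the line $A(g)$ and hence fixes each of its two (distinct) ends $g_-,g_+$; conversely, if $\xi\in\partial_\infty T\setminus\{g_-,g_+\}$ were fixed by $g$, then by (ii) applied to $L=A(g)$ one would get $p_{A(g)}(\xi)=p_{A(g)}(g\cdot\xi)=g\cdot p_{A(g)}(\xi)$, contradicting that $g$ acts on $A(g)$ as a nontrivial translation. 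For the second claim, an element stabilising $A(g)$ setwise permutes its two ends, hence fixes the pair $\{g_-,g_+\}$; conversely, by (i), an element fixing $\{g_-,g_+\}$ must send the unique line joining them, namely $A(g)$, to itself.

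For the structure statement (with $T$ simplicial), I would look at the permutation action of $\mathrm{Stab}(A(g))$ on the two ends of $A(g)$, that is, the homomorphism $\pi\colon\mathrm{Stab}(A(g))\to\mathrm{Sym}(\{g_-,g_+\})\cong\mathbb Z/2\mathbb Z$, and split into the two cases of the statement according to whether $\pi$ is trivial or surjective. When $\pi$ is trivial, every $h\in\mathrm{Stab}(A(g))$ fixes both ends, hence acts on $A(g)$ as an orientation-preserving isometry --- a translation --- which yields a homomorphism $\phi\colon\mathrm{Stab}(A(g))\to(\mathbb R,+)$ with kernel exactly the set $S$ of elements fixing $A(g)$ pointwise. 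The image of $\phi$ contains $\phi(g)=\pm\ell(g)\neq 0$, and I would argue it is infinite cyclic: a nonzero element of the image is the signed translation length of a hyperbolic isometry of $T$ with axis $A(g)$, and since $T$ is simplicial such translation lengths are bounded away from $0$, so the image is a nontrivial discrete subgroup of $\mathbb R$. Then, choosing $g'$ hyperbolic with $A(g')=A(g)$ of minimal translation length, the image is $\langle\phi(g')\rangle\cong\mathbb Z$; since $\mathbb Z$ is free the sequence $1\to S\to\mathrm{Stab}(A(g))\xrightarrow{\phi}\mathbb Z\to 1$ splits with a section sending a generator to $g'$, giving $\mathrm{Stab}(A(g))=S\rtimes\langle g'\rangle$. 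When $\pi$ is surjective, the subgroup $\mathrm{Stab}^{+}(A(g)):=\ker\pi$ has index $2$, still contains $g$ and $S$, and has all its elements fixing both ends, so the previous case applies to it verbatim.

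I expect the only step with genuine content to be the discreteness of the image of $\phi$, i.e.\ the lower bound on the translation lengths of hyperbolic isometries of $T$ with a fixed axis; this is precisely where simpliciality of $T$ is used (and where one tacitly uses that, for a Bass-Serre tree, $G$ acts by automorphisms of the simplicial structure, so such lengths are positive integers). Everything else reduces to bookkeeping with facts (i) and (ii), together with the elementary observation that a surjection onto $\mathbb Z$ splits.
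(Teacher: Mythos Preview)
Your argument is correct and is the standard one. Note, however, that the paper does not actually prove this proposition: it is stated in the preliminaries section as a well-known fact about group actions on trees, without proof, so there is no approach in the paper to compare against.
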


\section{A class of groups}\label{section_class}

In this section, we introduce a class of groups $\mathcal{C}$ by refining the conditions from Theorem 1.1 in \cite{RT}. Our class $\mathcal{C}$ contains all the sharply 2-transitive groups in characteristic $0$ (see Lemma \ref{contient_tout} below), for instance $\mathrm{AGL}_1(K)=K\rtimes K^{*}$ for any field $K$ of characteristic $0$, and we will prove that $\mathcal{C}$ is remarkably stable under a variety of algebraic operations. This stability will allow us to construct sharply 2-transitive groups with unexpected properties inside the class $\mathcal{C}$.

Let $i$ be an involution (in a group $G$) that will be fixed throughout this paper. If $j,k$ are two involutions distinct from $i$, we say that $j$ and $k$ are equivalent, denoted $j\sim k$, if there exist $m,n\in\mathbb{Z}\setminus\lbrace 0\rbrace$ such that $(ij)^m=(ik)^n$. We say that $j$ is \emph{minimal} if for any involution $k$, if $\langle i,j\rangle\subset \langle i,k\rangle$ then $\langle i,j\rangle = \langle i,k\rangle$. We denote by $A$ the centralizer $C_G(i)$ of $i$ in $G$, and we denote by $\mathrm{Orb}_A(j)$ the orbit of $j$ under the action of $A$ by conjugation.

\begin{de}\label{class}We say that a group $G$ belongs to the class $\mathcal{C}$ if there exist three pairwise distinct involutions $i,j,s\in G$ such that $j=sis$, and if the following conditions hold in $G$, where $A$ denotes the centralizer of the involution $i$.
\begin{enumerate}
    \item All involutions of $G$ are conjugate.
    \item Any two distinct involutions of $G$ generate an infinite dihedral group.
    \item For any $k\in I$, if $k\sim j$ then $k\in \mathrm{Orb}_A(j)$.
    \item $A$ is malnormal in $G$.
    \item For any $k\in I\setminus \lbrace i\rbrace$ such that $k\notin \mathrm{Orb}_A(j)$, there is an involution $k'\in I$ such that the following hold:
    \begin{itemize}
        \item $\langle i,k\rangle\subset \langle i,k'\rangle$ (in particular, $k\sim k'$);
        \item $k'$ is minimal;
        \item $\langle i,k'\rangle$ is quasi-malnormal.
    \end{itemize}
\end{enumerate}
\end{de}

\begin{rque}Our class $\mathcal{C}$ contains all the sharply 2-transitive groups of characteristic 0 (see Lemma \ref{contient_tout} below), and in particular $\mathcal{C}$ contains all groups considered in \cite{RT}. Let us point out the differences between the class $\mathcal{C}$ and the six conditions considered in \cite[Theorem 1.1]{RT}. The fifth condition in \cite{RT} is a strong assumption which implies notably that the centralizer of a translation is abelian and composed only of translations. It is an open problem whether this condition must be satisfied by all sharply 2-transitive groups in characteristic $0$. We replace this condition by the (\emph{a priori} much weaker) assumption that $A$ is malnormal (condition \emph{(4)} above). In addition, we slightly modify the last condition from \cite[Theorem 1.1]{RT} and assume that $\langle i,k'\rangle$ is quasi-malnormal. Last, note that we do not assume that the third condition from \cite[Theorem 1.1]{RT} holds in our class; in fact, this condition follows from the other conditions, as pointed out by Felix Pott in his master's thesis (see Lemma \ref{lemme0} below).
\end{rque}

\begin{rque}Note that conditions \emph{(1)} and \emph{(2)} are preserved under any HNN extension (see Lemma \ref{pres_HNN}).\end{rque}

\begin{lemme}\label{contient_tout}
The class $\mathcal{C}$ contains all the sharply 2-transitive groups of characteristic 0.
\end{lemme}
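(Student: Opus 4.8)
The plan is to verify each of the five conditions in Definition \ref{class} for an arbitrary sharply 2-transitive group $G$ of characteristic $0$. By Lemma \ref{well_known}, we may assume $G$ acts by conjugation on its own set of involutions $I_G$, and that this action is sharply 2-transitive. First I would fix $i\in I_G$ and let $A=C_G(i)$; since involutions fix unique points and the characteristic is $0$, any two distinct involutions generate an infinite dihedral group (this is exactly the definition of characteristic $0$ together with the fact that involutions have fixed points), which immediately gives condition \emph{(2)}, while condition \emph{(1)} is the standard fact that all involutions in a sharply 2-transitive group are conjugate. To produce the involutions $i,j,s$ with $j=sis$: pick any $j\neq i$; since $\mathrm{char}(G)\neq 2$ there is an element swapping the fixed points of $i$ and $j$, and its square is trivial (it fixes the pair), so it is an involution $s$ with $sis=j$, and one checks $s\neq i,j$.

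Next, condition \emph{(4)}: $A=C_G(i)=G_x$ where $x$ is the fixed point of $i$, and by the remark following Lemma \ref{lemme02}, point stabilizers in a sharply 2-transitive group are malnormal. This is where the hypothesis that $G$ is genuinely sharply 2-transitive (not merely satisfying conditions (1)--(2)) is used in an essential way. For condition \emph{(3)}, suppose $k\sim j$, so $(ij)^m=(ik)^n$ for some nonzero integers $m,n$; the element $t=(ij)^m=(ik)^n$ is a nontrivial translation (nontrivial because $ij$ has infinite order in characteristic $0$). One then analyzes the fixed points: a translation $ij$ has a well-defined axis or, in the $I_G$ picture, $i$ and $j$ are both "on the line" determined by $t$. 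I would argue that $k$ lies on the same line through $i$ as $j$, and then use transitivity of $A=C_G(i)$ on $I_G\setminus\{i\}$ (condition (3) of Lemma \ref{lemme02}) to conjugate $j$ to $k$ by an element of $A$ — the point being that an element of $A$ taking $j$ to $k$ must exist since both are involutions $\neq i$, and one must check it is compatible with the equivalence; the subtlety is ruling out that $A$ moves $j$ off the line, which follows because conjugating the relation $(ij)^m=(ik)^n$ by $a\in A$ fixes $i$ and hence keeps everything on lines through $i$, combined with malnormality of $A$ to pin down the element.

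Finally, condition \emph{(5)} is the one I expect to be the main obstacle, since it asks for a minimal, quasi-malnormal over-line through $i$ containing a given $\langle i,k\rangle$. The natural candidate for $k'$ is an involution such that $\langle i,k'\rangle$ is the maximal infinite dihedral subgroup containing $i$ and "pointing in the direction of $k$"; concretely, in $\mathrm{AGL}_1(K)$ this maximal dihedral group is $\{x\mapsto -x+c : c\in V\}\cup\{\text{id}\}$-type object for the appropriate $1$-dimensional $\mathbb{Q}$-subspace (or additive coset structure), and minimality amounts to this subgroup not sitting inside a larger one. Here I would use that in characteristic $0$ the "line" through $i$ and $k$ has a discreteness/divisibility structure coming from the translation subgroup: the set of translations $(ik)^{1/n}$ that can be formed inside $G$ is constrained, so one can choose $k'$ with $\langle i,k'\rangle$ maximal. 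Quasi-malnormality of $\langle i,k'\rangle$ then follows because any conjugate $g\langle i,k'\rangle g^{-1}$ that meets $\langle i,k'\rangle$ in more than $\{1\}$ must share a nontrivial translation; a nontrivial translation has a unique axis (unique pair of fixed points), so $g$ stabilizes that axis, forcing $g$ into $\mathrm{Stab}(A(ik'))$, which by the axis-stabilizer proposition and malnormality of point stabilizers is an infinite dihedral (or metabelian-by-finite) group with the claimed order-$\leq 2$ intersection behavior. The delicate bookkeeping is establishing existence of the maximal $k'$: I would reduce to the affine group via the structure theory (any sharply 2-transitive group of characteristic $0$ with a point stabilizer that is abelian is $\mathrm{AGL}_1(K)$ for a near-field, but we cannot assume abelian), so instead I would argue directly inside $G$ that the family of dihedral groups $\langle i,k_\alpha\rangle$ containing a fixed translation-direction is closed under the relevant unions, again invoking malnormality of $A$ and uniqueness of axes to prevent pathological infinite ascending chains. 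Throughout, I would lean on Proposition \ref{dih} and the axis-stabilizer proposition of Subsection \ref{trees} to translate statements about nested dihedral groups into statements about nested axes in a suitable tree, which makes the minimality argument transparent.
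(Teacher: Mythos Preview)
Your handling of conditions \emph{(1)}, \emph{(2)}, \emph{(4)}, and the existence of the triple $i,j,s$ is fine and matches the paper's proof. The gap is in your treatment of conditions \emph{(3)} and especially \emph{(5)}.

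You are missing the single observation that makes both of these trivial: in a sharply $2$-transitive group of characteristic $0$, the centralizer $A=C_G(i)$ acts \emph{transitively} on $I_G\setminus\{i\}$ (this is exactly condition (3) of Lemma~\ref{lemme02}, combined with Lemma~\ref{well_known} identifying the permutation set with $I_G$). Consequently, \emph{every} involution $k\neq i$ already lies in $\mathrm{Orb}_A(j)$. Condition \emph{(3)} is then immediate with no analysis of lines or axes: if $k\sim j$ then in particular $k\neq i$, hence $k\in\mathrm{Orb}_A(j)$. And condition \emph{(5)} is \emph{vacuous}, because its hypothesis $k\notin\mathrm{Orb}_A(j)$ is never satisfied.

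Your proposed attack on \emph{(5)} --- searching for a maximal dihedral $\langle i,k'\rangle$, invoking discreteness of translation subgroups, worrying about ascending chains, and appealing to tree actions --- is therefore aimed at a non-existent target. Worse, the argument as sketched would not obviously succeed even if it were needed: you never establish that a maximal $k'$ exists (the ``closed under unions'' claim is vague and there is no tree in sight for a general sharply $2$-transitive group), and quasi-malnormality of $\langle i,k'\rangle$ is asserted via an axis-stabilizer proposition that applies to groups acting on trees, not to an arbitrary $G$. The paper's proof avoids all of this by noting the vacuity.
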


\begin{proof}Condition \emph{(2)} is true by definition in any sharply 2-transitive group of characteristic 0. The malnormality of $A$ follows from Lemma \ref{lemme02} (and the remark below this lemma), together with Lemma \ref{well_known}.
The fact that involutions are conjugate, as well as the existence of the three pairwise distinct involutions $i,j,s$ such that $sjs=j$, are well-known (see Subsection \ref{Sharp}). Last, note that conditions \emph{(3)} and \emph{(5)} hold in any sharply 2-transitive group of characteristic 0 since every involution different from $i$ is in the orbit of $j$ under $A$ (again by Lemmas \ref{lemme02} and Lemma \ref{well_known}), and hence condition \emph{(3)} is automatic and condition \emph{(5)} is vacuous.\end{proof}

It is worth noting that, as a first step in the construction of simple sharply 2-transitive groups of characteristic 0, we will prove that every group $G\in \mathcal{C}$ is contained in a sharply 2-transitive group of characteristic 0 (see Theorem \ref{theo4}).

\subsection{Preliminary lemmas}

In this subsection, we prove some easy results about the class $\mathcal{C}$ that will be useful in the rest of the paper.

\begin{lemme}\label{pres_HNN}Conditions \emph{(1)} and \emph{(2)} from Definition \ref{class} are preserved under any HNN extension.\end{lemme}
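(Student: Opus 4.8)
The plan is to run everything through the action of the HNN extension on its Bass--Serre tree. Write $G^{*}=\langle G,t\mid tAt^{-1}=B\rangle$, where $A,B\leq G$ are the associated subgroups, and let $T$ be the Bass--Serre tree of this splitting, on which $G^{*}$ acts by isometries (Subsection \ref{trees}); recall that $T$ is simplicial and that in the case of an HNN extension there is a single orbit of vertices, whose stabilizers are exactly the conjugates of $G$. The one standard fact I will use is that every element of finite order in $G^{*}$ acts elliptically on $T$: a hyperbolic isometry has strictly positive translation length, hence infinite order, so any torsion element must be elliptic and therefore fixes a vertex (see \cite{Serre}). In particular every involution of $G^{*}$ lies in some conjugate $gGg^{-1}$ of $G$, and hence is $G^{*}$-conjugate to an involution of $G$.

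Condition \emph{(1)} is then immediate: all involutions of $G$ are conjugate in $G$, a fortiori in $G^{*}$, and every involution of $G^{*}$ is $G^{*}$-conjugate to one of these by the previous paragraph, so all involutions of $G^{*}$ are conjugate. For condition \emph{(2)}, let $u\neq v$ be involutions of $G^{*}$. Both are elliptic, so each has a nonempty fixed-point subtree in $T$. If these two subtrees are disjoint, then Proposition \ref{dih} applies directly and gives that $\langle u,v\rangle$ is an infinite dihedral group. If they meet, then $u$ and $v$ share a fixed vertex $x$, so they both lie in the stabilizer $G_{x}=gGg^{-1}$ for some $g\in G^{*}$; then $g^{-1}ug$ and $g^{-1}vg$ are distinct involutions of $G$, which generate an infinite dihedral group by condition \emph{(2)} for $G$, and conjugating by $g$ shows the same for $\langle u,v\rangle$. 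The case of an HNN extension with several stable letters is identical: the only features used are that $G^{*}$ acts on a simplicial tree whose vertex stabilizers are the conjugates of $G$, and that torsion elements are elliptic.

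There is no genuine obstacle here; the only point to be slightly careful about is the dichotomy on the fixed-point subtrees of $u$ and $v$, but that is precisely what Proposition \ref{dih} is designed to handle in the disjoint case, while the non-disjoint case reduces cleanly to the hypothesis on $G$. The role of this lemma is simply to record that conditions \emph{(1)} and \emph{(2)}, unlike the remaining conditions in Definition \ref{class}, survive an HNN extension with no assumption whatsoever on the associated subgroups; the substantive work will be to show that conditions \emph{(3)}, \emph{(4)} and \emph{(5)} can also be maintained under the carefully chosen HNN extensions used in the construction.
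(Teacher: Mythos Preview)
Your proof is correct and follows essentially the same approach as the paper's: both use the action on the Bass--Serre tree, observe that involutions are elliptic and hence lie in conjugates of $G$ (giving condition \emph{(1)}), and for condition \emph{(2)} split into the case where the two involutions share a fixed vertex (reducing to $G$) versus the case where their fixed-point sets are disjoint (invoking Proposition \ref{dih}). Your exposition is slightly more expansive, but the argument is identical.
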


\begin{proof}Let $G$ be a group in the class $\mathcal{C}$, and let $G_1$ be an HNN extension of $G$. Let us prove that $G_1$ satisfies the conditions \emph{(1)} and \emph{(2)}.

Condition \emph{(1)}: let $k\in G_1$ be an involution. Since $k$ has finite order, it is elliptic in the Bass-Serre tree of the splitting of $G_1$ as an HNN extension. Therefore, there is an element $g\in G_1$ such that $k$ belongs to $gGg^{-1}$. It follows that $g^{-1}kg$ is conjugate to $i$, and thus $k$ is conjugate to $i$.

Condition \emph{(2)}: let $k,\ell\in G_1$ be two involutions. As above, $k$ and $\ell$ are elliptic in the Bass-Serre tree. If they fix a common vertex, then there exists $g\in G_1$ such that $k$ and $\ell$ belong to $gGg^{-1}$. Since $G$ satisfies condition \emph{(2)}, we see that $g^{-1}kg$ and $g^{-1}\ell g$ generate an infinite dihedral group, and thus $k$ and $\ell$ generate an infinite dihedral group. Then, if $k$ and $\ell$ do not fix a common vertex in the Bass-Serre tree, Lemma \ref{dih} tells us that they generate an infinite dihedral group.
\end{proof}

\begin{lemme}\label{pres_amalgam}Let $G,G'$ be two groups in $\mathcal{C}$. Let $i$ and $i'$ be two involutions in $G$ and $G'$ respectively. Then the amalgamated free product $G_1=G\ast_{i=i'}G'$ satisfies the conditions \emph{(1)} and \emph{(2)}.\end{lemme}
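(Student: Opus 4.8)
The plan is to mimic the proof of Lemma \ref{pres_HNN}, replacing the Bass-Serre tree of the HNN extension by the Bass-Serre tree $T$ of the amalgamated product $G_1=G\ast_{i=i'}G'$. Recall that the vertices of $T$ are the cosets $gG$ and $gG'$, and every finite-order element of $G_1$ is elliptic, i.e.\ fixes a vertex, hence is conjugate into $G$ or into $G'$. The subtlety compared with the HNN case is that there are now two vertex orbits, and we must make sure that an involution conjugate into $G'$ is still conjugate to $i$; this is where the hypothesis $i=i'$ is used, and also why we need to know that \emph{all} involutions of $G$ (resp.\ $G'$) are conjugate to $i$ (resp.\ to $i'$), which is condition \emph{(1)} for $G$ and $G'$.

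For condition \emph{(1)}: let $k\in G_1$ be an involution. Since $k$ has finite order it is elliptic in $T$, so it fixes some vertex, i.e.\ there is $g\in G_1$ with $k\in gGg^{-1}$ or $k\in gG'g^{-1}$. In the first case $g^{-1}kg$ is an involution of $G$, hence conjugate in $G$ to $i$ by condition \emph{(1)} for $G$; in the second case $g^{-1}kg$ is an involution of $G'$, hence conjugate in $G'$ to $i'=i$ by condition \emph{(1)} for $G'$. Either way $k$ is conjugate to $i$ in $G_1$.

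For condition \emph{(2)}: let $k,\ell\in G_1$ be two distinct involutions, both elliptic in $T$. If their fixed-point sets in $T$ are disjoint, then Proposition \ref{dih} immediately gives that $\langle k,\ell\rangle$ is infinite dihedral. Otherwise they fix a common vertex $v$; up to conjugation we may assume $v=G$ or $v=G'$, so $k,\ell$ both lie in a conjugate of $G$ or of $G'$, say $k,\ell\in gGg^{-1}$. Then $g^{-1}kg$ and $g^{-1}\ell g$ are distinct involutions of $G$, so by condition \emph{(2)} for $G$ they generate an infinite dihedral group, and hence so do $k$ and $\ell$; the case $k,\ell\in gG'g^{-1}$ is identical using condition \emph{(2)} for $G'$. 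This proves that $G_1$ satisfies conditions \emph{(1)} and \emph{(2)}.

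The only genuinely non-routine point is the reduction ``common fixed point $\Rightarrow$ simultaneously conjugate into a single factor,'' which uses the standard fact from Bass-Serre theory that the stabilizer of a vertex of $T$ is a conjugate of $G$ or of $G'$; given this, there is no real obstacle, and the argument is strictly parallel to Lemma \ref{pres_HNN}. I would also remark in passing that, unlike conditions \emph{(1)} and \emph{(2)}, conditions \emph{(3)}, \emph{(4)} and \emph{(5)} are \emph{not} automatically inherited by amalgams and will require the more delicate analysis carried out later in the paper, which is precisely why this lemma is stated only for the first two conditions.
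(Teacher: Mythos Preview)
Your proof is correct and follows essentially the same argument as the paper: ellipticity of finite-order elements in the Bass-Serre tree gives conjugacy into a factor, condition \emph{(1)} in each factor together with $i=i'$ handles \emph{(1)}, and for \emph{(2)} one splits according to whether the two involutions share a fixed vertex, invoking Proposition~\ref{dih} when they do not. Your case analysis (disjoint fixed-point sets versus a common fixed vertex) is in fact slightly more carefully worded than the paper's, but the content is the same.
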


\begin{proof}Condition \emph{(1)}: any involution in $G_1$ is elliptic in the Bass-Serre tree of the splitting of $G_1$ as an amalgamated free product, which means that it belongs to a conjugate of $G$ or $G'$. Now, observe that all involutions in $G$ are conjugate to $i$, all involutions in $G'$ are conjugate to $i'$, and $i=i'$. 

Condition \emph{(2)}: let $k,\ell$ be two involutions in $G_1$. They are elliptic in the Bass-Serre tree. If they belong to the same vertex group, they generate an infinite dihedral group by condition \emph{(2)} in $G$ and $G'$. If they fix different vertices, they also generate an infinite dihedral group by Lemma \ref{dih}.
\end{proof}

The following lemma can be proved in a similar way.

\begin{lemme}\label{pres_amalgam2}Let $G$ be a group in $\mathcal{C}$, and let $G'$ be a group. Let $i$ and $i'$ be two involutions in $G$ and $G'$ respectively. Suppose that $i'$ is the only involution in $G'$. Then the amalgamated free product $G_1=G\ast_{i=i'}G'$ satisfies the conditions \emph{(1)} and \emph{(2)}.\end{lemme}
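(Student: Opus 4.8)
The plan is to mimic the proof of Lemma \ref{pres_amalgam}, the only new ingredient being the use of the hypothesis that $i'$ is the unique involution of $G'$. Throughout, let $T$ denote the Bass-Serre tree of the splitting $G_1=G\ast_{i=i'}G'$, so that the vertex stabilizers are exactly the conjugates of $G$ and of $G'$. The key general fact we rely on is that any element of finite order in $G_1$ is elliptic on $T$, hence fixes a vertex and therefore lies in a conjugate of $G$ or of $G'$; in particular this applies to every involution of $G_1$.

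For condition \emph{(1)}, let $k\in G_1$ be an involution. Then $k$ lies in $gGg^{-1}$ or in $gG'g^{-1}$ for some $g\in G_1$. In the first case $g^{-1}kg$ is an involution of $G$, hence conjugate to $i$ in $G$ by condition \emph{(1)} for $G$; in the second case $g^{-1}kg$ is an involution of $G'$, hence equal to $i'=i$ since $i'$ is the only involution of $G'$. Either way $k$ is conjugate to $i$ in $G_1$, so all involutions of $G_1$ are conjugate.

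For condition \emph{(2)}, let $k\neq\ell$ be two distinct involutions of $G_1$; both are elliptic on $T$. If they do not fix a common vertex, then by Lemma \ref{dih} the subgroup $\langle k,\ell\rangle$ is infinite dihedral and we are done. If they do fix a common vertex $v$, then $k,\ell\in\mathrm{Stab}(v)=gHg^{-1}$ for some $g\in G_1$ and some $H\in\{G,G'\}$. The case $H=G'$ cannot occur: it would force $g^{-1}kg$ and $g^{-1}\ell g$ to both be involutions of $G'$, hence both equal to $i'$, contradicting $k\neq\ell$. Thus $H=G$, and $g^{-1}kg,g^{-1}\ell g$ are two distinct involutions of $G$, which generate an infinite dihedral group by condition \emph{(2)} for $G$; conjugating back, $\langle k,\ell\rangle$ is infinite dihedral as well.

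The only point requiring any care is the sub-case where two distinct involutions share a fixed vertex whose stabilizer is a conjugate of $G'$; the uniqueness of the involution in $G'$ rules this out, and no further work is needed. The remaining steps are verbatim those of Lemma \ref{pres_amalgam}.
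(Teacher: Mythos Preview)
Your proof is correct and follows exactly the approach the paper intends: the paper does not spell out a proof for this lemma but simply states that it ``can be proved in a similar way'' to Lemma~\ref{pres_amalgam}, and your argument is precisely that adaptation, with the uniqueness of $i'$ in $G'$ handling both the conjugacy of involutions and the elimination of the $H=G'$ sub-case.
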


\begin{lemme}\label{lemma1}Let $G$ be a group in $\mathcal{C}$. The following two assertions are equivalent.
\begin{enumerate}
    \item[(i)] $A$ is malnormal in $G$.
    \item[(ii)] For every involution $k\neq i$, one has: \[A\cap C_{G}(k)=\lbrace 1\rbrace.\]In other words, the centralizer of the pair $(i,k)$ is trivial.
\end{enumerate}
\end{lemme}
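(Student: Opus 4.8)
The plan is to prove the equivalence of (i) and (ii) in Lemma \ref{lemma1} directly from the definition of malnormality, using the fact (already recorded in the preliminaries) that in a group satisfying condition \emph{(2)} of Definition \ref{class}, two distinct involutions $i,k$ generate an infinite dihedral group, so the centralizer of such a pair is closely tied to a specific involution $s$ with $sis = k$.

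\smallskip

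\noindent\textbf{Direction (i) $\Rightarrow$ (ii).} Suppose $A = C_G(i)$ is malnormal. Let $k \neq i$ be an involution and let $g \in A \cap C_G(k)$. Since $k \neq i$, by condition \emph{(2)} the subgroup $\langle i, k\rangle$ is infinite dihedral, so there is an involution $s$ with $sis = k$ (namely any reflection swapping the two ``ends'', e.g.\ $s = $ an appropriate power times a reflection); then $s \notin A$, because $sis = k \neq i$. Now $g$ centralizes both $i$ and $k = sis$, hence $g \in A \cap sAs^{-1}$: indeed $g \in A$ by hypothesis, and $s^{-1}gs$ centralizes $s^{-1}is = $ (the involution conjugate back) — more cleanly, since $g$ fixes $k$ and $k = sis$, the element $s^{-1}gs$ centralizes $i$, so $g \in sAs^{-1}$. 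Malnormality of $A$ with $s \notin A$ forces $A \cap sAs^{-1} = \{1\}$, so $g = 1$. This gives $A \cap C_G(k) = \{1\}$.

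\smallskip

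\noindent\textbf{Direction (ii) $\Rightarrow$ (i).} Conversely assume $A \cap C_G(k) = \{1\}$ for every involution $k \neq i$. Let $g \in G \setminus A$; we must show $A \cap gAg^{-1} = \{1\}$. Take $h \in A \cap gAg^{-1}$, so $h$ centralizes $i$ and $h$ centralizes $gig^{-1}$. Set $k = gig^{-1}$, which is an involution (and $k \neq i$ since $g \notin A = C_G(i)$). Then $h \in A \cap C_G(k) = \{1\}$, so $h = 1$, proving $A$ is malnormal.

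\smallskip

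\noindent\textbf{Main obstacle.} The only subtle point is the existence, for any involution $k \neq i$, of an element conjugating $i$ to $k$ inside a witness configuration — but this is immediate: in the infinite dihedral group $\langle i, k\rangle$ the two involutions $i$ and $k$ are conjugate (conjugate by $ik$ an appropriate number of times, or directly: if $\langle i,k\rangle = \langle r\rangle \rtimes \langle i\rangle$ with $r = ik$, then $k = r i r^{-1}\cdot$(correction), and in any case $i$ and $k$ lie in the same conjugacy class of the infinite dihedral group when... ) Actually for direction (i)$\Rightarrow$(ii) one does not even need this; it suffices to observe directly that if $g$ centralizes $k$ then, picking \emph{any} element $t\in G$ with $tit^{-1}=k$ (which exists since all involutions are conjugate by condition \emph{(1)}), we get $t^{-1}gt\in A$ and hence $g\in tAt^{-1}$, while $t\notin A$ because $tit^{-1}=k\neq i$; then malnormality finishes. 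So condition \emph{(1)} of Definition \ref{class} (all involutions conjugate) is what makes the argument go through cleanly, and there is no real obstacle — the lemma is a routine unpacking of definitions.
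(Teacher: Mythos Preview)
Your argument is ultimately correct, but the presentation is muddled and your first attempt at (i)$\Rightarrow$(ii) contains an actual error: in the infinite dihedral group $\langle i,k\rangle$ the involutions $i$ and $k$ are \emph{not} in general conjugate (there are two conjugacy classes of reflections in $D_\infty$, and $i,k$ lie in different ones), so no involution $s\in\langle i,k\rangle$ satisfies $sis=k$. You seem to realize something is off and salvage the argument in your ``Main obstacle'' paragraph by invoking condition \emph{(1)} of Definition~\ref{class} to produce some $t\in G$ with $tit^{-1}=k$; that repaired argument is fine. Your (ii)$\Rightarrow$(i) direction matches the paper's.

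For (i)$\Rightarrow$(ii) the paper takes a shorter route that avoids condition \emph{(1)} entirely: if $h\in A\cap C_G(k)$ then $h=khk^{-1}$, so $h\in A\cap kAk^{-1}$; and $k\notin A$ because condition \emph{(2)} forces $i$ to be the unique involution in $A=C_G(i)$. Thus the conjugator is simply $k$ itself. This is slightly more economical (it uses only condition \emph{(2)}, not \emph{(1)}), whereas your corrected argument uses the transitivity on involutions. Both are valid given the standing hypothesis $G\in\mathcal{C}$; you should just delete the failed dihedral-conjugacy attempt and present the clean version directly.
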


\begin{proof}Suppose that \emph{(ii)} holds, and prove that $A=C_G(i)$ is malnormal in $G$. Let $g\in G\setminus A$ and let $h\in gAg^{-1}\cap A$. Since $gAg^{-1}=C_{G}(gjg^{-1})$, the element $h$ centralizes both $k=gig^{-1}$ and $i$. Moreover, note that $k$ is different from $i$ since $g$ does not centralize $i$. Hence $h=1$.

Conversely, suppose that $A$ is malnormal in $G$, and prove that the assertion \emph{(ii)} holds. Let $k\neq i$ be an involution, and let $h\in A\cap C_{G}(k)$. Note that $h=khk^{-1}$ belongs to $A\cap kAk^{-1}$, which is trivial since $A$ is malnormal and $k$ does not belong to $A$ (indeed, $k\neq i$ and $i$ is the only involution in $A$, because two distinct involutions generate an infinite dihedral group).\end{proof}

\begin{lemme}\label{lemme0}Let $G$ be a group in $\mathcal{C}$. Then for any $n > 0$ there exists a unique involution $k\in G$ such that $(ik)^n=ij$.
\end{lemme}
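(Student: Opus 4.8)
The plan is to realise the required involution as a conjugate of $j$ by an element of $A=C_G(i)$, using condition \emph{(3)} of Definition \ref{class} to produce the conjugating element, and to obtain uniqueness from the malnormality of $A$ via Lemma \ref{lemma1}. I would start with the elementary computation underlying everything. By condition \emph{(2)} the element $ij$ has infinite order, and since $i(ij)i^{-1}=ji=(ij)^{-1}$, conjugation by $i$ inverts every power of $ij$. Hence, for every $n>0$, the element $\ell_n:=i(ij)^n$ satisfies $\ell_n^2=\bigl(i(ij)^n i\bigr)(ij)^n=(ij)^{-n}(ij)^n=1$; moreover $\ell_n\neq 1$ (otherwise $i=(ij)^{-n}$ would have infinite order) and $\ell_n\neq i$ (otherwise $(ij)^n=1$), so $\ell_n$ is an involution distinct from $i$ with $i\ell_n=(ij)^n$ (note $\ell_1=j$). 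This $\ell_n$ is the crucial object.

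\textbf{Existence.} Since $(i\ell_n)^1=(ij)^n$ with $n\neq 0$, we have $\ell_n\sim j$, so condition \emph{(3)} gives an element $a\in A$ with $\ell_n=aja^{-1}$. As $a$ centralises $i$, this yields $(ij)^n=i\ell_n=a(ij)a^{-1}$, hence $a^{-1}(ij)^n a=ij$. Setting $k:=a^{-1}ja$ we obtain an involution with $k\neq i$ (otherwise $j=aia^{-1}=i$) and, since $a^{-1}$ also centralises $i$, with $ik=a^{-1}(ij)a$; therefore $(ik)^n=a^{-1}(ij)^n a=ij$, as required.

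\textbf{Uniqueness.} Suppose $k,k'$ are involutions with $(ik)^n=(ik')^n=ij$. Neither is $i$ (since $ij\neq 1$), and $(ik)^n=(ij)^1$ forces $k\sim j$, so by condition \emph{(3)} we may write $k=aja^{-1}$ and $k'=a'ja'^{-1}$ with $a,a'\in A$. As in the existence argument $(ik)^n=a(ij)^n a^{-1}$ and $(ik')^n=a'(ij)^n a'^{-1}$, so $c:=a'^{-1}a$ centralises $(ij)^n$; being a product of elements of $A$ it also centralises $i$, and hence it centralises $\ell_n=i(ij)^n$. Since $\ell_n$ is an involution distinct from $i$, Lemma \ref{lemma1} together with condition \emph{(4)} gives $A\cap C_G(\ell_n)=\{1\}$, so $c=1$, whence $a=a'$ and $k=k'$.

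I do not expect a genuine obstacle here: the whole argument is short once one notices the involution $\ell_n=i(ij)^n$, and the only real "step" is observing that $\ell_n$ lies in the $\sim$-class of $j$ so that condition \emph{(3)} applies. In effect, essentially all of the content of the statement has been absorbed into condition \emph{(3)} of the class $\mathcal{C}$ (and the malnormality in condition \emph{(4)}); the difficult work in the paper is verifying that these conditions hold in the relevant groups and are preserved under the extensions used, not deducing the present lemma from the axioms.
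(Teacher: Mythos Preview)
Your proof is correct and follows essentially the same route as the paper: both arguments observe that $\ell_n=i(ij)^n$ is an involution equivalent to $j$, invoke condition \emph{(3)} to conjugate it to $j$ by some $a\in A$, and take $k=a^{-1}ja$; uniqueness in both cases boils down to showing that a certain element of $A$ centralises a second involution and is therefore trivial by Lemma \ref{lemma1}. Your write-up is in fact a bit more careful (you explicitly verify that $\ell_n$ is an involution distinct from $i$, and you set $k=a^{-1}ja$ rather than $aja^{-1}$, avoiding what appears to be a typo in the paper).
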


\begin{proof}Observe that $i(ij)^n\sim j$. By the third condition in the definition of the class $\mathcal{C}$, there is an element $a\in A$ such that $i(ij)^n=aja^{-1}$, and therefore $(ik)^n=ij$ with $k=aja^{-1}$. To see that $k$ is unique, observe that for any involution $k'$ such that $(ik)^n=(ik')^n=ij$, there is an element $a\in A$ such that $k'=aka^{-1}$. It follows that $a i(ik)^n a^{-1}=i(ik)^n$. Hence $a$ centralizes two distinct involutions, namely $i(ik)^n$ and $i$, and thus $a=1$ (by Lemma \ref{lemma1} above) and $k=k'$.
\end{proof}

\begin{lemme}\label{lemme_translation}Let $G$ be a group in $\mathcal{C}$. Then $A$ contains no translation.
\end{lemme}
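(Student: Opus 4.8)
The plan is to argue by contradiction. Suppose some translation $t=k\ell$ (with $k\neq \ell$ involutions) lies in $A=C_G(i)$. Since $t$ is a non-trivial product of two involutions and any two distinct involutions generate an infinite dihedral group (condition \emph{(2)}), the element $t$ has infinite order, and in $\langle k,\ell\rangle\cong D_\infty$ the two involutions $k,\ell$ both invert $t$, i.e.\ $ktk^{-1}=t^{-1}$. The key point is to exploit the fact that $i$ centralizes $t$: conjugating the relation $t\in C_G(i)$ I get that $i$ normalizes $\langle t\rangle$, and since $i$ has order two and the only order-two automorphism of $\langle t\rangle\cong\mathbb Z$ sending $t$ to $t^{\pm1}$, either $iti^{-1}=t$ or $iti^{-1}=t^{-1}$.

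First I would dispose of the case $iti^{-1}=t^{-1}$. In that case $it$ is an involution (indeed $(it)^2=it\cdot it=i\,t\,i\,t=t^{-1}t=1$, using $iti=t^{-1}$), and $it\neq i$ since $t\neq 1$; moreover $i(it)=t\neq 1$, so $i$ and $it$ are distinct involutions whose product is the infinite-order element $t$, consistent so far. But now both $i$ and $k$ invert $t$: we have $iti^{-1}=t^{-1}=ktk^{-1}$, so $k^{-1}i$ centralizes $t$. If $k^{-1}i\neq 1$ then $k\neq i$, and $k^{-1}i=ki$ is a non-trivial element centralizing $t$; since $t$ has infinite order and lies in $A$, I want to contradict malnormality of $A$ (condition \emph{(4)}) via Lemma \ref{lemma1}. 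Concretely, $t\in A\cap C_G(i)$ already, and I also have $t$ centralized by the involution $ki$, hence (after checking $ki$ is genuinely an involution distinct from $i$, which holds as $k\neq i$ and $k,i$ generate $D_\infty$ so $ki$ has infinite order — wait, this needs care). The cleaner route: from $iti^{-1}=t^{-1}$ we get that $i$ inverts $t$, but $t\in A$ means $i$ centralizes $t$, so $t=t^{-1}$, forcing $t^2=1$, contradicting that $t$ has infinite order. This is the crux and it is immediate once we are in this case.

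So we are left with $iti^{-1}=t$, which is automatic since $t\in A$ — in other words the dichotomy above contributes nothing new and the real content is elsewhere. Let me restart the argument along the lines the paper presumably intends: write $t=k\ell$ with $k\neq\ell$, so $\langle k,\ell\rangle$ is infinite dihedral with $k\ell k=\ell$ inverting $t$; wait — instead I would use that $t\in A$ together with Lemma \ref{lemme0}. By Lemma \ref{lemme0} (applied with $n=1$, or by condition \emph{(3)} directly), every involution distinct from $i$ that is equivalent to $j$ lies in $\mathrm{Orb}_A(j)$; the relevant consequence is Lemma \ref{lemma1}, that no non-trivial element centralizes two distinct involutions. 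Now if $t=k\ell\in A$, consider the involution $k$: since $t\in A$, the element $t$ centralizes $i$, and $k t k^{-1}=t^{-1}$, so $ktk^{-1}\in A$ as well, whence $k$ conjugates $A$ in a way that $t\mapsto t^{-1}$ stays in $A$. Then $k^{-1}ik$ and $i$ are both centralized by $t^{-1}=ktk^{-1}$... The hard part, and the step I expect to be the genuine obstacle, is precisely pinning down \emph{which} pair of distinct involutions ends up being centralized by a common non-trivial element: one must leverage that the two involutions $k,\ell$ defining $t$, together with $i$, live in a common infinite dihedral structure only pairwise, and produce from $t\in C_G(i)$ an element centralizing both $i$ and one of $k,\ell$ (or both $k$ and $\ell$), then invoke malnormality of $A$ via Lemma \ref{lemma1}. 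Once that configuration is identified the contradiction is one line; identifying it correctly — avoiding the degenerate sub-cases where the ostensibly "new" element is trivial — is where the care is needed.
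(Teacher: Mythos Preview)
Your proposal is not wrong but it is left incomplete, and you misidentify where the difficulty lies. After your meandering, the line ``$k^{-1}ik$ and $i$ are both centralized by $t^{-1}=ktk^{-1}$'' is exactly the right observation, and it \emph{is} the paper's argument (with $a,b$ in place of your $k,\ell$): since $t\in A$ centralizes $i$ and $k$ inverts $t$, conjugating by $k$ shows $t^{-1}$ (hence $t$) centralizes $kik$; so $t$ centralizes the pair $(i,\,kik)$. What you flag as ``the hard part'' --- checking that this pair is genuinely a pair of \emph{distinct} involutions --- is a two-line verification, not an obstacle.

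Concretely: first $k\neq i$, for if $k=i$ then $t=i\ell\in A$ gives $i(i\ell)i=i\ell$, i.e.\ $\ell i=i\ell$, contradicting condition \emph{(2)} since $\ell\neq i$. Hence by condition \emph{(2)} the group $\langle i,k\rangle$ is infinite dihedral, so $kik\neq i$. Now Lemma~\ref{lemma1} (equivalently malnormality of $A$) forces $t=1$, the desired contradiction. That is the entire proof; the paper phrases it by noting that both $k$ and $\ell$ swap $i$ and $kik$, whence $t=k\ell$ fixes both.

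Your earlier detours (the dichotomy on $iti^{-1}$, the appeal to Lemma~\ref{lemme0}) are dead ends, as you yourself observed. Strip those out, keep the last paragraph, and finish the verification above.
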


\begin{proof}Suppose towards a contradiction that there exists a translation $t=ab$ in $A$, where $a,b$ denote two distinct involutions. First, observe that $b$ and $a$ are both different from $i$. Indeed, if $b=i$ then $tit=aia=i$, which is not possible since the group generated by $i$ and $a$ (with $a\neq b=i$) is an infinite dihedral group by condition \emph{(2)} in the definition of the class $\mathcal{C}$. Hence $b$ and $i$ are different, and similarly $a$ and $i$ are different. Then, define $k=bib$. Note that $k$ is an involution, and that $k$ is different from $i$ since $i$ and $b$ generate an infinite dihedral group (as $i\neq b$). Since $b$ has order 2, it swaps $i$ and $k$. But $aka=abiba=tit=i$ as $t$ belongs to $A$, and thus $a$ swaps $i$ and $k$ as well. It follows that $t$ centralizes both $i$ and $k$. But Lemma \ref{lemma1} tells us that no nontrivial element can centralize the pair $(i,k)$. Therefore $t=1$, contradicting our assumption.\end{proof}

\section{Stability of $\mathcal{C}$ under HNN extensions and amalgamations}

We shall prove that $\mathcal{C}$ is stable under certain HNN extensions and amalgamated free products. For the convenience of the reader, the results are stated here, and the proofs are given in Section \ref{proofs}.

\subsection{Stability under HNN extensions}~\ \smallskip

The following proposition is the analogue of Proposition 1.4 in \cite{RT}.

\begin{prop}[see Proposition \ref{jesaispas0b}]\label{jesaispas0}
Let $G$ be a group in $\mathcal{C}$. Let $k\in G$ be an involution distinct from $i$. Suppose that $k$ does not belong to $\mathrm{Orb}_A(j)$ and that $k$ is minimal. Define \[G_1=\langle G,t \ \vert \ tit^{-1}=i, \ tjt^{-1}=k\rangle\] Then $G_1$ belongs to $\mathcal{C}$.
\end{prop}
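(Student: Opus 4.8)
The plan is to verify each of the five conditions in Definition \ref{class} for the HNN extension $G_1=\langle G,t\mid tit^{-1}=i,\ tjt^{-1}=k\rangle$, using the Bass--Serre tree $T$ of this splitting as the main geometric tool. Note first that the HNN extension is well-defined: since $k\notin A$ and $i\neq k$, the subgroups $\langle i,j\rangle$ and $\langle i,k\rangle$ are both infinite dihedral, and the assignment $i\mapsto i$, $j\mapsto k$ extends to an isomorphism of the associated (infinite cyclic on $ij$, with $i$) subgroups; more precisely one conjugates the infinite dihedral group $\langle i,j\rangle$ onto $\langle i,k\rangle$ fixing $i$, which is possible exactly because $\langle i,j\rangle\cong D_\infty\cong\langle i,k\rangle$ both contain $i$ as a ``reflection''. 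So the associated edge subgroup is $\langle i,j\rangle\cong\langle i,k\rangle$.

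Conditions \emph{(1)} and \emph{(2)} are immediate from Lemma \ref{pres_HNN}. For condition \emph{(4)}, that $A_1=C_{G_1}(i)$ is malnormal, I would use Lemma \ref{lemma1}: it suffices to show that no nontrivial element of $G_1$ centralizes a pair $(i,\ell)$ of distinct involutions. Here the key observation is that $t$ centralizes $i$, so $A_1$ strictly contains $A=C_G(i)$; in fact $A_1$ should be an HNN extension (or at least act on the subtree $T^i$ of points fixed by $i$) with vertex group $A$ and stable letter $t$, the associated subgroups being $\langle ij\rangle$ and $\langle ik\rangle$ (the cyclic subgroups of translations in the two dihedral groups that are centralized by... no --- rather, $t$ conjugates $C_G(j)\cap A$-type data). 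I would argue: an element centralizing $i$ and another involution $\ell$ is elliptic in $T$ fixing a vertex; conjugating into $G$, it centralizes $i$ and a $G$-involution there, forcing it to be trivial by malnormality of $A$ in $G$ (Lemma \ref{lemma1}); the case where the two involutions do not share a fixed vertex is handled because their product would be hyperbolic, contradicting Proposition \ref{dih} together with condition \emph{(2)} in $G_1$ — but both are involutions so $\langle i,\ell\rangle$ is dihedral and one still gets a centralizer constraint. The genuinely delicate point is analyzing centralizers of pairs where one element is $t$-conjugate of something in $G$; this is where the minimality of $k$ and the quasi-malnormality hypotheses from condition \emph{(5)} of $G\in\mathcal{C}$ enter, exactly as in the motivating ``bad example'' discussed in the introduction.

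For condition \emph{(3)}, I need: if $\ell\sim j$ in $G_1$ then $\ell\in\mathrm{Orb}_{A_1}(j)$. The new phenomenon is that $k$ is now in $\mathrm{Orb}_{A_1}(j)$ (since $tjt^{-1}=k$ and $t\in A_1$), and more generally any involution that becomes ``equivalent to $j$'' after the extension must be tracked. I would use the tree: if $\ell\sim j$, i.e. $(ij)^m=(i\ell)^n$, then $\ell$ and $j$ lie in a common infinite dihedral group $\langle i,j'\rangle$ for suitable $j'$, which is elliptic hence (up to conjugacy) lands in a vertex group; pulling back into $G$ and using condition \emph{(3)} in $G$ plus the fact that $t$ realizes $j\mapsto k$, one chases the orbit. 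Condition \emph{(5)} is the analogue of the same condition in \cite{RT} and I expect its verification to follow the structure of the proof of Proposition 1.4 there: given $\ell\neq i$ with $\ell\notin\mathrm{Orb}_{A_1}(j)$, one must produce a minimal $\ell'$ with $\langle i,\ell\rangle\subset\langle i,\ell'\rangle$ and $\langle i,\ell'\rangle$ quasi-malnormal; the elliptic elements can be pushed to a vertex group and handled by condition \emph{(5)} in $G$, while hyperbolic-axis considerations (via the Proposition on $\mathrm{Stab}(A(g))$ for simplicial trees) control the new dihedral subgroups created by $t$.

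The main obstacle, as the introduction itself signals, is condition \emph{(4)} (malnormality of $A_1$): one must rule out that the stable letter $t$ creates a nontrivial element centralizing a pair of involutions, and the subtle case is precisely when $\langle i,k\rangle$ sits inside a larger dihedral group — this is why the hypothesis requires $k$ to be \emph{minimal}. I expect the proof to establish first a clean structural description of $A_1=C_{G_1}(i)$ as the fundamental group of a graph of groups (an HNN extension of $A$, possibly with several stable letters accounting for $t$ and its conjugates that still commute with $i$), and then read malnormality off the action of $A_1$ on the subtree $T^i\subset T$ fixed by $i$, using that edge stabilizers there are the cyclic groups $\langle ij\rangle,\langle ik\rangle$ which are malnormal in $A$ (this uses Lemma \ref{lemme_translation}, that $A$ contains no translation, to see the intersections are as small as claimed) together with the quasi-malnormality in condition \emph{(5)}. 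The quasi-malnormality (intersection of order $\leq 2$ rather than trivial) is what forces the ``quasi-'' in some intermediate statements, absorbed by the involution $i$ itself.
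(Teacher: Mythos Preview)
Your high-level plan is right: verify conditions \emph{(1)}--\emph{(5)} using the Bass--Serre tree, and in the cases where new behaviour can occur, split according to whether $i\ell$ is elliptic or hyperbolic. That is exactly the skeleton of the paper's proof. But two concrete ingredients are missing from your sketch, and one piece of your proposed argument is incorrect.

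\textbf{The missing technical core.} The paper does not attempt a structural description of $A_1$. Instead it isolates two lemmas (Lemmas \ref{lemme} and \ref{lemme2}) that do almost all of the work. Lemma \ref{lemme} says, roughly, that if the edge groups $H=\langle i,j\rangle$ and $K=\langle i,k\rangle$ satisfy $\vert gHg^{-1}\cap K\vert\le 2$ for all $g\in G$ and $K$ is quasi-malnormal, then centralizers and normalizers of elements of order $>2$ stay elliptic (indeed, stay in $G$ when the element is not in a conjugate of $K$). Lemma \ref{lemme2} handles the hyperbolic case and produces the minimal $k'$ with $\langle i,k'\rangle$ quasi-malnormal directly from the axis stabilizer. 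Once these two lemmas are in hand, conditions \emph{(3)}, \emph{(4)}, \emph{(5)} fall out by short case analyses. You allude to ``hyperbolic-axis considerations'' and ``pushing to a vertex group'', which is the right intuition, but the lemmas are where the content is.

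\textbf{The root argument you do not mention.} To apply Lemma \ref{lemme} one must check $\vert gHg^{-1}\cap K\vert\le 2$. This is not automatic, and it is where the minimality of $k$ and the quasi-malnormality from condition \emph{(5)} in $G$ actually enter (not, as you suggest, directly in the verification of \emph{(4)}). The argument is: elements of $K=\langle i,k\rangle$ of infinite order have only finitely many roots in $G$ (any root must lie in $K$ by quasi-malnormality), whereas every infinite-order element of a conjugate of $H=\langle i,j\rangle$ has $n$th roots for all $n$ by Lemma \ref{lemme0}. Hence the intersection cannot be infinite. This is a genuine idea that your sketch omits.

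\textbf{An error in your alternative route for \emph{(4)}.} You propose to describe $A_1=C_{G_1}(i)$ via its action on the fixed subtree $T^i$, with edge stabilizers ``the cyclic groups $\langle ij\rangle,\langle ik\rangle$''. But $ij$ does not centralize $i$ (in $D_\infty$, a reflection is inverted, not fixed, by conjugation by the translation), so $\langle ij\rangle\not\subset A_1$. The edge stabilizers in the $A_1$-action on $T^i$ are $A_1\cap\langle i,j\rangle=\langle i\rangle$ and its conjugates, not the translation subgroups. So this description, as written, is wrong; and in any case the paper avoids it entirely. For \emph{(4)} the paper simply takes $g$ centralizing $i$ and some $\ell\neq i$, observes $g\in N_{G_1}(\langle i,\ell\rangle)$, and splits: if $i\ell$ is hyperbolic, Lemma \ref{lemme2} gives $C_{G_1}(i\ell)=\langle ik'\rangle$, and no nontrivial power of $ik'$ commutes with $i$; if $i\ell$ is elliptic, Lemma \ref{lemme} forces $g$ into a conjugate $xGx^{-1}$, where malnormality of $C_G(\cdot)$ in $G$ finishes. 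Your Lemma \ref{lemme_translation} is not used here.

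In short: the architecture you propose is the paper's, but you have not identified the two lemmas that carry it, you are missing the roots-versus-quasi-malnormality argument that verifies their hypotheses, and your fallback structural approach to $A_1$ contains a mistake about which subgroups centralize $i$.
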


The following proposition will be used to ensure that all elements in our group are products of (at most two) translations, which is crucial to establish the simplicity of the group, in view of Lemma \ref{simple}.

\begin{prop}[see Proposition \ref{jesaispasb}]\label{jesaispas}
Let $G$ be a group in $\mathcal{C}$. Let $g,h\in G$ be two elements of order $n>2$. Suppose that the following conditions hold:
\begin{enumerate}
\item $\langle h\rangle$ contains no translation;
\item for every $y\in G$, $y\langle h\rangle y^{-1}\cap \langle g\rangle =\lbrace 1\rbrace$;
\item $\langle h\rangle$ is malnormal (i.e.\ $N_G(\langle h\rangle)=\langle h\rangle$).
\end{enumerate}
Define $G_1=\langle G,z \ \vert \ zgz^{-1}=h\rangle$. Then $G_1$ belongs to $\mathcal{C}$.
\end{prop}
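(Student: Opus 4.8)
The plan is to verify each of the five defining conditions of $\mathcal{C}$ for $G_1 = \langle G, z \mid zgz^{-1}=h\rangle$, working throughout with the action of $G_1$ on the Bass-Serre tree $T$ of this HNN extension. Conditions \emph{(1)} and \emph{(2)} are immediate from Lemma \ref{pres_HNN}, since $G_1$ is an HNN extension of $G \in \mathcal{C}$. The three involutions $i,j,s$ with $j=sis$ are inherited from $G$. So the real content is in verifying conditions \emph{(3)}, \emph{(4)} and \emph{(5)}, and the key structural input is a careful analysis of which vertices of $T$ are fixed by the relevant finite subgroups (involutions, dihedral groups $\langle i,k\rangle$, and the centralizer $A=C_{G_1}(i)$).

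The first main step is to understand $A = C_{G_1}(i)$. Since $i$ is elliptic in $T$ and fixes a unique vertex (its centralizer in $G$ being all of the old $A$, which is malnormal in $G$ hence cannot contain hyperbolic-inducing elements — more precisely, any element centralizing $i$ fixes the vertex fixed by $i$ because stabilizers of vertices are conjugates of $G$ and $A$ is malnormal in $G$, so $A \cap gGg^{-1}$ behaves well), I expect to show $C_{G_1}(i) = C_G(i)$, i.e.\ $A$ does not grow. This uses hypotheses \emph{(2)} and \emph{(3)} on $g,h$ to rule out the new stable letter $z$ (or words involving it) contributing to the centralizer: an element of $G_1$ centralizing $i$ and not lying in a single vertex group would, via the normal form / Britton's lemma, force a nontrivial intersection of a conjugate of $\langle h \rangle$ with $\langle g\rangle$, or with $A$, contradicting the malnormality/disjointness hypotheses. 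Once $A = C_G(i)$ is established, condition \emph{(4)} (malnormality of $A$ in $G_1$) follows by the usual tree argument: if $x A x^{-1} \cap A \neq \{1\}$ with $x \notin A$, both $A$ and $xAx^{-1}$ fix vertices of $T$ (namely the vertex $v$ fixed by $i$ and $x\cdot v$), and a nontrivial common element forces $v = x \cdot v$ (since stabilizers of pairs of distinct vertices along the path are controlled by malnormality of $\langle h\rangle$ and the edge-group conditions), whence $x$ stabilizes $v$, so $x$ lies in a vertex group $gGg^{-1}$; then malnormality of $A$ in $G$ (after conjugating) gives $x \in A$, a contradiction. Condition \emph{(3)} is similar in spirit: if $k \sim j$ then $(ik)^m = (ij)^n$ for some nonzero $m,n$; this element is a nontrivial power of a translation, hence has infinite order, but it is also elliptic-related... actually $k \sim j$ means $\langle i,k\rangle$ and $\langle i,j\rangle$ share a common infinite cyclic subgroup, and one shows $\langle i,k\rangle$ fixes the same vertex $v$ as $i$ and $j$ do (the new letter $z$ cannot enter because $\langle h\rangle$ contains no translation, so no translation from the new part can equal $(ij)^n$), reducing to condition \emph{(3)} inside $G$, which then gives $k \in \mathrm{Orb}_A(j)$ with the conjugating element in $C_G(i) = A$.

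The last and most delicate step is condition \emph{(5)}: given an involution $k \neq i$ with $k \notin \mathrm{Orb}_A(j)$, produce a minimal $k'$ with $\langle i,k\rangle \subset \langle i,k'\rangle$ and $\langle i,k'\rangle$ quasi-malnormal. Here I would first reduce to the case where $k$ (being elliptic) fixes a vertex, and consider two cases according to whether $\langle i,k\rangle$ fixes a vertex of $T$ or not. If $\langle i,k\rangle$ fixes a common vertex, it lies in a conjugate of $G$ and we can import the $k'$ provided by condition \emph{(5)} for $G$ — but we must then re-check that this $k'$ stays minimal and that $\langle i,k'\rangle$ stays quasi-malnormal \emph{in the larger group} $G_1$, which is again a tree argument: any $g \in G_1 \setminus \langle i,k'\rangle$ with $g\langle i,k'\rangle g^{-1} \cap \langle i,k'\rangle$ of order $>2$ must fix the axis of the translation $(ik')$, hence stabilize a line in $T$; controlling the line-stabilizer via the Proposition on $\mathrm{Stab}(A(g))$ and the hypotheses on $g,h$ forces $g$ into a vertex group, reducing to quasi-malnormality in $G$. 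If $\langle i,k\rangle$ does \emph{not} fix a vertex, then $ik$ is hyperbolic in $T$; but $ik$ has finite order or... no, $ik$ is a translation, and one must show this case cannot arise for the $k$ in question, or handle it by taking $k'$ so that $\langle i,k'\rangle$ is the full line-stabilizer intersected appropriately — this is where I expect the main obstacle to lie, because one needs to show such a $k'$ exists, is minimal, and the resulting dihedral-type subgroup is quasi-malnormal, and this is precisely the analogue of the ``discreteness of lines'' argument that was the heart of \cite{RT}. I would expect the proof to invoke the hypothesis that $g,h$ have order $n > 2$ (so the stable letter relation does not interact with order-2 elements) and malnormality of $\langle h\rangle$ crucially at this point to keep the new dihedral lines from being nested into larger subgroups in an uncontrolled way. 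The bulk of the technical work, deferred to Section \ref{proofs}, is this case analysis together with the Britton's-lemma bookkeeping needed to show the various subgroups do not grow when passing from $G$ to $G_1$.
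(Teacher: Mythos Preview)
Your outline is broadly correct and follows the paper's architecture: conditions \emph{(1)} and \emph{(2)} via Lemma~\ref{pres_HNN}, condition \emph{(3)} by showing that any $k\sim j$ forces $ik$ to centralize a translation $(ij)^m$, which (since $\langle h\rangle$ contains no translation) cannot lie in a conjugate of the edge group and hence has centralizer contained in $G$ by Lemma~\ref{lemme}; and condition \emph{(5)} by the elliptic/hyperbolic dichotomy for $ik$. Two points, however, diverge from the paper and deserve comment.

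First, for condition \emph{(4)} you propose to begin by proving $C_{G_1}(i)=C_G(i)$ and then run a tree argument for malnormality. The paper does \emph{not} do this: it never computes $A_1$ explicitly. Instead it invokes Lemma~\ref{lemma1} (malnormality of $A_1$ is equivalent to $C_{G_1}(i)\cap C_{G_1}(k)=\{1\}$ for every involution $k\neq i$) and then splits on whether $ik$ is hyperbolic or elliptic, exactly as in the proof of Proposition~\ref{jesaispas0b}. In the hyperbolic case, Lemma~\ref{lemme2} gives $C_{G_1}(ik)=\langle ik'\rangle$, which meets $A_1$ trivially since $i\ell$ can never commute with $i$. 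In the elliptic case, Lemma~\ref{lemme} places $N_{G_1}(\langle i,k\rangle)$ inside a conjugate of $G$, reducing to malnormality in $G$. Your route via $A_1=A$ is not wrong, but it is more delicate than you suggest: if $n$ is even the edge groups contain involutions, so $\mathrm{Fix}_T(i)$ need not be a single vertex, and your ``nontrivial common element forces $v=x\cdot v$'' step would require exactly the kind of case analysis the paper's approach avoids.

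Second, and more importantly, you flag the hyperbolic case of condition \emph{(5)} as ``where I expect the main obstacle to lie'' and leave it unresolved (``one must show this case cannot arise\ldots or handle it by\ldots''). In the paper this case is \emph{not} an obstacle at all: it is dispatched in one line by Lemma~\ref{lemme2}, whose hypotheses (quasi-malnormality of $K=\langle h\rangle$ and $\lvert yHy^{-1}\cap K\rvert\leq 2$ for all $y\in G$, with $H=\langle g\rangle$) are exactly the assumptions \emph{(2)} and \emph{(3)} of the proposition. That lemma shows, for any hyperbolic $ik$, that $\mathrm{Stab}(A(ik))=\langle i,k'\rangle$ for some minimal $k'$ with $\langle i,k'\rangle$ quasi-malnormal. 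The genuine work in Lemma~\ref{lemme2} is showing that the pointwise stabilizer $S$ of the axis is trivial, and this is where $n>2$ and malnormality of $\langle h\rangle$ enter: if $\lvert S\rvert>2$ then $S$ contains an element of order $>2$, whose normalizer is elliptic by Lemma~\ref{lemme}, contradicting the fact that the hyperbolic $ik$ normalizes $S$. So the ``discreteness of lines'' heuristic you mention is precisely the content of Lemma~\ref{lemme2}, already proved in generality; you should recognize that it applies here rather than treating this case as open.
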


\begin{rque}
Note that there are two basic obstructions for a group of the form $G_1=\langle G,z \ \vert \ zgz^{-1}=h\rangle$ to belong to the class $\mathcal{C}$, and this proposition shows that these are essentially the only obstructions.
\begin{enumerate}
    \item \emph{First obstruction:} if $g$ centralizes an involution and $h$ is a translation, then $G_1$ contains a translation that centralizes an involution, and hence $G_1$ cannot be in $\mathcal{C}$ by virtue of Lemma \ref{lemme_translation} which asserts that the centralizer of an involution cannot contain a translation. This obstruction is ruled out by the first assumption in the proposition above, namely that $\langle h\rangle$ contains no translation.
    \item \emph{Second obstruction:} if $g$ centralizes an involution and $h$ centralizes an involution, then $g$ centralizes two distinct involutions in $G_1$, and this is not possible according to Lemma \ref{lemma1} which says that no nontrivial element centralizes a pair of distinct involutions in a group belonging to the class $\mathcal{C}$. This second obstruction is ruled out by the third assumption in the proposition above, namely that $\langle h\rangle$ is malnormal (which implies that $h$ does not commute with any involution).
\end{enumerate}
\end{rque}

\subsection{Stability under amalgamated products}~\ \smallskip

Let $K$ be a field. Observe that in the affine group $\mathrm{AGL}_1(K)$, involutions do not belong to the subgroup generated by the translations. The goal of the following proposition is to embed $\mathrm{AGL}_1(K)$ (or more generally any group from the class $\mathcal{C}$) into a group that belongs to $\mathcal{C}$ and in which some (equivalently, any) involution is a product of two translations.

\begin{prop}[see Corollary \ref{coro10}]\label{coro1}
Let $G$ be a group in $\mathcal{C}$. Consider the following group $H$: \[H=\Biggl\langle 
       \begin{array}{l|cl}
                        & abcde=1 \\
            a,b,c,d,e,u,v,w,x & a^2=b^2=c^2=d^2=e^2=1 \\
                        & ubu^{-1}=vcv^{-1}=wdw^{-1}=xex^{-1}=a  &                                             
        \end{array}
     \Biggr\rangle.\]
Then $G_1=G\ast_{i=a}H$ belongs to $\mathcal{C}$.
\end{prop}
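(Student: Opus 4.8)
The plan is to verify the five conditions of Definition \ref{class} for $G_1 = G \ast_{i=a} H$, using the general amalgamation machinery already set up, and for this I first need to understand the auxiliary group $H$ well enough. The group $H$ is built so that $a,b,c,d,e$ are involutions with product $abcde=1$, and each of $b,c,d,e$ is conjugate to $a$. I would first analyze $H$ as a graph of groups: it is a free product of the free product $\langle a \rangle \ast \langle b \rangle \ast \langle c \rangle \ast \langle d \rangle$ (note $e = (abcd)^{-1}$ is determined) with the cyclic groups $\langle u \rangle, \langle v \rangle, \langle w \rangle, \langle x \rangle$ amalgamated along the conditions $ubu^{-1}=a$, etc. Concretely, $H$ decomposes as an iterated amalgam/HNN extension over finite groups, and I would record the following facts about $H$: (i) every involution of $H$ is conjugate to $a$; (ii) any two distinct involutions of $H$ generate an infinite dihedral group; (iii) $a$ is "minimal" in $H$ and $\langle a, i'\rangle$-type malnormality statements hold; and crucially (iv) $C_H(a)$ is malnormal in $H$ and contains no translation. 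These follow from the Bass--Serre tree of $H$ together with Proposition \ref{dih}, much as in Lemmas \ref{pres_HNN}, \ref{pres_amalgam}, \ref{pres_amalgam2}. The point of the relations $abcde=1$ together with $ubu^{-1}=a$ etc.\ is that in $G_1$ one gets $i = a = b\cdot(\text{conjugate}) $-type expressions making $i$ a product of translations; I would make this explicit early since it motivates the construction.

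Next I would set up the amalgam $G_1 = G \ast_{i=a} H$ and its Bass--Serre tree $T$. Conditions \emph{(1)} and \emph{(2)} of Definition \ref{class} are handled by (the proof technique of) Lemma \ref{pres_amalgam}: every involution of $G_1$ is elliptic, hence conjugate into $G$ or $H$, and since all involutions of $G$ are conjugate to $i=a$ and all involutions of $H$ are conjugate to $a$, all involutions of $G_1$ are conjugate; two involutions either share a fixed vertex (use condition \emph{(2)} inside $G$ or inside $H$) or they don't (use Proposition \ref{dih}). For condition \emph{(4)}, malnormality of $A = C_{G_1}(i)$: by Lemma \ref{lemma1} it suffices to show no nontrivial element of $G_1$ centralizes two distinct involutions $i, k$. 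If an element $g$ centralizes $i$ and $k$, then $g$ stabilizes the bi-infinite geodesic / the dihedral configuration; using that the edge group of the amalgam is $\langle i \rangle = \mathbb{Z}/2$, any element fixing two vertices at distance $\geq 1$ lies in a conjugate of a vertex group and in fact the stabilizer of the relevant segment is trivial because $C_G(i)$ and $C_H(a)$ are malnormal in their respective factors and their intersection inside the amalgam is controlled by the trivial intersection of point-pair stabilizers. I expect this to require a careful normal-form / tree-stabilizer argument distinguishing the cases according to whether $i$ and $k$ are conjugate via an element of $G$, of $H$, or via a genuinely mixed word.

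For conditions \emph{(3)} and \emph{(5)} I would argue as follows. Condition \emph{(3)} says any $k \sim j$ lies in $\mathrm{Orb}_A(j)$; since $j = sis$ with $i,j,s \in G$, the relation $k \sim j$ forces $\langle i,k\rangle$ and $\langle i,j\rangle$ to share a nontrivial power $(ij)^m = (ik)^n$, hence the same axis in $T$; that axis lies in (a conjugate of) the $G$-factor because $\langle i,j\rangle$ does, so $k$ is conjugate to $j$ by an element fixing $i$, and one then reduces to condition \emph{(3)} holding already in $G$ (using again malnormality and the trivial intersection of the edge group with conjugates). Condition \emph{(5)} concerns $k \in I \setminus \{i\}$ with $k \notin \mathrm{Orb}_A(j)$: I would split into the case where $\langle i,k\rangle$ sits (up to conjugacy) inside the $G$-factor — then apply condition \emph{(5)} of $\mathcal C$ for $G$ and transport the witness $k'$, checking its minimality and quasi-malnormality survive in $G_1$ (minimality: a larger dihedral group $\langle i, k''\rangle \supset \langle i,k'\rangle$ would again share an axis lying in the $G$-factor, contradicting minimality in $G$; quasi-malnormality: edge-group intersection arguments) — and the case where the axis of $(ik)$ meets the $H$-side or is "new", where I would produce $k'$ from the structure of $H$ (here the minimality of $a$ in $H$ and quasi-malnormality of $\langle a, \text{involution}\rangle$ in $H$, established in the first step, do the work).

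The main obstacle, I expect, is condition \emph{(5)} in the "mixed" case together with the malnormality verification for condition \emph{(4)}: precisely, controlling axes and setwise stabilizers of lines in the Bass--Serre tree of $G_1$ when the relevant dihedral group is not conjugate into a single factor, and making sure the carefully-chosen minimal, quasi-malnormal $k'$ can always be found — this is exactly the analogue of the delicate "discreteness of the line through $i$ and $k$" point that is the heart of \cite{RT}, and it is where the specific design of $H$ (the relations $abcde = 1$ and $ubu^{-1}=\cdots=a$, chosen so that the new translations behave well) must be used in an essential way. A secondary technical nuisance is bookkeeping the several sub-amalgamations/HNN extensions inside $H$ itself to get the four facts (i)--(iv) about $H$; I would likely isolate that as a preliminary lemma rather than inline it, and lean on Lemmas \ref{pres_HNN}, \ref{pres_amalgam}, \ref{pres_amalgam2} and Proposition \ref{dih} wherever possible.
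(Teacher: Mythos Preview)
Your overall architecture---verify good properties of $H$, then push them through the amalgam $G\ast_{i=a}H$ via Bass--Serre theory---matches the paper, and your treatment of conditions \emph{(1)}, \emph{(2)}, \emph{(3)}, \emph{(4)} and of the hyperbolic case of \emph{(5)} is essentially what the paper does (the paper packages the amalgam step once and for all as Proposition~\ref{prop5}, and handles the hyperbolic case via Lemma~\ref{lemme_cas_amalgam}). The gap is in your analysis of $H$ itself. You write that the subgroup generated by $a,b,c,d,e$ is the free product $\langle a\rangle\ast\langle b\rangle\ast\langle c\rangle\ast\langle d\rangle$ with $e=(abcd)^{-1}$ ``determined''; but you have dropped the relation $e^2=1$, which forces $(abcd)^2=1$. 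The group
\[
O=\langle a,b,c,d,e \mid abcde=a^2=b^2=c^2=d^2=e^2=1\rangle
\]
is the orbifold fundamental group of a sphere with five cone points of order $2$; it is a cocompact Fuchsian group, in particular not virtually free, and hence does \emph{not} act on a tree with finite edge groups in any useful way. Your plan to get facts (i)--(iv) and especially condition~\emph{(5)} for $H$ ``from the Bass--Serre tree of $H$ together with Proposition~\ref{dih}'' therefore breaks down precisely when $ik$ is elliptic and lands in a conjugate of $O$: there is no tree left to appeal to, and you have no argument that a minimal $k'$ with $\langle a,k'\rangle$ quasi-malnormal exists inside $O$.

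The paper supplies exactly the missing ingredient: it uses the properly discontinuous cocompact action of $O$ on $\mathbb{H}^2$. Distinct involutions have distinct fixed points, products of two involutions are hyperbolic isometries of $\mathbb{H}^2$, and discreteness of the action gives both the existence of a minimal $k'$ (minimal translation length along the common geodesic) and quasi-malnormality of $\langle a,k'\rangle$ (stabilizer of the pair of endpoints at infinity). The same geometry gives $C_O(a)=\langle a\rangle$, which is what is actually needed for condition~\emph{(4)} in Proposition~\ref{prop5} (your formulation ``$C_H(a)$ is malnormal'' is weaker than the required $C_H(a)=\langle a\rangle$). Once these facts about $O$ are in hand, the passage from $O$ to $H$ by the four HNN extensions, and then from $H$ to $G_1$, goes through by the tree arguments you sketch.
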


\begin{rque}
Note that the involution $i=a$ is a product of two translations in $G_1$, namely $bc$ and $de$. The additional letters $u,v,w,x$ in the presentation of $H$ are here to ensure that there is only one conjugacy class of involutions in $H$, hence in $G_1$.
\end{rque}

The following result is similar in spirit to the previous one. It will be used in combination with Proposition \ref{jesaispas} to construct, given a group $G\in \mathcal{C}$ and an element $g\in G$ of order $>2$, a group $G'\in \mathcal{C}$ such that $G\subset G'$ and $g$ is a product of two translations in $G'$.

\begin{prop}[see Corollary \ref{coro20}]\label{coro2}
Let $G$ be a group in $\mathcal{C}$. Let us define the following groups, for $n\geq 2$:\[H_{\infty}=\Biggl\langle 
       \begin{array}{l|cl}
                        & abcdh=1 \\
            a,b,c,d,h,u,v,w & a^2=b^2=c^2=d^2=1 \\
                        & ubu^{-1}=vcv^{-1}=wdw^{-1}=a  &                                             
        \end{array}
     \Biggr\rangle,\]
     \[H_{2n-1}=\Biggl\langle 
       \begin{array}{l|cl}
                        & abcdh=1 \\
            a,b,c,d,h,u,v,w & a^2=b^2=c^2=d^2=h^{2n-1}=1 \\
                        & ubu^{-1}=vcv^{-1}=wdw^{-1}=a  &                                             
        \end{array}
     \Biggr\rangle,\]
     \[H_{2n}=\Biggl\langle 
       \begin{array}{l|cl}
                        & abcdh=1 \\
            a,b,c,d,h,u,v,w,x & a^2=b^2=c^2=d^2=h^{2n}=1 \\
                        & ubu^{-1}=vcv^{-1}=wdw^{-1}=xh^nx^{-1}=a  &                                             
        \end{array}
     \Biggr\rangle.\]
Let $H_m$ denote one of the previous groups (with $m\geq 3$, possibly infinite). Then $G_1=G\ast_{i=a}H_m$ belongs to $\mathcal{C}$. Moreover, the following conditions hold, for any element $g\in G$ of order $m$ (these conditions should be compared with the assumptions of Proposition \ref{jesaispas}):
\begin{enumerate}
    \item $\langle h\rangle$ contains no translation;
    \item for every $y\in G_1$, $y\langle g\rangle y^{-1}\cap \langle h\rangle=\lbrace 1\rbrace$;
    \item $\langle h\rangle$ is malnormal in $G_1$.
\end{enumerate}
\end{prop}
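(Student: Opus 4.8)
The statement is a combination of two things: (a) that $G_1 = G *_{i=a} H_m$ belongs to $\mathcal{C}$, and (b) that the three numbered conditions hold for an element $g \in G$ of order $m$. For part (a) the plan is to verify each of the five conditions of Definition~\ref{class}, reusing as much of the machinery already developed as possible. Conditions (1) and (2) are the easy ones: since $H_m$ is designed so that it has a unique conjugacy class of involutions (the extra generators $u,v,w$ — and $x$ in the even case — conjugate $b,c,d$, resp.\ $h^n$, to $a$), and since $a=i$, every involution of $H_m$ is conjugate to $i$; then Lemma~\ref{pres_amalgam} (or an argument patterned on it: an involution of $G_1$ is elliptic in the Bass--Serre tree, hence conjugate into $G$ or $H_m$) gives condition (1), and the same ellipticity argument together with Lemma~\ref{dih} and condition (2) in each factor gives condition (2). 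For conditions (3), (4), (5) the key structural fact to establish first is that no element of $H_m \setminus \langle a\rangle$ commutes with $a$ and, more generally, a description of which involutions of $G_1$ lie in $\mathrm{Orb}_A(j)$: since $A = C_{G_1}(i)$ is contained in $G$ (once one checks $C_{H_m}(a) = \langle a\rangle$, which follows from the normal-form / Bass--Serre analysis of the one-relator-style presentation of $H_m$), the orbit $\mathrm{Orb}_A(j)$ is unchanged from what it was in $G$, and the ``new'' involutions contributed by $H_m$ and by the amalgam all lie outside it.

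With that in hand, condition (4) — malnormality of $A$ in $G_1$ — should follow from the malnormality of $A$ in $G$ together with a standard malnormality-in-amalgams argument: if $g A g^{-1} \cap A$ is nontrivial, one pushes $g$ into $G$ using that $A \le G$ and that $A$ intersects its conjugates by elements of $H_m$ trivially (because $C_{H_m}(a) = \langle a \rangle$ and $a$ is the only involution of $\langle a\rangle$). Condition (3) — if $k \sim j$ then $k \in \mathrm{Orb}_A(j)$ — is handled by noting that $k \sim j$ forces a relation $(ij)^m = (ik)^n$, hence $i(ij)^m$ is centralized by an element conjugating appropriately; analyzing where $k$ sits in the Bass--Serre tree of $G_1$, one shows $\langle i,k\rangle$ is conjugate into a vertex group, and then reduces to condition (3) in $G$. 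Condition (5) is the place where the specific shape of $H_m$ matters: for a new involution $k \notin \mathrm{Orb}_A(j)$ one must exhibit a minimal $k'$ with $\langle i,k\rangle \subseteq \langle i,k'\rangle$ and $\langle i,k'\rangle$ quasi-malnormal; for involutions coming from inside $H_m$ one reads $k'$ off directly from the presentation (the dihedral subgroups $\langle a, b\rangle$, $\langle a,c\rangle$, $\langle a, d\rangle$, $\langle a, h\rangle$ are by construction the maximal ones, and their quasi-malnormality is a normal-form computation), and for involutions genuinely contributed by the amalgamation one uses condition (5) in $G$ together with the fact that a dihedral group $\langle i, k\rangle$ with $k$ conjugate into $G$ keeps its witness $k'$ from $G$ (checking that $\langle i, k'\rangle$ stays quasi-malnormal in the larger group $G_1$, again via a malnormal-amalgam computation).

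For part (b), the three conditions are verified by direct Bass--Serre / normal-form analysis of $G_1 = G *_{i=a} H_m$, where $h$ is the designated generator of $H_m$. Condition (1), that $\langle h\rangle$ contains no translation: a translation is a product of two distinct involutions hence (being of the relevant order) one analyzes conjugates; since $h$ lies in the factor $H_m$ and $\langle h\rangle \cap (\text{set of translations})$ can be computed inside $H_m$, where by construction $h$ has order $m$ and the only way a power of $h$ could be a product of two involutions is ruled out by the presentation (in the odd cases $h$ has odd order so $\langle h\rangle$ has no involutions at all, forcing any such product to be trivial; in the even case $H_{2n}$ one checks $h^n$, the unique involution of $\langle h \rangle$, is not a product of two \emph{distinct} involutions of the ambient group, or rather that no power of $h$ is a translation, using that $x h^n x^{-1} = a$ makes $h^n$ conjugate to $i$ and Lemma~\ref{lemme_translation}). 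Condition (2), $y\langle g\rangle y^{-1} \cap \langle h\rangle = \{1\}$ for all $y \in G_1$: this is a malnormal-type statement combining $g \in G$, $h \in H_m$, the fact that $\langle g\rangle$ and $\langle h\rangle$ are finite cyclic hence elliptic, and that in the amalgam a conjugate of $\langle g \rangle$ meeting $\langle h \rangle$ nontrivially would have to be conjugate into $H_m$ and then into $\langle h\rangle$ up to the amalgamation subgroup $\langle a\rangle$ — but $\langle g\rangle \cap \langle a \rangle$ is trivial or $\langle a\rangle$ depending on parity, and one checks the intersection is forced to be trivial. Condition (3), malnormality of $\langle h\rangle$ in $G_1$: one first checks $N_{H_m}(\langle h\rangle) = \langle h\rangle$ from the presentation, then lifts to $G_1$ by the standard argument that the normalizer of a subgroup of a vertex group which is not conjugate into the edge group stays inside that vertex group.

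\textbf{Main obstacle.} The routine parts are the ``elliptic in the Bass--Serre tree'' arguments; the genuinely delicate point is the fine combinatorial analysis of the presentations of $H_m$ — establishing $C_{H_m}(a) = \langle a\rangle$, $N_{H_m}(\langle h\rangle) = \langle h\rangle$, that $\langle a, b\rangle$ etc.\ are quasi-malnormal and maximal, and that no power of $h$ is a translation. These groups are themselves built from amalgams/HNN pieces (one should recognize $H_m$ as an iterated amalgam of infinite dihedral groups, or of dihedral groups with a cyclic group, over involutions), so the cleanest route is probably to analyze $H_m$ itself by Bass--Serre theory rather than by raw normal forms — identifying the underlying graph of groups, whose vertex groups are the visible dihedral (or dihedral-and-cyclic) subgroups and whose edges are the $\mathbb{Z}/2$'s identified to $\langle a\rangle$ — and then every assertion about $H_m$ reduces to the same elliptic-element bookkeeping used for $G_1$. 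I expect verifying condition (5) of Definition~\ref{class} for the new involutions, i.e.\ producing and certifying the minimal quasi-malnormal $k'$, to be the most labor-intensive single step.
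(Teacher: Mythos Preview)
Your overall architecture is sound and close to the paper's, but you are missing one organisational shortcut and one geometric idea, and your argument for condition~(1) has a real gap. Organisationally, the paper does not re-verify the five axioms of Definition~\ref{class} for $G_1$ from scratch: it simply checks that $H_m$ satisfies the hypotheses of Proposition~\ref{prop5} (involutions conjugate, any two generate $D_\infty$, $\langle a\rangle$ malnormal in $H_m$, and condition~(5) internal to $H_m$), and then Proposition~\ref{prop5} delivers $G_1\in\mathcal{C}$ in one stroke. Your plan essentially re-proves Proposition~\ref{prop5} inline; that works, but it is duplicated effort. Geometrically, where you propose to analyse $H_m$ as an iterated amalgam over $\mathbb{Z}/2$'s, the paper instead observes (as in Corollary~\ref{coro10}) that the subgroup $O=\langle a,b,c,d,h\rangle$ is the orbifold fundamental group of a sphere with four cone points of order $2$ and one of order $m$ (a puncture when $m=\infty$), hence acts properly on $\mathbb{H}^2$. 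From that action one reads off immediately that distinct involutions have distinct fixed points (so generate $D_\infty$), that $C_O(a)=\langle a\rangle$, that condition~(5) holds in $O$, and that $\langle h\rangle$ is malnormal in $O$ (it is the stabiliser of a unique point of $\mathbb{H}^2$, or of a unique pair in $\partial\mathbb{H}^2$ when $m=\infty$). The passage from $O$ to $H_m$ is then a short HNN argument, exactly as you sketch. Your iterated-amalgam route would also succeed, but the orbifold picture replaces several normal-form computations by one-line geometric facts; in particular your claim that ``$\langle a,b\rangle,\langle a,c\rangle,\langle a,d\rangle,\langle a,h\rangle$ are by construction the maximal dihedral ones'' is not quite right ($\langle a,h\rangle$ is not dihedral, since $a$ does not invert $h$), whereas on $\mathbb{H}^2$ minimality and quasi-malnormality of $\langle i,k'\rangle$ come for free from discreteness.

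The genuine gap is your treatment of condition~(1), that $\langle h\rangle$ contains no translation. Your sentence ``in the odd cases $h$ has odd order so $\langle h\rangle$ has no involutions at all, forcing any such product to be trivial'' conflates translations with involutions: a translation is a nontrivial product of two involutions, and there is no reason a priori why an element of odd order cannot be such a product. Likewise the appeal to Lemma~\ref{lemme_translation} in the even case does not bite, since that lemma concerns $A=C_G(i)$ and $h^n$ is merely conjugate to $i$, not in $A$. The paper's argument is different and uses the orbifold picture: one first reduces to $H_m$ (if $h^k=\alpha\beta$ in $G_1$ with $\alpha,\beta$ involutions, then $\alpha$ inverts $h^k$, hence normalises $\langle h^k\rangle$, hence fixes the unique vertex $v_H$ of the Bass--Serre tree fixed by $h^k$; so $\alpha,\beta\in H_m$), and then inside $O$ observes that $h=d\,c\,b\,a$ is a reduced word of length $4$ in the orbifold generators, so $h^k$ has reduced length $4|k|$ and cannot equal a product of two involutions, which has length at most $2$. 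You should replace your parity argument by this length/normal-form count (or, equivalently, by translation-length considerations in $\mathbb{H}^2$). Conditions~(2) and~(3) you handle essentially as the paper does: for~(2), $h$ fixes only $v_H$ and $g$ fixes only $v_G$, so any element of $y\langle g\rangle y^{-1}\cap\langle h\rangle$ lies in an edge group, hence has order $\le 2$; for~(3), malnormality of $\langle h\rangle$ in $H_m$ plus the unique-fixed-vertex argument gives malnormality in $G_1$.
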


\begin{rque}
The distinction between the presentations of $H_{2n-1}$ and $H_{2n}$ lies in the fact that, in the latter case, the cyclic group $\langle h\rangle$ contains an involution, which has to be conjugate to $a$.\end{rque}

The following two propositions can be proved in a similar manner as the previous results. Proposition \ref{i} below shows that one can extend the centralizer of the involution $i$ by a group without 2-torsion, and Proposition \ref{free} shows that one can perform a free product by a group without 2-torsion.


\begin{prop}\label{i}
Let $G$ be a group in $\mathcal{C}$. Let $H$ be a group containing no $2$-torsion. Define $G_1=G\ast_i(\langle i\rangle \times H)$. Then $G_1$ belongs to $\mathcal{C}$.
\end{prop}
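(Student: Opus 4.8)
The plan is to verify that $G_1 = G \ast_i (\langle i \rangle \times H)$ satisfies all five conditions in Definition \ref{class}, keeping the same distinguished involutions $i, j, s$ as in $G$. The key structural tool is the action of $G_1$ on the Bass-Serre tree $T$ of the amalgamated free product, together with the observation that the edge group $\langle i \rangle$ has order $2$, so every edge stabilizer is $\{1, i\}$ and only $i$ itself fixes an edge among the involutions. I would first record the following facts about $G_1$: every element of finite order is elliptic, hence conjugate into $G$ or into $\langle i \rangle \times H$; since $H$ has no $2$-torsion, every involution of $\langle i \rangle \times H$ is equal to $i$ (an element $(i^\varepsilon, h)$ has order $2$ only if $h = 1$ and $\varepsilon = 1$); and the vertex stabilizer $\langle i \rangle \times H$ has the property that its only involution is $i$, which lies in the edge group. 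From this, condition \emph{(1)} follows because every involution is conjugate to an involution of $G$ or to $i$, and all of those are conjugate to $i$ in $G$; condition \emph{(2)} follows from Lemma \ref{pres_amalgam2} (with $G' = \langle i \rangle \times H$, whose only involution is $i'=i$), or directly from Proposition \ref{dih} together with condition \emph{(2)} in $G$.

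For the remaining conditions the point is that the ``new'' part $H$ interacts with the old structure only through $i$. First I would prove the analogue of Lemma \ref{lemma1}, i.e.\ that no nontrivial element of $G_1$ centralizes two distinct involutions; equivalently that $A_1 := C_{G_1}(i)$ is malnormal, which is condition \emph{(4)}. The claim is that $A_1 = A \ast_{\langle i \rangle} (\langle i \rangle \times H) = A \times_{\langle i \rangle}$... more precisely $C_{G_1}(i)$ splits as an amalgam $A \ast_{\langle i\rangle}(\langle i\rangle\times H)$ over $\langle i \rangle$: indeed an element centralizing $i$ must stabilize the fixed-point set of $i$ in $T$, which (since $i$ fixes exactly one edge, as $\langle i\rangle$ is the full edge group and $i$ fixes both endpoints) is the star of... here one has to be slightly careful, but the upshot via Bass-Serre theory for the centralizer is the stated amalgam. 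Then if a nontrivial $h \in A_1$ also centralizes an involution $k \neq i$, one looks at where $k$ sits: $k$ is conjugate to an involution, hence $k = g i g^{-1}$ for some $g$, and $h \in A_1 \cap gA_1g^{-1}$; using the action of $A_1$ on its own Bass-Serre tree and the fact that inside $G$ the centralizer of a pair is trivial (Lemma \ref{lemma1} for $G$), one derives $h = 1$. The technical heart is controlling how an element of the amalgam $A_1$ can conjugate one copy of $\langle i \rangle$ to another; this reduces, by a normal-form argument, to the corresponding statement in $G$.

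Next, conditions \emph{(3)} and \emph{(5)}: I would show that the equivalence class and the minimality/quasi-malnormality structure of the line through $i$ are unchanged by the amalgamation. If $k \sim j$ in $G_1$, then $(ik)^m = (ij)^n$ for some nonzero $m,n$; this element is hyperbolic in $T$ unless $k \in G$ (note $ij$ is hyperbolic in $T$ only if... actually $ij \in G$ so it is elliptic in $T$; the subtlety is that $(ik)^m$ elliptic forces $k$ to fix a common vertex with $i$, hence $k$ lies in a conjugate of $G$ or of $\langle i\rangle\times H$ fixing that vertex, and since the only involution near $i$ outside $G$ is $i$ itself, we get $k \in G$). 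So $k \in G$, $k \sim j$ in $G$, hence $k \in \mathrm{Orb}_A(j) \subset \mathrm{Orb}_{A_1}(j)$, giving \emph{(3)}. For \emph{(5)}, given $k \neq i$ with $k \notin \mathrm{Orb}_{A_1}(j)$: by the same ellipticity argument applied to $ik$, either $k \in G$ or $k$ is $G_1$-conjugate to $i$; in the latter case $k \sim j$ would follow and $k \in \mathrm{Orb}_{A_1}(j)$, so in fact $k \in G$, and moreover $k \notin \mathrm{Orb}_A(j)$ (since $\mathrm{Orb}_{A_1}(j) \cap I_G = \mathrm{Orb}_A(j)$, by the malnormality/normal-form argument again). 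Then the witness $k'$ provided by condition \emph{(5)} for $G$ works for $G_1$: $\langle i, k' \rangle$ is still quasi-malnormal in $G_1$ because any conjugator realizing a large intersection $\langle i,k'\rangle \cap g\langle i,k'\rangle g^{-1}$ would have to fix the axis $A(ik')$ in $T$ (this axis is in the subtree fixed by nothing except via $G$-translates), reducing to quasi-malnormality of $\langle i, k'\rangle$ in $G$; and minimality of $k'$ is likewise inherited since any $k''$ with $\langle i,k'\rangle \subsetneq \langle i,k''\rangle$ forces $k'' \in G$. The main obstacle throughout is the last point: making the Bass-Serre / normal-form bookkeeping for the centralizer $A_1$ and for the dihedral subgroups $\langle i, k'\rangle$ precise enough to transfer malnormality and quasi-malnormality from $G$ to $G_1$ — this is exactly the kind of argument carried out in detail in Section \ref{proofs} for the other stability propositions, and I expect it to run along the same lines, simply exploiting that the edge group here is the cyclic group of order $2$ generated by $i$.
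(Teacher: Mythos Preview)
Your outline for conditions \emph{(1)}, \emph{(2)} and \emph{(3)} is correct and matches the template of Proposition~\ref{prop5}: in particular, for \emph{(3)} the key point is that $(ij)^n$ has infinite order and therefore fixes only the vertex $v_G$ (edge groups have order $2$), so anything centralizing it, in particular $ik$, lies in $G$. Your treatment of \emph{(4)} is vague but salvageable along the lines of the elliptic/hyperbolic dichotomy used in the proof of Proposition~\ref{jesaispas0b}.

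The genuine gap is in condition \emph{(5)}. You assert that for any involution $k\notin\mathrm{Orb}_{A_1}(j)$ one can reduce to $k\in G$, via ``either $k\in G$ or $k$ is $G_1$-conjugate to $i$; in the latter case $k\sim j$''. This is not a dichotomy (every involution is conjugate to $i$), and being conjugate to $i$ does not imply $k\sim j$. More substantively, when $ik$ is \emph{hyperbolic} on the Bass--Serre tree there is no reason for $k$ to lie in $G$ or any $A_1$-conjugate of $G$, and your argument simply does not cover this case. The paper's template handles it with Lemma~\ref{lemme_cas_amalgam} (which applies here since the edge group is $E=\langle i\rangle$): from the axis of $ik$ one extracts a minimal $k'$ with $\langle i,k'\rangle=\mathrm{Stab}(A(ik))$ quasi-malnormal. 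In the elliptic case your argument also slips: you write that quasi-malnormality of $\langle i,k'\rangle$ in $G_1$ follows because a conjugator ``would have to fix the axis $A(ik')$ in $T$'', but $ik'\in G$ is elliptic and has no axis. The correct statement is that $ik'$ (having infinite order) fixes the single vertex $v_G$, so any $g$ with $\lvert g\langle i,k'\rangle g^{-1}\cap\langle i,k'\rangle\rvert>2$ must fix $v_G$, hence $g\in G$, and one concludes by quasi-malnormality in $G$. Finally, the reduction ``$ik$ elliptic $\Rightarrow$ up to $A_1$-conjugacy $k\in G$'' is true here but needs an argument you do not give: $ik$ fixes a unique vertex, which cannot be an $H$-vertex (only one involution there), hence is some $xv_G$; since $i$ fixes every $H$-vertex on the path from $v_G$ to $xv_G$ only when that vertex is in the $A_1$-orbit of $v_H$, one gets $xv_G=a v_G$ for some $a\in A_1$.
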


\begin{prop}\label{free}
Let $G$ be a group in $\mathcal{C}$. Let $H$ be a group containing no $2$-torsion. Define $G_1=G\ast H$. Then $G_1$ belongs to $\mathcal{C}$.
\end{prop}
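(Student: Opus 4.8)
The plan is to verify, for $G_1=G\ast H$, the five conditions of Definition \ref{class} with respect to the three involutions $i,j,s$ of $G$, which remain pairwise distinct in $G_1$ and still satisfy $j=sis$. All the work takes place in the Bass--Serre tree $T$ of the decomposition $G_1=G\ast H$: its vertex stabilizers are the conjugates of $G$ and of $H$, and --- crucially --- its edge stabilizers are trivial. Since $H$ has no $2$-torsion, it contains no involution, so every involution of $G_1$ is a torsion element conjugate into $G$, and each such involution fixes a \emph{unique} vertex of $T$ (if it fixed two, it would fix an edge). In particular $i$ fixes only the vertex $v_0$ with $\mathrm{Stab}(v_0)=G$, so any element commuting with $i$ stabilizes $\{v_0\}$ and hence lies in $G$; thus $C_{G_1}(i)=C_G(i)=A$, and $\mathrm{Orb}_A(j)$ is the same set whether computed in $G$ or in $G_1$. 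Conditions \emph{(1)} and \emph{(2)} then follow by the Bass--Serre argument used for Lemma \ref{pres_amalgam}: every involution of $G_1$ is conjugate into $G$, hence to $i$; and two distinct involutions either fix a common vertex (so lie in a common conjugate of $G$ and generate an infinite dihedral group by \emph{(2)} in $G$) or have disjoint fixed-point sets (so generate an infinite dihedral group by Proposition \ref{dih}).

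For condition \emph{(4)} I invoke Lemma \ref{lemma1}: it suffices to show that no nontrivial $h\in G_1$ centralizes two distinct involutions $i$ and $k$. Such an $h$ lies in $C_{G_1}(i)=A\subseteq G$, which fixes $v_0$. If $k$ fixes $v_0$ as well, then $k\in G$ and we conclude by Lemma \ref{lemma1} inside $G$. Otherwise $\mathrm{Fix}(i)$ and $\mathrm{Fix}(k)$ are disjoint, so $ik$ is hyperbolic (Proposition \ref{dih}); then $h$ centralizes $ik$, hence preserves its axis, and being elliptic it must fix that axis pointwise --- in particular an edge of $T$ --- so $h=1$. Condition \emph{(3)} is handled similarly: if an involution $k$ satisfies $(ij)^m=(ik)^n$ with $m,n\neq 0$, then $(ik)^n=(ij)^m$ is a nontrivial element of $G$, hence fixes $v_0$ and is elliptic; so $ik$ is elliptic, and by Proposition \ref{dih} the involutions $i$ and $k$ have a common fixed vertex, necessarily $v_0$ since $\mathrm{Fix}(i)=\{v_0\}$; thus $k\in G$, and condition \emph{(3)} in $G$ gives $k\in\mathrm{Orb}_A(j)$.

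It remains to establish condition \emph{(5)}, where the main difficulty lies. Let $k\neq i$ be an involution with $k\notin\mathrm{Orb}_A(j)$. If $k$ fixes $v_0$, then $k\in G$, and condition \emph{(5)} in $G$ supplies an involution $k'$ with $\langle i,k\rangle\subseteq\langle i,k'\rangle$, with $k'$ minimal and $\langle i,k'\rangle$ quasi-malnormal; one checks that these properties survive in $G_1$. For minimality: if $\langle i,k'\rangle\subseteq\langle i,\ell\rangle$ for an involution $\ell$, then $ik'\in\langle i,\ell\rangle$ has infinite order and, lying in $G$, is elliptic; as in the proof of \emph{(3)} this forces $\ell$ to fix $v_0$, so $\ell\in G$ and the minimality of $k'$ in $G$ applies. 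For quasi-malnormality: if some conjugate $g\langle i,k'\rangle g^{-1}$ met $\langle i,k'\rangle$ in a subgroup of order $\geq 3$, that subgroup would contain an infinite-order element fixing both $v_0$ and $gv_0$, and triviality of edge stabilizers would give $gv_0=v_0$, i.e.\ $g\in G$, contradicting quasi-malnormality of $\langle i,k'\rangle$ in $G$.

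The truly new case is when $k$ does not fix $v_0$. Then $ik$ is hyperbolic with axis a line $L$ passing through $v_0$ and through $\mathrm{Fix}(k)$, and $i$ reverses $L$. Applying the description of $\mathrm{Stab}(A(g))$ from Subsection \ref{trees} --- and using that the pointwise stabilizer of $L$ is trivial because edge stabilizers are --- one finds $\mathrm{Stab}(L)=\langle g'\rangle\rtimes\langle i\rangle$, an infinite dihedral group, where $g'$ is a hyperbolic element of minimal translation length with axis $L$. Set $k':=ig'$; then $k'$ is an involution, $ik'=g'$, and $\langle i,k'\rangle=\mathrm{Stab}(L)\supseteq\langle i,k\rangle$. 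Now $k'$ is minimal: any $\langle i,\ell\rangle\supseteq\langle i,k'\rangle$ contains the hyperbolic element $g'$, hence is an infinite dihedral group whose translation axis is $L$, so $\langle i,\ell\rangle\subseteq\mathrm{Stab}(L)$ and equality holds. And $\langle i,k'\rangle=\mathrm{Stab}(L)$ is quasi-malnormal: for $g\notin\mathrm{Stab}(L)$ we have $g\mathrm{Stab}(L)g^{-1}=\mathrm{Stab}(gL)$ with $gL\neq L$, and any infinite-order element common to $\mathrm{Stab}(L)$ and $\mathrm{Stab}(gL)$ would translate along both $L$ and $gL$, forcing $L=gL$; since every subgroup of an infinite dihedral group that is not finite of order $\leq 2$ contains an infinite-order element, the intersection has order at most $2$ --- and order $2$ genuinely occurs, because an involution reflecting $L$ may also reflect a translate $gL$ meeting $L$ at its fixed vertex, which is exactly why the definition requires only quasi-malnormality. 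This last case --- exhibiting $k'$ as the reflection realizing $\mathrm{Stab}(L)$ and verifying its quasi-malnormality via the tree structure --- is the crux of the argument; everything else reduces cleanly to the corresponding statement in $G$.
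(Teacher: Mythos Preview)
Your proof is correct and follows essentially the same route as the paper: the paper does not spell out a proof of this proposition but defers to ``a similar manner as the previous results'', meaning Proposition~\ref{prop5} and Lemma~\ref{lemme_cas_amalgam} (with trivial edge group $E$), and your argument reproduces exactly that Bass--Serre analysis --- trivial edge stabilizers force $C_{G_1}(i)=A$, reduce conditions \emph{(3)}, \emph{(4)} and the elliptic part of \emph{(5)} to $G$, and the hyperbolic case of \emph{(5)} is your reconstruction of Lemma~\ref{lemme_cas_amalgam}. The only cosmetic difference is that for quasi-malnormality in the hyperbolic case you argue via uniqueness of the translation axis whereas the paper phrases it via the pair of boundary points, but these are the same statement.
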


\subsection{Stability under increasing sequences}

Last, let us mention an important fact, whose proof is straightforward.

\begin{prop}
Let $(G_i)_{i\in I}$ be an increasing sequence of groups in $\mathcal{C}$. Then the union of the sequence belongs to $\mathcal{C}$.
\end{prop}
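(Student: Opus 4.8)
The plan is to verify the five defining conditions of the class $\mathcal{C}$ (together with the existence of the three involutions $i,j,s$) for the union $G_\infty=\bigcup_{i\in I}G_i$, using that each condition, when instantiated at a particular finite tuple of elements, already holds in some member of the sequence. First I would fix the involutions $i,j,s$ once and for all: since the sequence is increasing, they all lie in some common $G_{i_0}$ (indeed in all sufficiently large members), and the relation $j=sis$ persists in $G_\infty$. The centralizer $A=C_{G_\infty}(i)$ is then the increasing union of the centralizers $A_i=C_{G_i}(i)$, because centralizing is a finitary condition: an element of $G_\infty$ lies in some $G_i$, and it commutes with $i$ in $G_\infty$ if and only if it does so in $G_i$.

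Next I would treat each numbered condition. Condition (1), that all involutions are conjugate: any two involutions $k,\ell\in G_\infty$ lie together in some $G_i$, where they are each conjugate to $i$; the conjugating elements then live in $G_\infty$. Condition (2) is identical in spirit: $\langle k,\ell\rangle$ is a subgroup of some $G_i$ and is infinite dihedral there, hence in $G_\infty$. Conditions (3), (4), (5) require a little more care because they quantify over \emph{all} involutions $k$ of $G_\infty$, but in each case the relevant data is finitary. For (3): if $k\sim j$ in $G_\infty$, the witnessing relation $(ij)^m=(ik)^n$ holds in some $G_i$ containing $i,j,k$, so $k\in\mathrm{Orb}_{A_i}(j)\subseteq\mathrm{Orb}_A(j)$. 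For (4), I would use Lemma \ref{lemma1}: malnormality of $A$ in $G_\infty$ is equivalent to the statement that no nontrivial element centralizes a pair $(i,k)$ of distinct involutions, and if $h$ centralized both, then $h,k\in G_i$ for some $i$, contradicting malnormality of $A_i$ in $G_i$. The one subtlety is that the minimality of an involution, and the property ``$\langle i,k\rangle$ is quasi-malnormal,'' are defined by quantifying over the ambient group; I would observe that these properties can only \emph{improve} upward — more precisely, for condition (5), given $k\notin\mathrm{Orb}_A(j)$, pick $G_i$ containing $i,j,k$ with $k\notin\mathrm{Orb}_{A_i}(j)$ (possible since orbits increase), apply condition (5) in $G_i$ to get $k'\in G_i$ with $\langle i,k\rangle\subseteq\langle i,k'\rangle$, $k'$ minimal in $G_i$, and $\langle i,k'\rangle$ quasi-malnormal in $G_i$, and then upgrade these last two facts to $G_\infty$.

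The main obstacle, then, is exactly this upgrading: showing that if $k'$ is minimal in $G_i$ it remains minimal in $G_\infty$, and similarly for quasi-malnormality. For minimality, suppose $\langle i,k'\rangle\subseteq\langle i,k''\rangle$ for some involution $k''\in G_\infty$; then $k''$ and the finitely many group elements realizing the inclusion live in some $G_\ell\supseteq G_i$, so $\langle i,k'\rangle\subseteq\langle i,k''\rangle$ already in $G_\ell$, and by minimality of $k'$ in $G_\ell$ — which follows from minimality in $G_i$ together with condition (5) applied in $G_\ell$, or more directly from the fact that minimality is preserved under the inclusions $G_i\subseteq G_\ell$ (a point one can extract from the arguments in Section \ref{proofs}) — we get $\langle i,k'\rangle=\langle i,k''\rangle$. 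For quasi-malnormality of $\langle i,k'\rangle$: an element $g\in G_\infty\setminus\langle i,k'\rangle$ and a putative element of order $>2$ in $g\langle i,k'\rangle g^{-1}\cap\langle i,k'\rangle$ again involve only finitely many elements, hence live in some $G_\ell$, where quasi-malnormality of $\langle i,k'\rangle$ holds. Here the delicate point is that $g$ must not already lie in $\langle i,k'\rangle\subseteq G_i$, which is automatic since $g\notin\langle i,k'\rangle$ in $G_\infty$ forces $g\notin\langle i,k'\rangle$ in $G_\ell$. Once all of this finitary bookkeeping is in place, the union satisfies every clause of Definition \ref{class}, so $G_\infty\in\mathcal{C}$, and the proof is complete — this is why the authors describe it as straightforward, though the quantifier analysis for conditions (4) and (5) is the part that genuinely needs to be checked rather than waved through.
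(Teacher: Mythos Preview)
The paper offers no proof of this proposition beyond the assertion that it is ``straightforward,'' so there is nothing to compare against at the level of argument; your finitary approach is the natural one, and your treatment of conditions \emph{(1)}--\emph{(4)} (and of the existence of $i,j,s$) is correct.

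The genuine weak point is your handling of condition \emph{(5)}. You choose $k'$ by applying \emph{(5)} inside some $G_i$, and then need $k'$ to remain minimal and $\langle i,k'\rangle$ to remain quasi-malnormal in every larger $G_\ell$. You acknowledge this is the crux, but your two proposed justifications do not close the gap. Invoking condition \emph{(5)} in $G_\ell$ only produces \emph{some} minimal $k'_\ell\in G_\ell$, not the persistence of minimality for your fixed $k'$; and appealing to Section~\ref{proofs} is not legitimate in this generality, since those arguments establish preservation of minimality and quasi-malnormality only for the \emph{specific} HNN extensions and amalgams of Section~4, not for an arbitrary inclusion $G_i\subseteq G_\ell$ of groups in $\mathcal{C}$. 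In fact the worry is concrete: if $\langle i,k'\rangle$ were strictly contained in some $\langle i,k''\rangle\subseteq G_\ell$, then conjugation by $ik''$ would witness the failure of quasi-malnormality of $\langle i,k'\rangle$ in $G_\ell$, so the two properties fail together, and nothing in Definition~\ref{class} alone prevents this.

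That said, in every increasing sequence the paper actually uses, each step is one of the extensions from Section~4, and the proofs in Section~\ref{proofs} do verify step-by-step that a minimal $k'$ with quasi-malnormal $\langle i,k'\rangle$ in $G$ retains both properties in $G_1$. So for the paper's purposes your sketch is adequate once one says explicitly that the chain is built from those operations and cites the relevant passages of Section~\ref{proofs}; as a proof of the proposition in the unrestricted form stated, however, condition~\emph{(5)} is not yet established.
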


In the next section, we shall prove our main results, namely the construction of simple sharply 2-transitive groups with various properties.

\section{Simple sharply 2-transitive groups}

The following theorem is the first step of the construction.

\begin{te}\label{theo4}
For every group $G\in \mathcal{C}$, there exists a sharply 2-transitive group $\Gamma$ that contains $G$.\end{te}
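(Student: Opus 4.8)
\textbf{Proof plan for Theorem \ref{theo4}.}
The plan is to build $\Gamma$ as the union of an increasing sequence $G=G_0\subset G_1\subset G_2\subset\cdots$ of groups, all lying in $\mathcal{C}$, exactly as in the iterative scheme sketched in the introduction for \cite{RT}, but now staying inside the more robust class $\mathcal{C}$. By Lemma \ref{lemme02}, to make $\Gamma$ act sharply 2-transitively on $I_\Gamma$ by conjugation it suffices to arrange, in the union, that (1) all involutions are conjugate, (2) no nontrivial element centralizes two distinct involutions, and (3) the centralizer $A=C_\Gamma(i)$ acts transitively on $I_\Gamma\setminus\{i\}$. Conditions (1) and (2) are guaranteed for every group in $\mathcal{C}$: (1) is condition \emph{(1)} of Definition \ref{class}, and (2) follows from condition \emph{(4)} (malnormality of $A$) via Lemma \ref{lemma1}. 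Since $\mathcal{C}$ is stable under increasing unions (Proposition \ref{...}, the last proposition of Section \ref{section_class}), it is enough to ensure that $\Gamma\in\mathcal{C}$ and that transitivity in (3) holds in the limit. Transitivity is itself equivalent to: for every involution $k\neq i$ of $\Gamma$, $k\in\mathrm{Orb}_A(j)$.

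The iteration is a bookkeeping argument. First I would fix a well-ordering of all "requests": pairs $(H,k)$ where $H$ is one of the countably-or-more groups appearing along the way and $k$ is an involution of $H$ distinct from $i$ that is not yet in $\mathrm{Orb}_{C_H(i)}(j)$. At a successor stage, given $G_n\in\mathcal{C}$ and the next unmet request $k\in I_{G_n}\setminus\mathrm{Orb}_A(j)$, I would use condition \emph{(5)} of Definition \ref{class} to replace $k$ by a minimal involution $k'$ with $\langle i,k\rangle\subset\langle i,k'\rangle$ and $\langle i,k'\rangle$ quasi-malnormal, and then set
\[G_{n+1}=\langle G_n,\,t\ \vert\ tit^{-1}=i,\ tjt^{-1}=k'\rangle.\]
This HNN extension is legitimate because $\langle i,j\rangle$ and $\langle i,k'\rangle$ are both infinite dihedral (condition \emph{(2)}), so the subgroups $\langle i,j\rangle$ and $\langle i,k'\rangle$ are isomorphic via an isomorphism fixing $i$ and sending $j\mapsto k'$. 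By Proposition \ref{jesaispas0} (which requires exactly that $k'$ be minimal and outside $\mathrm{Orb}_A(j)$), we have $G_{n+1}\in\mathcal{C}$. Moreover in $G_{n+1}$ the conjugator $t$ moves $j$ into the orbit of $k'$ under $C_{G_{n+1}}(i)$, and since $\langle i,k\rangle\subset\langle i,k'\rangle$ the original involution $k$ is in $\mathrm{Orb}(j)$ in $G_{n+1}$ as well (using Lemma \ref{lemme0} to locate, inside $\langle i,j\rangle$, the preimage of $k$ under $t$, and then tracking it with an element of $C_{G_0}(i)$ — this is the place where the "$k$ minimal" precaution from \cite{RT} was needed and where condition \emph{(5)} now does the work). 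At limit stages take unions, which remain in $\mathcal{C}$.

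The main point to verify is that the bookkeeping actually catches every involution: each HNN extension $G_n\rightsquigarrow G_{n+1}$ creates new involutions (all conjugate to $i$ by condition \emph{(1)}, but many not yet in $\mathrm{Orb}_A(j)$), so one must interleave the enumeration so that every involution of $\Gamma=\bigcup_n G_n$ that arises at any finite stage gets its request served at some later stage; a standard dovetailing/transfinite-recursion argument handles this, and since each $G_{n+1}$ has the same cardinality as $G_n$ when $G$ is infinite, the process closes off after $|G|$ (or $\omega$, if $G$ is countable — here $G$ is necessarily infinite) many steps. Then in $\Gamma$, every involution $\neq i$ lies in $\mathrm{Orb}_{C_\Gamma(i)}(j)$, so $C_\Gamma(i)$ is transitive on $I_\Gamma\setminus\{i\}$; combined with conditions (1)–(2), which hold because $\Gamma\in\mathcal{C}$, Lemma \ref{lemme02} gives that $\Gamma$ acts sharply 2-transitively on $I_\Gamma$ by conjugation. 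The one genuinely delicate step is the transfer "$k\in\mathrm{Orb}(j)$ in $G_{n+1}$": it is not immediate because the new orbit of $j$ is computed with respect to the \emph{larger} centralizer $C_{G_{n+1}}(i)$, and one must check that no collapse has occurred — i.e., that $A$ is still malnormal and in particular $C_{G_{n+1}}(i)=\langle C_{G_n}(i),t\rangle$ behaves as expected; but this is precisely the content of Proposition \ref{jesaispas0}, so invoking it carefully is all that is required.
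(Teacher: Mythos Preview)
Your proposal is correct and follows essentially the same route as the paper: iterate Proposition~\ref{jesaispas0} (replacing each outstanding involution $k$ by a minimal $k'$ via condition~\emph{(5)}, then HNN-extending), take the union, and verify Lemma~\ref{lemme02}. Two small points worth tightening: first, the transfer ``$k\in\mathrm{Orb}_{A_{n+1}}(j)$'' is cleaner if you invoke condition~\emph{(3)} of Definition~\ref{class} directly rather than Lemma~\ref{lemme0} --- once $tjt^{-1}=k'$ you have $k\sim k'\sim t j t^{-1}$, hence $t^{-1}kt\sim j$, and condition~\emph{(3)} in $G_n$ puts $t^{-1}kt$ (and thus $k$) in the $A_{n+1}$-orbit of $j$; Lemma~\ref{lemme0} is about roots and is not quite the tool you want here. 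Second, to apply Proposition~\ref{jesaispas0} you need $k'\notin\mathrm{Orb}_{A_n}(j)$, not just $k\notin\mathrm{Orb}_{A_n}(j)$; this is automatic (if $k'=aja^{-1}$ then $\langle i,k\rangle\subset a\langle i,j\rangle a^{-1}$ forces $k\sim j$, contradiction via condition~\emph{(3)}), but it deserves a sentence.
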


\begin{proof}If $G$ is already sharply 2-transitive, there is nothing to do. From now on, assume that $G$ is not a sharply 2-transitive group. Set $G_0=G$ and $A_0=A$. Let $k_1,k_2,k_3,\ldots$ be an enumeration of the minimal involutions of $G$ (as defined in the second paragraph of Section \ref{section_class}) that are not in the orbit of $j$ under the action of $A$ by conjugation, and suppose that they are pairwise non equivalent (which means that $ig_n$ and $ig_m$ have no non-trivial common powers if $n\neq m$). Note that if $G$ is uncountable, we cannot index our elements by natural numbers, but the proof is exactly the same and we assume that $G$ is countable for simplicity. Define $G_0^1\subset G_0^1\subset \cdots$ by using Proposition \ref{jesaispas0} repeatedly, that is \[G_0^1=\langle G_0, t_1 \ \vert \ t_1it_1^{-1}=i, \ t_1jt_1^{-1}=k_1\rangle,\]
\[G_0^2=\langle G_0^1, t_2 \ \vert \ t_2it_2^{-1}=i, \ t_2jt_2^{-1}=k_2\rangle,\]and so on. Let $G_1$ be the union of this increasing sequence of groups, and let $A_1$ be the stabilizer of $i$ in this group. Then, iterate this construction to get an increasing sequence $G_0\subset G_1\subset G_2\subset \cdots$ and let $\Gamma$ be the union. We also define $A_{\Gamma}$ to be the centralizer of $i$ in $\Gamma$. We shall prove that $\Gamma$ is a sharply 2-transitive group by applying Lemma \ref{lemme02} to the action of $\Gamma$ by conjugation on the set $I_{\Gamma}$ of its involutions. Recall that we have to prove that the following three conditions hold:
\begin{enumerate}
    \item the action is transitive (i.e.\ $I_{\Gamma}$ is a conjugacy class);
    \item stabilizers of pairs of distinct points are trivial, i.e.\ every non-trivial element $\gamma\in\Gamma$ centralizes at most one involution;
    \item $A_{\Gamma}=C_{\Gamma}(i)$ acts transitively on $I_{\Gamma}\setminus \lbrace i\rbrace$.
\end{enumerate}

First, it is clear that involutions are conjugate to each other in $\Gamma$ since they are conjugate in every $G_n$ (condition \emph{(1)} in the definition of $\mathcal{C}$).

Then, let $\gamma\in\Gamma $ be an element that centralizes $i$ and $k\neq i$. There is an integer $n$ such that $i,k,\gamma$ belong to $G_n$. Note that $i$ is the only involution in $A_n=C_{G_n}(i)$ since two distinct involutions in $G_n$ generate an infinite dihedral group, hence $k$ does not belong to $A_n$. By Lemma \ref{lemma1}, since $A_n$ is malnormal in $G_n$, $\gamma$ is trivial.

Last, note that for every integer $n$, the centralizer $A_{n+1}$ of $i$ in $G_{n+1}$ acts transitively on the set of involutions contained in $G_n\setminus \lbrace i\rbrace$: indeed, if an involution $k\in G_n\setminus \lbrace i \rbrace$ is not in the orbit of $j$ under $A_n$, then there is a minimal involution $k'$ such that $\langle i,k\rangle \subset \langle i,k'\rangle$ (and in particular $k$ and $k'$ are equivalent), and by construction $k'$ is in the orbit of $j$ under $A_{n+1}$, i.e.\ there is an element $a\in A_{n+1}$ such that $ak'a^{-1}=j$. In particular, $ak'a^{-1}$ is equivalent to $j$. As $k$ is equivalent to $k'$, by transitivity $aka^{-1}$ is equivalent to $j$. Hence, by the third condition in the definition of the class $\mathcal{C}$, there exists an element $a'\in A_{n+1}$ such that $(a'a)k(a'a)^{-1}=j$. It follows that $A_{\Gamma}=C_{\Gamma}(i)$ acts transitively on $I_{\Gamma}$ minus $i$. 

Hence, $\Gamma$ acts sharply 2-transitively on $I_{\Gamma}$.\end{proof}

We are ready to prove Theorem \ref{theo2}.

\begin{te}\label{theo2b}Let $G$ be a sharply 2-transitive group of characteristic 0. There exists a simple sharply 2-transitive group $\Gamma$ that contains $G$, that has the same cardinality as $G$, and such that every element of $\Gamma$ is a translation or a product of two translations (and hence every element is a product of four involutions). \end{te}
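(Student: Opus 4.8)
The strategy is to build $\Gamma$ as an increasing union of groups in $\mathcal{C}$, starting from $G_0 = G$, where at each stage we perform one of the three types of enlargement for which we have proved that $\mathcal{C}$ is stable: (a) the HNN extensions of Proposition~\ref{jesaispas0}, used to make $C(i)$ act transitively on the involutions (following the construction in the proof of Theorem~\ref{theo4}); (b) the amalgam $G \ast_{i=a} H$ of Proposition~\ref{coro1}, used to turn the fixed involution $i$ into a product of two translations; and (c) the combination of Proposition~\ref{coro2} and Proposition~\ref{jesaispas}, used to turn any given element of order $>2$ into a product of two translations. We run all of these in an interleaved fashion so that, in the limit, the union $\Gamma$ is sharply $2$-transitive (by the argument of Theorem~\ref{theo4}), and \emph{every} element of $\Gamma$ is a translation or a product of two translations. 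By Lemma~\ref{simple}, this last property forces $\Gamma$ to be simple, since $\langle T_\Gamma\rangle = \Gamma$. Cardinality is controlled because each elementary step adds at most $|G_n|$ generators, so $|\Gamma| = |G|$ when $G$ is infinite (and $G$ is necessarily infinite in characteristic $0$).

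\textbf{Bookkeeping of the enumeration.} The delicate point is to organize the construction so that nothing is missed and so that each step really is legal (i.e.\ the hypotheses of the relevant proposition hold at the moment we apply it). We maintain, at stage $n$, a group $G_n \in \mathcal{C}$ containing $G$, together with a to-do list. For the transitivity requirement we proceed exactly as in Theorem~\ref{theo4}: enumerate the minimal involutions of $G_n$ not in $\mathrm{Orb}_{A_n}(j)$ and kill them off one at a time with Proposition~\ref{jesaispas0}. For the ``products of translations'' requirement, we enumerate the elements $g$ of $G_n$: if $g$ has order $\le 2$ it is already an involution (or trivial), hence vacuously fine; if $g$ has order $m > 2$, we first amalgamate with the auxiliary group $H_m$ of Proposition~\ref{coro2}, obtaining $G_n \ast_{i=a} H_m \in \mathcal{C}$ in which the distinguished element $h$ of $H_m$ satisfies the three hypotheses of Proposition~\ref{jesaispas} with respect to $g$; then we apply Proposition~\ref{jesaispas} with the stable letter $z$ satisfying $zgz^{-1} = h$, landing back in $\mathcal{C}$. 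In $H_m$ the element $h$ is visibly a product of two translations (for $H_{2n-1}$ and $H_\infty$ one has $h = (abcd)^{-1}$ with $ab, cd$ translations; for $H_{2n}$ similarly, using the extra generator $x$ to keep a single conjugacy class of involutions), hence so is $g = z^{-1} h z$, since being a product of two translations is a conjugation-invariant property (all translations are conjugate, and a conjugate of a translation is a translation). Finally one application of Proposition~\ref{coro1} (amalgamating with the group $H$ of that proposition) makes $i$ itself a product of two translations, and since all involutions are conjugate, so is every involution of the final group. We interleave all three kinds of steps by a standard diagonal enumeration over $\bigcup_n G_n$ so that every element ever created is eventually processed, and set $\Gamma = \bigcup_n G_n$.

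\textbf{Why the limit works, and the main obstacle.} Since $\mathcal{C}$ is stable under increasing unions, $\Gamma \in \mathcal{C}$. The verification that $\Gamma$ acts sharply $2$-transitively on $I_\Gamma$ is word-for-word the argument in the proof of Theorem~\ref{theo4}: conjugacy of involutions and triviality of centralizers of pairs are immediate from membership in $\mathcal{C}$ (conditions \emph{(1)} and \emph{(4)}, via Lemma~\ref{lemma1}), and transitivity of $C_\Gamma(i)$ on $I_\Gamma \setminus \{i\}$ holds because every minimal non-$\mathrm{Orb}_A(j)$ involution was eventually sent into the orbit of $j$. For simplicity: every element of $\Gamma$ lies in some $G_n$ and was processed at a later stage, so it is a translation or a product of two translations; thus $\langle T_\Gamma \rangle = \Gamma$ and Lemma~\ref{simple} applies (the characteristic is $0 \neq 2$, since the translations have infinite order — this is inherited from $G$, or one checks directly that $\Gamma \in \mathcal{C}$ forces infinite-order translations). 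The main obstacle, and the only place requiring genuine care, is checking that at the moment we invoke Proposition~\ref{jesaispas} the hypotheses on $g$ and $h$ are met \emph{in the current group} $G_n \ast_{i=a} H_m$, not merely in $H_m$; this is precisely what the ``Moreover'' clause of Proposition~\ref{coro2} provides, so the argument reduces to citing that clause with $g$ the element we wish to process. A secondary subtlety is the cardinality count: one must note that the auxiliary groups $H, H_m$ are countable, so the total number of generators added over all stages is $|G|$ when $G$ is infinite. Assembling these observations gives the theorem.
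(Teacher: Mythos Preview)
Your proposal is correct and follows essentially the same approach as the paper. The only organizational difference is that the paper, at each outer stage, first takes a free product with $\mathbb{Z}$ and then invokes Theorem~\ref{theo4} as a black box to restore sharp $2$-transitivity before processing the next element, whereas you interleave the transitivity steps (Proposition~\ref{jesaispas0}) directly with the amalgam/HNN steps via a diagonal enumeration; both schemes lead to the same limit and the same verification.
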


\begin{proof}Consider $G_1=G\ast \mathbb{Z}$. This group is in the class $\mathcal{C}$ by Proposition \ref{free}. Let $T_1$ denote the set of translations of $G_1$, and let $g_1,g_2,g_3,\ldots$ be an enumeration of the elements of $G_1$ that are not in $T_1^2$. As a first step, we will construct a sharply 2-transitive group $G_1^1$ that contains $G_1$ and such that $g_1$ is a product of two translations in $G_1^1$. We distinguish two cases.

\emph{First case}. If $g_1$ has order $2$, consider the group $G_1\ast_{i=a}H$ given by Corollary \ref{coro1}. In this group, $i=a$ is a product of two translations by definition of $H$. Moreover, this group belongs to the class $\mathcal{C}$ by Corollary \ref{coro1}. Hence, once can apply Theorem \ref{theo4} above to get a sharply 2-transitive group $G_1^1$ that contains $G_1\ast_{i=a}H$ (and thus $G_1$) and such that $g_1$ is a product of two translations in $G_1^1$.

\emph{Second case}. If $g_1$ has order $n>2$, consider the group $G_1\ast_{i=a}H_n$ given by Corollary \ref{coro2}. In this group, we have a new element $h$ of order $n$ which is a product of two translations by definition of $H_n$, and such that $g_1$ and $h$ satisfy the assumptions of Proposition \ref{jesaispas}. Since the group $G_1\ast_{i=a}H_n$ lies in the class $\mathcal{C}$ by Corollary \ref{coro2}, one can apply Proposition \ref{jesaispas} in order to get an HNN extension of $G_1\ast_{i=a}H_n$ in which $g_1$ and $h$ are conjugate, and that lies in $\mathcal{C}$. Note that in this HNN extension, the element $g_1$ is now a product of two translations. Hence, once can apply Theorem \ref{theo4} to get a sharply 2-transitive group $G_1^1$ that contains $G_1\ast_{i=a}H$ (and thus $G_1$) and such that $g_1$ is a product of two translations in $G_1^1$.

Then, take $G_1^1\ast\mathbb{Z}$ and construct a sharply-2-transitive group $G_1^2$ in which $g_1,g_2$ are products of two translations, and so on. We get an increasing sequence $G_1^1\subset G_1^2\subset \cdots$ of sharply 2-transitive groups. Let $G_2$ be the union of this increasing sequence. Observe that this group is sharply 2-transitive as an increasing union of sharply 2-transitive groups of characteristic 0, and that every element of $G_1$ can be written as a product of two translations in $G_2$.

We iterate this construction and get an increasing sequence $G_1\subset G_2\subset \cdots$. Let $\Gamma$ be the union of this sequence. This group is sharply 2-transitive and every element of $\Gamma$ is a translation or a product of two translations in $\Gamma$.

Last, note that $\Gamma$ is simple, as an immediate consequence of Proposition \ref{simple}.
\end{proof}

Then, the construction of the group $\Gamma$ mentioned in Remark \ref{theo3} is an easy adaptation of the previous proof, see below.

\begin{te}\label{theo3b}Let $G$ be a sharply 2-transitive group of characteristic 0. Let $H$ be a group without $2$-torsion. Then there exists a simple sharply 2-transitive group $\Gamma$ such that the following three conditions hold:
\begin{enumerate}
    \item $\Gamma$ contains $G$;
    \item every element of $\Gamma$ is a translation or a product of two translations, and hence every element is a product of four involutions;
    \item for every $i\in I_{\Gamma}$, $\mathrm{C}_{\Gamma}(i)$ contains a subgroup isomorphic to $H$.
\end{enumerate}
\end{te}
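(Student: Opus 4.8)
The plan is to mimic the proof of Theorem \ref{theo2b} almost verbatim, feeding the extra group $H$ into the construction at the right point so that it ends up inside the centralizer of $i$, while preserving membership in $\mathcal{C}$ at every stage. First I would replace the starting group: instead of $G_1 = G \ast \mathbb{Z}$, set $G_1 = G \ast_i (\langle i\rangle \times H)$, which lies in $\mathcal{C}$ by Proposition \ref{i} (this uses that $H$ has no $2$-torsion). Note that $H$ now sits inside $C_{G_1}(i)$, and hence inside $C_{\Gamma}(i)$ for the final group, since all the subsequent operations only enlarge the group.

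Next I would run exactly the same bootstrapping as in Theorem \ref{theo2b}: enumerate the elements not yet products of two translations, and for each one apply either Corollary \ref{coro1} (if it is an involution) or Corollary \ref{coro2} together with Proposition \ref{jesaispas} (if it has order $n>2$) to arrange that it becomes a product of two translations, then apply Theorem \ref{theo4} to re-saturate to a sharply $2$-transitive group; at each stage where we would have taken a free product with $\mathbb{Z}$ to re-enter $\mathcal{C}$ we instead take a free product with $\mathbb{Z}$ via Proposition \ref{free} (the group $H$ has already been incorporated and does not need to be re-added). Taking the increasing union gives a sharply $2$-transitive group $\Gamma$ of characteristic $0$ in which every element is a translation or a product of two translations, hence simple by Lemma \ref{simple}, and $\Gamma$ contains $G$ and contains $H$ inside $C_{\Gamma}(i)$. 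Finally, since all involutions of $\Gamma$ are conjugate (condition \emph{(1)} of $\mathcal{C}$, preserved throughout), for an arbitrary involution $i'\in I_{\Gamma}$ we have $i' = \gamma i \gamma^{-1}$ for some $\gamma\in\Gamma$, so $C_{\Gamma}(i') = \gamma\, C_{\Gamma}(i)\, \gamma^{-1}$ contains $\gamma H \gamma^{-1}\cong H$; this gives condition \emph{(3)} for every $i'\in I_{\Gamma}$.

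The only real point to check is that inserting $H$ at the start does not break anything downstream — but it does not, because every later step in the construction of Theorem \ref{theo2b} is a statement about an arbitrary group in $\mathcal{C}$, and Proposition \ref{i} guarantees our modified $G_1$ is in $\mathcal{C}$. The cardinality bookkeeping also goes through: since $H$ has no $2$-torsion it must be infinite or trivial, and in any case the union construction produces a group of cardinality $\max(|G|,|H|,\aleph_0)$, so there is no size obstruction (and one can state the cardinality conclusion accordingly if desired). The main obstacle, such as it is, is purely organizational: one must be careful that the first free-product-with-$\mathbb{Z}$ step in the original proof is the one being replaced by the free product with $\langle i\rangle\times H$, and that all \emph{subsequent} re-entries into $\mathcal{C}$ still use Proposition \ref{free} with $\mathbb{Z}$ — there is no need, and no benefit, to re-insert $H$ more than once.

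\begin{proof}
Set $G_1 = G \ast_i (\langle i\rangle \times H)$; this group belongs to $\mathcal{C}$ by Proposition \ref{i}, and $H$ is a subgroup of $C_{G_1}(i) = A$. Now run the construction of Theorem \ref{theo2b} starting from $G_1$ instead of from $G\ast\mathbb{Z}$, replacing each subsequent free product with $\mathbb{Z}$ (used only to re-enter $\mathcal{C}$ before applying Corollary \ref{coro1} or Corollary \ref{coro2}) again by a free product with $\mathbb{Z}$, which keeps the group in $\mathcal{C}$ by Proposition \ref{free}. As in the proof of Theorem \ref{theo2b}, taking the increasing union yields a sharply $2$-transitive group $\Gamma$ of characteristic $0$ containing $G_1$ (hence $G$), in which every element is a translation or a product of two translations; in particular $\Gamma$ is generated by $T_{\Gamma}$, so $\Gamma$ is simple by Lemma \ref{simple}. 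This establishes conditions \emph{(1)} and \emph{(2)}.

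For condition \emph{(3)}, fix $i'\in I_{\Gamma}$. Since all involutions of $\Gamma$ are conjugate, there is $\gamma\in\Gamma$ with $i' = \gamma i\gamma^{-1}$, and therefore $\mathrm{C}_{\Gamma}(i') = \gamma\,\mathrm{C}_{\Gamma}(i)\,\gamma^{-1}$. As $H \subseteq \mathrm{C}_{G_1}(i) \subseteq \mathrm{C}_{\Gamma}(i)$, the subgroup $\gamma H \gamma^{-1}$ of $\mathrm{C}_{\Gamma}(i')$ is isomorphic to $H$.
\end{proof}
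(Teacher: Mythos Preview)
Your proposal is correct and essentially identical to the paper's own proof, which sets $G_1=(G\ast \mathbb{Z})\ast_{\langle i\rangle}(\langle i\rangle\times H)$, invokes Propositions \ref{free} and \ref{i} to see that $G_1\in\mathcal{C}$, and then says ``the proof is exactly the same as above.'' The only cosmetic difference is that you omit the harmless initial $\ast\,\mathbb{Z}$ step (and slightly mischaracterize its purpose --- it is not needed to ``re-enter $\mathcal{C}$''), while you usefully spell out the conjugacy argument for condition \emph{(3)}, which the paper leaves implicit.
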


\begin{proof}
Consider $G_1=(G\ast \mathbb{Z})\ast_{\langle i\rangle}(\langle i\rangle\times H)$. This group is in the class $\mathcal{C}$ by Propositions \ref{free} and \ref{i}. Then the proof is exactly the same as above.
\end{proof}

Last, we prove Theorem \ref{existence2}.

\begin{te}\label{existence2b}For every odd integer $n\geq 1$, there exists a sharply 2-transitive group with a simple normal subgroup of index $n$.\end{te}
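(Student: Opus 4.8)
The plan is to build, for each odd $n\geq 1$, a sharply 2-transitive group $\Gamma_n$ of characteristic $0$ together with a simple normal subgroup $\Gamma$ of index $n$. The natural strategy is to take $\Gamma_n$ to be a sharply 2-transitive group whose ``translation subgroup'' $\langle T_{\Gamma_n}\rangle$ has index exactly $n$, because Lemma \ref{simple} tells us that $\langle T_{\Gamma_n}\rangle$ is simple as soon as it is itself sharply 2-transitive — or, more precisely, the proof of Lemma \ref{simple} shows that $\langle T_G\rangle$ is contained in every nontrivial normal subgroup of any sharply 2-transitive group $G$ of characteristic $\neq 2$, so $\langle T_{\Gamma_n}\rangle$ is the unique minimal normal subgroup and is simple whenever it is nontrivial and proper-or-equal. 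The point is therefore to control the index $[\Gamma_n:\langle T_{\Gamma_n}\rangle]$.

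First I would observe that, for $G=\mathrm{AGL}_1(K)=K\rtimes K^{*}$ with $K$ a field of characteristic $0$, the subgroup generated by the translations is exactly $K\rtimes (K^{*})^2$, so the index $[G:\langle T_G\rangle]$ equals $[K^{*}:(K^{*})^2]$; choosing $K$ with a prescribed square-class group (for instance a suitable number field, or $\mathbb{Q}_p$, or a function field) one can arrange this index to be any prescribed value, in particular an odd prime. More robustly, the construction in the proof of Theorem \ref{theo2b} builds $\Gamma$ as an increasing union of sharply 2-transitive groups $G\subset G_1\subset G_2\subset\cdots$ in the class $\mathcal{C}$, and each enlargement step (free product with $\mathbb{Z}$, amalgamation with the groups $H$, $H_m$ of Corollaries \ref{coro1} and \ref{coro2}, the HNN extensions of Propositions \ref{jesaispas0} and \ref{jesaispas}, Theorem \ref{theo4}) either adds new elements that are already products of translations or adds the letter $t$ from Proposition \ref{jesaispas0}, which lies outside $\langle T\rangle$. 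So I would run the construction of Theorem \ref{theo2b} but starting from a group $G$ with $[G:\langle T_G\rangle]=n$ and being careful to make all the \emph{new} generators introduced at every step (the stable letters $t_\ell$, $z$, and the letters in $H_m$) products of translations, exactly as is done for the elements $g_\ell$ in the proof of Theorem \ref{theo2b}. The outcome is a sharply 2-transitive group $\Gamma_n$ of characteristic $0$ in which every element is a translation or a product of two translations \emph{modulo} $G$, i.e.\ $\Gamma_n=\langle T_{\Gamma_n}\rangle\cdot G$ and $\langle T_{\Gamma_n}\rangle\cap G=\langle T_G\rangle$ by a congruence-subgroup / retraction argument, giving $[\Gamma_n:\langle T_{\Gamma_n}\rangle]=[G:\langle T_G\rangle]=n$.

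The subgroup $N=\langle T_{\Gamma_n}\rangle$ is normal in $\Gamma_n$ (it is generated by a conjugation-invariant set), has index $n$, and is nontrivial; by the argument in the proof of Lemma \ref{simple} it is contained in every nontrivial normal subgroup of $\Gamma_n$, hence it is the unique minimal normal subgroup of $\Gamma_n$, and in particular $N$ is characteristically simple. To upgrade this to simplicity of $N$ I would note that $N$ itself acts sharply 2-transitively on $I_{\Gamma_n}=I_N$: transitivity of $N$ on $I_N$ holds because involutions are conjugate in $\Gamma_n$ and any two involutions are conjugate by an involution (characteristic $\neq 2$), which lies in $N$ since an involution equals $1\cdot i$ up to a translation; sharpness of the pair stabilizers is inherited from $\Gamma_n$; and transitivity of the point stabilizer $C_N(i)=C_{\Gamma_n}(i)\cap N$ on $I_N\setminus\{i\}$ follows from the fact that in $\Gamma_n$ we can pass between any two involutions $\neq i$ by an element of $C_{\Gamma_n}(i)$ which can be taken in $N$ after multiplying by a suitable translation centralizing $i$. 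Then Lemma \ref{simple} applied to $N$ (which is generated by its translations by construction) yields that $N$ is simple.

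The main obstacle I anticipate is the bookkeeping that keeps $[\Gamma_n:N]$ from either collapsing to $1$ or growing: one must verify that the amalgamations with $H_m$ and the HNN letters $z$, $t_\ell$ do not accidentally merge the square-class coset structure of $G$, i.e.\ that $\Gamma_n$ retracts onto $G/\langle T_G\rangle\cong \mathbb{Z}/n\mathbb{Z}$ compatibly with the translation subgroup at every stage. This is exactly where oddness of $n$ is used: the presentations in Corollaries \ref{coro1} and \ref{coro2} involve elements of order $2$ and of order $2n$ whose behaviour modulo $\langle T\rangle$ is delicate, and the hypothesis that $n$ is odd is precisely what lets one route all the auxiliary generators into $N$ and define the retraction $\Gamma_n\twoheadrightarrow\mathbb{Z}/n\mathbb{Z}$ killing $N$ — for even $n$ the involution sitting inside a cyclic group of order $2n$ (the case $H_{2n}$) creates a parity clash, which is why the statement is restricted to odd $n$. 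Everything else is the routine verification, already carried out in the proofs of Theorems \ref{theo4} and \ref{theo2b}, that the relevant groups lie in $\mathcal{C}$.
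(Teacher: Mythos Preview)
Your proposal rests on a computation that is simply wrong. In $G=\mathrm{AGL}_1(K)=K\rtimes K^{*}$ the involutions are exactly the elements $(a,-1)$ with $a\in K$, and $(a,-1)(b,-1)=(a-b,1)$. Hence $T_G=\{(c,1):c\neq 0\}$ and $\langle T_G\rangle=K\rtimes\{1\}$, the whole additive group; the index $[G:\langle T_G\rangle]$ equals $\lvert K^{*}\rvert$, not $[K^{*}:(K^{*})^2]$, and is infinite for every infinite field. So there is no choice of $K$ giving a starting group with $[G:\langle T_G\rangle]=n$, and the ``preserve the initial finite index via a retraction'' strategy never gets off the ground. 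The subsequent claim $\langle T_{\Gamma_n}\rangle\cap G=\langle T_G\rangle$ is likewise unjustified: nothing in the construction provides a homomorphism $\Gamma_n\to G/\langle T_G\rangle$.

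The paper takes a genuinely different route. One starts from $G_0=\mathrm{AGL}_1(\mathbb{Q}(\zeta_n))$, so that elements of order $n$ are available, and then reruns the construction of Theorem~\ref{theo2b} with two changes: one \emph{refrains} from conjugating the elements lying in cyclic subgroups of order $n$ to products of translations, and one adds further HNN letters making all cyclic subgroups of order $n$ conjugate. Everything else is, as before, turned into a product of two translations. In the quotient $\Gamma/N$ with $N=\langle T_\Gamma\rangle$, all elements outside the order-$n$ cyclics die, the conjugating letters die too so those cyclics are identified, and one obtains $\mathbb{Z}/n\mathbb{Z}$. The oddness of $n$ is used only so that involutions (order $2\neq n$) are \emph{not} among the excluded elements and hence lie in $N$; your explanation via the $H_{2n}$ presentation misses the point.

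Finally, your simplicity argument for $N$ breaks at the step ``multiplying by a suitable translation centralizing $i$'': by Lemma~\ref{lemme_translation} the centralizer $A=C_\Gamma(i)$ contains no translation at all, so you cannot adjust an element of $C_{\Gamma_n}(i)$ into $N$ this way. Transitivity of $C_N(i)$ on $I_N\setminus\{i\}$ requires a different justification.
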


\begin{proof}Let $\zeta_n$ be an $n$th root of unity, and let $K=\mathbb{Q}(\zeta_n)$. Consider the groups $G_0=\mathrm{AGL}_1(K)$ and $G_1=G_0\ast\mathbb{Z}$. We proceed as in the proofs of the previous theorems, but we make the following changes:
\begin{itemize}
    \item we add new letters throughout the construction in order to ensure that the cyclic subgroups of order $n$ are conjugate to each other;
    \item we do not conjugate the elements of the cyclic subgroups of order $n$ to products of translations.
\end{itemize}
As in the previous proofs, let $\Gamma$ be the group obtained at the end of the construction, and define $N=\langle T_{\Gamma}\rangle$, the subgroup generated by the translations. This subgroup is normal since $T_{\Gamma}$ is a conjugacy class. Then, observe that killing $N$ gives rise to a finite quotient of order $n$: by construction, every element that does not belong to a cyclic subgroup of order $n$ is killed in $\Gamma/N$ as it is a product of two translations. In particular, the letters that conjugate the cyclic subgroups of order $n$ to each other are killed, and thus all these groups are identified in the quotient. In addition, no element of order $n$ is killed in the quotient, by construction.\end{proof}

\begin{rque}
If we slightly modify the previous proof and do not conjugate the involution $i$ to a product of translations, one gets a quotient of order $2n$.
\end{rque}

\section{Proof of Propositions \ref{jesaispas0} to \ref{coro2}}\label{proofs}

We shall prove that $\mathcal{C}$ is stable under certain HNN extensions and amalgamated free products.

\subsection{Stability of $\mathcal{C}$ under HNN extensions}

We begin by proving two lemmas.

\begin{lemme}\label{lemme}
Let $G$ be a group in $\mathcal{C}$. Let $H,K$ be two isomorphic subgroups of $G$, let $\alpha:H\rightarrow K$ be an isomorphism, and let $G_1=G_{\alpha}=\langle G,t \ \vert \ tht^{-1}=\alpha(h), \ \forall h\in H\rangle$. Suppose that $K$ is quasi-malnormal in $G_1$, and that for every $g\in G$, $\vert gHg^{-1}\cap K\vert \leq 2$.
Then, for $x\in G$, the following assertions hold.
\begin{itemize}
\item If $x$ is not contained in a conjugate of $H$ or $K$, then $\lbrace g\in G_1 \ \vert \ gxg^{-1}\in G\rbrace$ is contained in $G$.
\item If $x$ belongs to $H$ and $x$ has order $>2$, then $\lbrace g\in G_1 \ \vert \ gxg^{-1}\in H\rbrace$ is contained in $G$. 
\item If $x$ belongs to $K$ and $x$ has order $>2$, then $\lbrace g\in G_1 \ \vert \ gxg^{-1}\in K\rbrace$ is contained in $tGt^{-1}$. 
\end{itemize}
In all the above cases, the normalizer of $\langle x\rangle$ in $G_1$ and the centralizer of $x$ in $G_1$ are elliptic in the Bass-Serre tree of the splitting of $G_1$. Moreover, if $x$ does not belong to a conjugate of $K$ and $x$ has order $>2$, then the normalizer of $\langle x\rangle$ in $G_1$ and the centralizer of $x$ in $G_1$ are contained in $G$. 
\end{lemme}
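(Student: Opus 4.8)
The plan is to study $G_1=G_\alpha$ through its action on the Bass--Serre tree $T$ of the HNN splitting, exactly as set up in Subsection~\ref{trees}. Recall that the vertex stabilisers of $T$ are the $G_1$-conjugates of $G$ and the edge stabilisers are the $G_1$-conjugates of $H$, equivalently of $K$ (since $K=tHt^{-1}$). Fix the base vertex $v_0$ with $\mathrm{Stab}(v_0)=G$, the base edge $e_0$ at $v_0$ with $\mathrm{Stab}(e_0)=H$ (so $e_0$ joins $v_0$ to $t^{-1}v_0$), and $e_1:=te_0$, the edge at $v_0$ with $\mathrm{Stab}(e_1)=K$ (joining $v_0$ to $tv_0$). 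The edges incident to $v_0$ come in two families: those with stabiliser a $G$-conjugate of $H$ (I will call them of \emph{type $H$}; these are the $gH$, $g\in G$) and those with stabiliser a $G$-conjugate of $K$ (of \emph{type $K$}; the $gtH$, $g\in G$); moreover every edge of $T$ is of type $H$ seen from one endpoint and of type $K$ seen from the other. Since $x\in G$ it is elliptic, so $\mathrm{Fix}_T(x)$ is a non-empty subtree containing $v_0$.

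The technical core is a local statement at $v_0$ that I would extract first from the two smallness hypotheses. If $u\in G$ has order $>2$, then: (i) $u$ cannot lie simultaneously in $H$ and in a $G$-conjugate $gKg^{-1}$, for then $u\in H\cap gKg^{-1}$ and $|H\cap gKg^{-1}|=|g^{-1}Hg\cap K|\le 2$ by hypothesis; symmetrically $u$ cannot lie in $K$ and in a $G$-conjugate of $H$; and (ii) $u$ cannot lie in $K$ and in $gKg^{-1}$ with $g\in G\setminus K$, since $K$ is quasi-malnormal in $G_1$, hence in $G$. From (i)--(ii) I would deduce: for $x\in H$ of order $>2$, the element $x$ fixes $e_0$ and only edges of type $H$ at $v_0$; and at the far endpoint $w$ of any type-$H$ edge $f$ fixed by $x$ --- where $f$ is of type $K$ --- translating $w$ to $v_0$ turns $x$ into an order-$>2$ element of $K$, which by (i)--(ii) fixes no edge at $v_0$ other than the image of $f$. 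Hence $\mathrm{Fix}_T(x)$ is either the single edge $e_0$, or a genuine star with centre $v_0$ (all edges incident to $v_0$, of type $H$, with the far endpoints as leaves). I expect this star-structure claim to be the only step requiring real care; the rest is formal.

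Granting this, the three bulleted assertions follow. For the first: if $x$ lies in no $G$-conjugate of $H$ or of $K$, then $x$ fixes no edge at $v_0$, so $\mathrm{Fix}_T(x)=\{v_0\}$; thus $gxg^{-1}\in G=\mathrm{Stab}(v_0)$ forces $g^{-1}v_0\in\mathrm{Fix}_T(x)=\{v_0\}$, i.e.\ $g\in G$. For the second: let $x\in H$ have order $>2$ and let $g\in G_1$ satisfy $gxg^{-1}\in H=\mathrm{Stab}(e_0)$. If $\mathrm{Fix}_T(x)=\{e_0\}$ then $g^{-1}e_0\in\mathrm{Fix}_T(x)$ gives $g\in\mathrm{Stab}(e_0)=H\subseteq G$. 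Otherwise $\mathrm{Fix}_T(x)$ is a star with centre $v_0$, so $g\cdot\mathrm{Fix}_T(x)=\mathrm{Fix}_T(gxg^{-1})$ is a star with centre $gv_0$ and, since $gxg^{-1}\in H$ fixes $e_0$, the edge $e_0$ belongs to this star, hence is incident to $gv_0$; thus $gv_0\in\{v_0,t^{-1}v_0\}$. The case $gv_0=t^{-1}v_0$ gives $g=t^{-1}g'$ with $g'\in G$, so $gxg^{-1}=t^{-1}(g'xg'^{-1})t\in H$; by Britton's lemma this forces $g'xg'^{-1}\in K$, whence $g'xg'^{-1}\in g'Hg'^{-1}\cap K$, a group of order $\le 2$ --- contradicting $|x|>2$. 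So $gv_0=v_0$, i.e.\ $g\in G$. The third assertion reduces to the second: for $x\in K$ of order $>2$, set $x'=t^{-1}xt\in H$; then $gxg^{-1}\in K$ is equivalent to $(t^{-1}gt)x'(t^{-1}gt)^{-1}\in H$, so the second assertion yields $t^{-1}gt\in G$, i.e.\ $g\in tGt^{-1}$.

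For the remaining claims: $N_{G_1}(\langle x\rangle)$ and $C_{G_1}(x)$ are both contained in $\{g\in G_1:gxg^{-1}\in\langle x\rangle\}$, and $\langle x\rangle$ lies in $G$, in $H$, or in $K$ in the three cases respectively, so the bullets give $N_{G_1}(\langle x\rangle),C_{G_1}(x)\subseteq G$ in the first two cases and $\subseteq tGt^{-1}$ in the third; each is a vertex stabiliser, hence elliptic. Finally, if $|x|>2$ and $x$ lies in no $G$-conjugate of $K$, then either $x$ also lies in no $G$-conjugate of $H$ --- the first bullet, giving $\subseteq G$ --- or $x=g_0ug_0^{-1}$ with $g_0\in G$ and $u\in H$ of order $>2$, in which case $N_{G_1}(\langle x\rangle)=g_0N_{G_1}(\langle u\rangle)g_0^{-1}\subseteq g_0Gg_0^{-1}=G$ by the second bullet, and likewise for the centraliser.
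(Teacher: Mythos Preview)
Your argument is correct. The first and third bullets and the concluding statements about normalisers and centralisers match the paper's treatment. The genuine difference is in the second bullet. The paper argues with reduced normal forms: writing $g=g_0t^{\varepsilon_1}g_1\cdots t^{\varepsilon_{n+1}}g_{n+1}$ in reduced form and assuming $gxg^{-1}\in H$, one shows that the innermost syllable $t^{\varepsilon_{n+1}}g_{n+1}xg_{n+1}^{-1}t^{-\varepsilon_{n+1}}$ must pinch with $\varepsilon_{n+1}=1$ (using $|gHg^{-1}\cap K|\le 2$), lands in $K$, and then quasi-malnormality of $K$ forces the next letter $g_n$ into $K$, contradicting reducedness. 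Your approach instead determines the precise shape of $\mathrm{Fix}_T(x)$ as a star of type-$H$ edges centred at $v_0$, and then compares centres. Both routes invoke the two smallness hypotheses at exactly the same places; your version makes the tree geometry explicit and avoids the normal-form bookkeeping, at the cost of first establishing the star-structure claim. One small remark: in the case $gv_0=t^{-1}v_0$ you do not really need Britton's lemma --- the relation $tHt^{-1}=K$ already gives $g'xg'^{-1}\in K$ directly from $t^{-1}(g'xg'^{-1})t\in H$.
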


\begin{proof}
\emph{First case.} The claim can be proved easily by means of reduced normal forms. Alternatively, let us give a geometrical argument. If $x$ is not contained in a conjugate of $H$ or $K$ by an element of $G$, then consider the Bass-Serre tree corresponding to the HNN extension, and let $v$ be the unique vertex fixed by $G$. Since $x$ is not contained in an edge group, $v$ is the unique vertex fixed by $x$. Now, if $g\in G_1$ is such that $gxg^{-1}$ lies in $G$, then $gxg^{-1}\cdot v= v$ and thus $x \cdot (g^{-1}\cdot v)=g^{-1}\cdot v$. By uniqueness of the fixed point of $x$, one gets $g^{-1}\cdot v =v$. Hence $g$ belongs to $G$. 

\emph{Second case.} Suppose that $x$ belongs to $H$. Let $g\in G_1$ be an element such that $gxg^{-1}\in H$. Suppose towards a contradiction that $g\notin G$ and consider a reduced form $g=g_0t^{\varepsilon_1}g_1\cdots t^{\varepsilon_{n+1}}g_{n+1}$, with $n\geq 1$, $g_i\in G$, $\varepsilon_i=\pm 1$, and for $1\leq i\leq n$, if $\varepsilon_i=-\varepsilon_{i+1}=1$ then $g_i\notin H$ and if $\varepsilon_i=-\varepsilon_{i+1}=-1$ then $g_i\notin K$. Since $gxg^{-1}$ belongs to $H$ by assumption, a simplification occurs necessarily at $t^{\varepsilon_{n+1}}g_{n+1}xg_{n+1}^{-1}t^{-\varepsilon_{n+1}}$. Recall that $x$ belongs to $H$, has order at least 3, and that $\vert g_{n+1}Hg_{n+1}^{-1}\cap K\vert \leq 2$. As a consequence, $g_{n+1}xg_{n+1}^{-1}$ is in $H$, $\varepsilon_{n+1}=1$ and $t^{\varepsilon_{n+1}}g_{n+1}xg_{n+1}^{-1}t^{-\varepsilon_{n+1}}$ belongs to $K$. The same argument shows that $g_nt^{\varepsilon_{n+1}}g_{n+1}xg_{n+1}^{-1}t^{-\varepsilon_{n+1}}g_n^{-1}$ belongs to $K$ and that $\varepsilon_{n}=-1$. Moreover, since $K$ is quasi-malnormal, $g_n$ lies in $K$. Hence the reduced form of $g$ contains the sequence $t^{-1}g_nt$ with $g\in K$, which contradicts the fact that the form is reduced. Hence $n=1$ and $g=g_0tg_1$. One gets $tg_1xg_1^{-1}t^{-1}\in g_0^{-1}Hg_0$, which is not possible since $g_1xg_1^{-1}$ belongs to $H$, $tg_1xg_1^{-1}t^{-1}$ belongs to $K$, and $g_0^{-1}Hg_0\cap K$ has order at most 2.

\emph{Third case.} Suppose that $x$ belongs to $K$. Then the result follows immediately from the second case since $K=tHt^{-1}$.\end{proof}

\begin{lemme}\label{lemme2}Let $G$ be a group in $\mathcal{C}$. Let $H,K$ be two isomorphic subgroups of $G$, let $\alpha:H\rightarrow K$ be an isomorphism, and let $G_1=G_{\alpha}=\langle G,t \ \vert \ tht^{-1}=\alpha(h), \ \forall h\in H\rangle$. Suppose that $K$ is quasi-malnormal in $G_1$, and that for every $g\in G$, $\vert gHg^{-1}\cap K\vert \leq 2$. Moreover, suppose that $H$ and $K$ are isomorphic to $\mathbb{Z}/n\mathbb{Z}$ for $n>2$, or to $\mathbb{Z}$, or to the infinite dihedral group $D_{\infty}$. Let $k\in G_1$ be an involution. If $ik$ acts hyperbolically on the Bass-Serre tree of the splitting of $G_1$, then there exists an involution $k'\neq i$ in $G_1$ such that the following three conditions are satisfied (compare with condition \emph{(5)} in the definition of the class $\mathcal{C}$):
\begin{enumerate}
        \item[(a)] $\langle i,k\rangle\subset \langle i,k'\rangle$ (in particular, $k\sim k'$);
        \item[(b)] $k'$ is minimal;
        \item[(c)] $\langle i,k'\rangle$ is quasi-malnormal.
\end{enumerate}
In particular, note that $C_{G_1}(ik)=C_{G_1}(ik')=\langle ik'\rangle$ and $N_{G_1}(\langle i, k\rangle)=N_{G_1}(\langle i, k'\rangle)=\langle i, k'\rangle$.
\end{lemme}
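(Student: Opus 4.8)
The plan is to analyze the hyperbolic element $ik$ via its action on the Bass-Serre tree $T$ of the HNN splitting $G_1 = G_\alpha$, using the structural Proposition on the stabilizer of an axis quoted in Subsection~\ref{trees}. Since $i$ and $k$ are involutions and $ik$ is hyperbolic, the subgroup $\langle i,k\rangle$ is infinite dihedral and both $i$ and $k$ swap the two endpoints $\{(ik)_-,(ik)_+\}$ of the axis $A(ik)$; hence $\langle i,k\rangle \subset \mathrm{Stab}(A(ik))$ and in fact $\langle i,k\rangle \subset \mathrm{Stab}^+(A(ik))\rtimes\langle i\rangle$ in the notation of that proposition (the index-two subgroup fixing each end, extended by an involution swapping them). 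Because $T$ is simplicial, the quoted proposition gives $\mathrm{Stab}^+(A(ik)) = S \rtimes \langle g'\rangle$ with $S$ the pointwise stabilizer of the axis and $g'$ a hyperbolic element of minimal translation length with $A(g')=A(ik)$.

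\smallskip

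First I would show that $S$ is trivial, so that $\mathrm{Stab}^+(A(ik)) = \langle g'\rangle$ is infinite cyclic. Any element of $S$ fixes the whole line $A(ik)$ pointwise, hence fixes at least two vertices, hence lies in a vertex stabilizer that contains the (infinitely many) edge stabilizers along the axis; an element fixing two distinct vertices of $T$ is elliptic and commutes with nothing forcing it to be nontrivial — more precisely, in the HNN extension $G_1$ the pointwise stabilizer of an edge is a conjugate of $H$ or $K$, which by hypothesis is isomorphic to $\mathbb{Z}/n\mathbb{Z}$, $\mathbb{Z}$, or $D_\infty$, and such a group cannot fix a bi-infinite line while being stabilized by a translation of the line unless it is trivial (a nontrivial element of $S$ would lie in the intersection of all edge groups along $A(ik)$, which, combined with quasi-malnormality of $K$ and the bound $|gHg^{-1}\cap K|\le 2$, forces it into an element of order $\le 2$; but such an involution would then commute with $ik$, contradicting condition~\emph{(4)} of Definition~\ref{class} via Lemma~\ref{lemma1}, since it would centralize both $i$ and its conjugate $ik\cdot i\cdot(ik)^{-1}$). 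Granting $S=\{1\}$, set $g' $ as above; since $i$ swaps the two ends, conjugation by $i$ inverts $g'$, so $ig'$ is an involution. Define $k' = ig'$. Then $\langle i,k'\rangle = \langle i, g'\rangle = \langle g'\rangle\rtimes\langle i\rangle = \mathrm{Stab}^+(A(ik))\rtimes\langle i\rangle$, which contains $\langle i,k\rangle$; this gives (a), and in particular $k\sim k'$ because $ik$ and $ik'=g'$ are both hyperbolic along the same axis hence have a common power.

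\smallskip

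Next I would verify (b) and (c). Minimality of $k'$: if $\langle i,k'\rangle \subset \langle i, k''\rangle$ for some involution $k''$, then $A(ik'')=A(ik')=A(ik)$ and $ik''$ is a hyperbolic element of this axis, so $ik'' \in \mathrm{Stab}^+(A(ik)) = \langle g'\rangle = \langle ik'\rangle$, whence $\langle i,k''\rangle = \langle i,k'\rangle$ by minimality of the translation length of $g'$. Quasi-malnormality of $\langle i,k'\rangle$: take $g\in G_1\setminus\langle i,k'\rangle$ and suppose $g\langle i,k'\rangle g^{-1}\cap \langle i,k'\rangle$ is nontrivial; an infinite-order element of this intersection is a power of the translation $g'$, and $g$ conjugating $\langle g'\rangle$ to itself must normalize it, hence $g$ stabilizes $A(g')=A(ik)$, i.e.\ $g\in \mathrm{Stab}(A(ik)) = \langle i,k'\rangle$ — contradiction; so the intersection consists of finite-order elements, i.e.\ of involutions, and there is at most one nontrivial such (the unique involution in the dihedral group $\langle i,k'\rangle$ is not available to an outside conjugate unless it is central, which it is not, so the intersection has order $\le 2$). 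Finally, the ``in particular'' clause is immediate: $C_{G_1}(ik)$ normalizes $\langle ik\rangle$, hence stabilizes $A(ik)$, hence lies in $\mathrm{Stab}(A(ik))$; an element centralizing the hyperbolic $ik$ cannot swap the two ends, so it lies in $\mathrm{Stab}^+(A(ik))=\langle g'\rangle=\langle ik'\rangle$, and conversely $\langle ik'\rangle$ centralizes $ik$ since both are powers within the cyclic $\langle g'\rangle$; the statement $N_{G_1}(\langle i,k\rangle)=\langle i,k'\rangle$ follows the same way from the identification $N_{G_1}(\langle i,k\rangle)=\mathrm{Stab}(A(ik))$.

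\smallskip

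\emph{Main obstacle.} The delicate point is the proof that $S=\{1\}$, i.e.\ that no nontrivial element fixes the entire axis $A(ik)$ pointwise. This is where the full strength of the hypotheses on $H$, $K$ (quasi-malnormality of $K$ in $G_1$, the bound $|gHg^{-1}\cap K|\le 2$, and the constraint that $H,K \cong \mathbb{Z}/n\mathbb{Z}$ with $n>2$, $\mathbb{Z}$, or $D_\infty$) must be used, together with Lemma~\ref{lemme} on which elements can be conjugated into vertex or edge groups; one must trace carefully how an element lying in all edge stabilizers along an infinite axis is constrained, and then invoke Lemma~\ref{lemma1} to rule out the surviving possibility of an involution commuting with $ik$. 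Everything else is a fairly mechanical consequence of the axis-stabilizer proposition.
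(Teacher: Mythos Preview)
Your overall approach is the same as the paper's: decompose $\mathrm{Stab}(A(ik))$ as $(S\rtimes\langle g'\rangle)\rtimes\langle i\rangle$, show $S=\{1\}$, set $k'=ig'$, and read off (a)--(c) from the minimality of the translation length of $g'$ and the identification $\mathrm{Stab}(A(ik))=\langle i,k'\rangle$. The verifications of (a), (b), (c) and of the ``in particular'' clause are fine and match the paper.

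There is one genuine gap, precisely at the point you flag as the main obstacle. In the case where $S=\langle s\rangle$ has order $2$, you write that ``such an involution would then commute with $ik$, \dots\ since it would centralize both $i$ and its conjugate $ik\cdot i\cdot(ik)^{-1}$.'' But from ``$s$ commutes with $ik$'' you cannot deduce that $s$ centralizes $i$; that implication is simply missing. The correct argument (the one in the paper) is shorter: since $i\in\mathrm{Stab}(A(ik))$ and $S$ is normal in $\mathrm{Stab}(A(ik))$, the involution $i$ normalizes $S$; as $|S|=2$ this forces $isi=s$, so $i$ and $s$ are two \emph{distinct} commuting involutions (distinct because $i$ inverts $ik$ while $s$ centralizes it), contradicting condition~\emph{(2)} of Definition~\ref{class}. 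If you prefer to invoke condition~\emph{(4)} via Lemma~\ref{lemma1} instead, note that the same normality argument applied to $k\in\mathrm{Stab}(A(ik))$ gives $[k,s]=1$ as well, so $s$ centralizes the pair $(i,k)$.

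For the case $|S|>2$, your sketch (intersecting edge groups along the axis and using the bound $|gHg^{-1}\cap K|\le2$) can be made to work, but the paper's route is cleaner and is the one your own reference to Lemma~\ref{lemme} points to: $S$ lies in a conjugate of $H$ or $K$, hence (using the hypothesis $H,K\cong\mathbb{Z}/n\mathbb{Z}$, $\mathbb{Z}$, or $D_\infty$) contains an element $s$ of order $\ge 3$ generating a subgroup characteristic in $S$; then $ik$ normalizes $\langle s\rangle$, yet $ik$ is hyperbolic while Lemma~\ref{lemme} says $N_{G_1}(\langle s\rangle)$ is elliptic --- contradiction.
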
 

\begin{proof}
Let $g=ik$. By assumption, this element is hyperbolic, and hence it fixes a pair of point $P=\lbrace p_{-}, p_{+}\rbrace$ in the boundary at infinity of the Bass-Serre tree (see Subsection \ref{trees}). Let us denote by $\mathrm{Stab}^{+}(A(g))$ the stabilizer of the axis $A(g)$ of $g$ that does not reverse the orientation, i.e.\ the set of elements of $G_1$ that fixes $p_{-}$ and $p_{+}$. This group $\mathrm{Stab}^{+}(A(g))$ decomposes as a semi-direct product $S\rtimes \langle t\rangle$ where $S$ denotes the pointwise stabilizer of the axis, and $t$ denotes an element acting on $A(g)$ as a translation, with minimal translation length (but beware that, a priori, $t$ does not belong to what we called the set of translations $T=I^{2}\setminus \langle 1\rangle$; in fact we shall prove that $t$ belongs to $T$). Denote by $\mathrm{Stab}(A(g))$ the stabilizer of $A(g)$. Observe that $\mathrm{Stab}^{+}(A(g))$ has index 2 in $\mathrm{Stab}(A(g))$. One deduces from the equality $igi=g^{-1}$ that $i$ swaps $p_{-}$ and $p_{+}$, i.e.\ that it reverses the orientation of $A(g)$, and thus one has $\mathrm{Stab}(A(g))=(S\rtimes \langle t\rangle)\rtimes \langle i\rangle$.

First, let us prove that $S$ is the trivial group. Note that $S$ is contained in a conjugate of $H$ or $K$ (because all edge groups in the Bass-Serre are conjugates of $H$ or $K$). Suppose towards a contradiction that $S$ is not trivial. We distinguish two cases.

\emph{First case.} Suppose that $S=\langle s\rangle$ for some involution $s\neq i$. In this case, we have $\mathrm{Stab}(A(g))=(\langle s\rangle \times \langle t\rangle)\rtimes \langle i\rangle$. Hence $i$ commutes with $s$, which contradicts the fact that any two distinct involutions of $G$ generate an infinite dihedral group.

\emph{Second case.} Suppose that $\vert S\vert > 2$. Recall that $S$ is contained in a conjugate of $H$ or $K$, and these groups are isomorphic to $\mathbb{Z}/n\mathbb{Z}$ for $n>2$, or to $\mathbb{Z}$, or to the infinite dihedral group $D_{\infty}$. Hence, $S$ contains an element $s$ of order at least 3. As $g$ normalizes $S$, it normalizes $\langle s\rangle$. But $g$ is hyperbolic, contradicting the fact that the normalizer of $\langle s\rangle$ in $G_1$ is elliptic in the Bass-Serre tree (by Lemma \ref{lemme}).

Hence, $S$ is trivial and $\mathrm{Stab}^{+}(A(g))$ is the infinite cyclic group $\langle t\rangle$. Let us prove that $t$ is a translation of the form $ik'$ for some involution $k'$ in $G_1$. There exists an integer $n\in\mathbb{Z}^{\ast}$ such that $g=ik=t^n$. Note that $it^n=k$ and thus $(it^n)^2=1$, i.e.\ $(iti)^n=t^{-n}$. It follows that $iti=t^{-1}$ since two elements of an infinite cyclic groups with the same $n$th power are equal. We see now that $(it)^2=1$. Since $it$ is non-trivial (as $i$ has order 2 and $t$ has infinite order), it has order 2. Let $it=k'$, so that $t=ik'$. 

We proved that $\mathrm{Stab}^{+}(A(g))=\langle ik'\rangle$ and $\mathrm{Stab}(A(g))=\langle ik'\rangle\rtimes \langle i\rangle=\langle i,k'\rangle$. Last, let us check that conditions \emph{(a)}, \emph{(b)} and \emph{(c)} hold.

\emph{Condition (a)}. Obviously, $g=ik$ belongs to $\mathrm{Stab}(A(g))=\langle i,k'\rangle$, and thus $\langle i,k\rangle$ is contained $\langle i,k'\rangle$. In particular, $ik$ is a power of $ik'$, which shows that $k$ and $k'$ are equivalent.

\emph{Condition (b)}. Let $\ell\in G_1$ be an involution such that $\langle i,k'\rangle\subset \langle i,\ell\rangle$ and prove that $\langle i,k'\rangle=\langle i,\ell\rangle$. Note that $i\ell$ acts hyperbolically on the Bass-Serre tree and that, necessarily, $i\ell$ and $ik'$ have the same axis. The inclusion $\langle i,k'\rangle\supset\langle i,\ell\rangle$ follows from the fact that $t=ik'$ has minimal translation length (by definition of $t$).

\emph{Condition (c)}. We shall prove that $\langle i,k'\rangle$ is quasi-malnormal in $G_1$. We proved above that $\langle i,k'\rangle$ coincides with the stabilizer of the axis of $A(ik)$, denoted by $\mathrm{Stab}(A(ik))$. This group is the stabilizer of a unique pair of points $P$ in the boundary at infinity of the Bass-Serre tree. If the intersection $g\mathrm{Stab}(A(ik))g^{-1}\cap \mathrm{Stab}(A(ik))$ is nontrivial for some $g\in G_1$, either it is infinite or it has order 2 (as $\mathrm{Stab}(A(ik))$ is dihedral infinite). If this intersection is infinite, the infinite cyclic group $g\langle ik'\rangle g^{-1}$ has a finite index subgroup that fixes the pair of points $P$. It follows that $g ik' g^{-1}$ fixes $P$ as well, i.e.\ that $ik'$ fixes $g(P)$. Since $P$ is unique, one has $g(P)=P$, and thus $g$ belongs to $\mathrm{Stab}(P)=\mathrm{Stab}(A(ik))=\langle i,k'\rangle$. Hence, $\langle i,k'\rangle$ is quasi-malnormal. Note however that this group is not necessarily malnormal \emph{a priori}: indeed, for every $g\in C_{G_1}(i)$, the involution $i$ is contained in the intersection of $\mathrm{Stab}(A(ik))$ and $g\mathrm{Stab}(A(ik))g^{-1}$, and these two sets are distinct in general (in other words, the axes of $ik$ and $gikg^{-1}$ intersect at a unique point which is fixed by $i$).\end{proof}


We now prove Proposition \ref{jesaispas0}.

\begin{prop}\label{jesaispas0b}
Let $G$ be a group in $\mathcal{C}$. Let $k\in G$ be an involution distinct from $i$. Suppose that $k$ does not belong to $\mathrm{Orb}_A(j)$ and that $k$ is minimal. Define \[G_1=\langle G,t \ \vert \ tit^{-1}=i, \ tjt^{-1}=k\rangle\] Then $G_1$ belongs to $\mathcal{C}$.
\end{prop}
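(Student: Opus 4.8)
The plan is to verify the five defining conditions of the class $\mathcal{C}$ for the HNN extension $G_1=\langle G,t\mid tit^{-1}=i,\ tjt^{-1}=k\rangle$, using the action of $G_1$ on the Bass-Serre tree $T$ of this splitting, whose edge groups are conjugates of the infinite dihedral group $H=\langle i,j\rangle$ (associated subgroup $K=\langle i,k\rangle$, and $\alpha:H\to K$ the isomorphism with $\alpha(i)=i$, $\alpha(j)=k$). First I would record the geometric set-up: $i$ is elliptic, fixing the whole tree? No --- $i$ lies in both $H$ and $K$, so $i$ fixes a pair of adjacent edges; more importantly $t$ is hyperbolic with an axis, and any involution of $G_1$ is elliptic (finite order). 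Conditions \emph{(1)} and \emph{(2)} --- all involutions conjugate, any two distinct involutions generate $D_\infty$ --- are immediate from Lemma \ref{pres_HNN}, since $G_1$ is an HNN extension of $G\in\mathcal{C}$.

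Next I would treat the core malnormality condition \emph{(4)}, that $A_1:=C_{G_1}(i)$ is malnormal in $G_1$; by Lemma \ref{lemma1} it suffices to show no nontrivial element of $G_1$ centralizes two distinct involutions. Suppose $\gamma\in G_1$ centralizes involutions $i$ and $\ell$ with $\ell\neq i$. Then $i\ell$ is a translation (by condition \emph{(2)}, which holds in $G_1$), and $\gamma$ centralizes $i\ell$, so $\gamma\in C_{G_1}(i\ell)$, while also $\gamma i\gamma^{-1}=i$ means $\gamma$ normalizes $\langle i,\ell\rangle$. Here I split on whether $i\ell$ acts elliptically or hyperbolically on $T$. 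If $i\ell$ is elliptic, then $i$ and $\ell$ have a common fixed vertex, so both lie in a conjugate $gGg^{-1}$; since $i$ is in every edge group incident to the vertex fixed by $G$, a short argument pins the relevant vertex down to a $G$-translate where $i$ sits, and then malnormality of $A$ in $G$ (condition \emph{(4)} for $G$) forces $\gamma\in gGg^{-1}$ and hence $\gamma=1$ by Lemma \ref{lemma1} applied in $G$. If $i\ell$ is hyperbolic, I invoke Lemma \ref{lemme2} (its hypotheses --- $K=\langle i,k\rangle$ quasi-malnormal in $G_1$, and $|gHg^{-1}\cap K|\le 2$ for all $g\in G$ --- must be checked: the first uses minimality of $k$ plus that $k\notin\mathrm{Orb}_A(j)$; the second uses that $H,K$ are distinct $D_\infty$'s with $k\not\sim j$, so any common subgroup is bounded by an involution, which must be $i$ in both, giving order $\le 2$): Lemma \ref{lemme2} gives $C_{G_1}(i\ell)=\langle i\ell'\rangle$ cyclic and $N_{G_1}(\langle i,\ell\rangle)=\langle i,\ell'\rangle$ dihedral, so $\gamma\in\langle i,\ell'\rangle$; since $\gamma$ centralizes $i$ and $i$ inverts $i\ell'$, the only such $\gamma$ are $1$ and $i$, but $i\neq 1$ does not centralize $\ell$ (they generate $D_\infty$), so $\gamma=1$.

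With \emph{(4)} in hand, conditions \emph{(3)} and \emph{(5)} are handled together as follows. Let $\ell\neq i$ be an involution with $\ell\sim j$, i.e.\ $(i\ell)^m=(ij)^n$ for some nonzero $m,n$; I must show $\ell\in\mathrm{Orb}_{A_1}(j)$ --- this is \emph{(3)}. Since $(ij)^n$ is a nontrivial power of the edge-group translation $ij$, it is elliptic, so $i\ell$ is elliptic, hence $i,\ell$ share a fixed vertex. A normal-form / Bass-Serre argument (the "First case" style of Lemma \ref{lemme}) shows the pair $(i,\ell)$ can be conjugated into $G$ by an element of $A_1$: indeed $\langle i,\ell\rangle$ stabilizes a vertex, $i$ fixes exactly the $G$-translate of the base vertex where $i$ lives (using that $i$ appears in edge stabilizers), and any element conjugating this configuration back into $G$ while fixing $i$ lies in $A_1$; then condition \emph{(3)} in $G$ finishes it. For \emph{(5)}, take an involution $\ell\neq i$ with $\ell\notin\mathrm{Orb}_{A_1}(j)$. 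If $i\ell$ is hyperbolic, Lemma \ref{lemme2} directly produces the required minimal quasi-malnormal $\ell'$ with $\langle i,\ell\rangle\subset\langle i,\ell'\rangle$. If $i\ell$ is elliptic, the previous paragraph's argument conjugates $(i,\ell)$ by some $a\in A_1$ into $G$, so $a\ell a^{-1}\notin\mathrm{Orb}_A(j)$ (else $\ell\in\mathrm{Orb}_{A_1}(j)$); applying condition \emph{(5)} in $G$ yields a minimal quasi-malnormal-in-$G$ involution $\ell''$ with $\langle i,a\ell a^{-1}\rangle\subset\langle i,\ell''\rangle$, and setting $\ell'=a^{-1}\ell''a$ transports everything back --- here one must check minimality and quasi-malnormality survive in the larger group $G_1$, which again follows from Lemma \ref{lemme2}/Lemma \ref{lemme} applied to $i\ell'$ (if $i\ell'$ is now hyperbolic in $G_1$, re-run Lemma \ref{lemme2}; if still elliptic, quasi-malnormality of $\langle i,\ell''\rangle$ in $G$ must be promoted to $G_1$ via the malnormality structure around the $G$-vertex). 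The main obstacle, and the step I expect to absorb most of the work, is precisely this promotion of the "fifth condition" data across the HNN extension --- reconciling the elliptic case (where one pulls back from $G$) with the hyperbolic case (where one builds $\ell'$ from the axis via Lemma \ref{lemme2}) and checking that quasi-malnormality of $\langle i,\ell'\rangle$ genuinely holds in $G_1$, not merely in $G$; this is where the hypotheses that $k$ is minimal and $k\notin\mathrm{Orb}_A(j)$ are essential, as they are exactly what make $K=\langle i,k\rangle$ quasi-malnormal in $G_1$ and hence make Lemma \ref{lemme2} applicable.
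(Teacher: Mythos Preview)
Your overall architecture matches the paper's proof: verify \emph{(1)}--\emph{(2)} via Lemma~\ref{pres_HNN}, then split each of \emph{(3)}, \emph{(4)}, \emph{(5)} into the elliptic/hyperbolic dichotomy for the relevant translation $i\ell$, using Lemma~\ref{lemme2} in the hyperbolic case and pulling back to $G$ in the elliptic case. You correctly identify that the elliptic case of \emph{(5)} is the delicate one; the paper handles it via an explicit claim (that $\langle i,\ell\rangle$ lies in an edge group iff $\ell\in\mathrm{Orb}_{A_1}(j)$) followed by a careful analysis of the fixed-point set of $i$ in the Bass--Serre tree to produce the conjugating element $a\in A_1$. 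Your sketch of this step is vague but pointed in the right direction.

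There is, however, a genuine gap in how you verify the hypotheses of Lemmas~\ref{lemme} and~\ref{lemme2}, specifically the bound $|gHg^{-1}\cap K|\le 2$ for all $g\in G$. Your justification --- ``$H,K$ are distinct $D_\infty$'s with $k\not\sim j$, so any common subgroup is bounded by an involution'' --- does not work as stated: the relation $k\not\sim j$ controls $H\cap K$, but says nothing about $gHg^{-1}\cap K$ for general $g$. If that intersection were infinite you would get $g(ij)^m g^{-1}=(ik)^n$ for some nonzero $m,n$, and this is \emph{not} an instance of the $\sim$-relation (which is defined relative to the fixed involution $i$, and conjugation by $g$ moves $i$). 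The paper's argument is different and more subtle: since $K=\langle i,k\rangle$ is quasi-malnormal in $G$ (by condition~\emph{(5)} in $G$, using minimality of $k$), every infinite-order element of $K$ has only finitely many roots in $G$; on the other hand every infinite-order element of $gHg^{-1}$ has $n$th roots for all $n$ by Lemma~\ref{lemme0}. These are incompatible, so the intersection has no infinite-order element, hence order $\le 2$.

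A smaller point: for condition~\emph{(3)} the paper's route is cleaner than yours. Rather than trying to conjugate the pair $(i,\ell)$ into $G$ by an element of $A_1$, one simply observes that $i\ell$ centralizes $x=(ij)^m\in H$, and Lemma~\ref{lemme} (once its hypotheses are established as above) gives $C_{G_1}(x)\subset G$ directly, so $\ell\in G$ and condition~\emph{(3)} in $G$ finishes. The $A_1$-conjugation argument you outline is only needed in the elliptic case of \emph{(5)}.
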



\begin{proof}
Conditions \emph{(1)} and \emph{(2)} are satisfied by $G_1$, by Lemma \ref{pres_HNN}.

Before proving that condition \emph{(3)} holds in $G_1$, let us check that the assumptions of Lemma \ref{lemme} are satisfied. Define $H=\langle i,j\rangle$ and $K=\langle i,k\rangle$. By condition \emph{(5)} in $G$, since $k$ does not belong to $\mathrm{Orb}_A(j)$ and since $k$ is minimal, $K$ is quasi-malnormal in $G$. Moreover, we will prove that $\vert gHg^{-1}\cap K\vert \leq 2$ for every $g\in G$. First, observe that elements of $K$ have finitely many roots; more precisely, if $x\in K$ has infinite order, then there exists an integer $n_x$ such that $x$ has no $n$th root in $G$ for $n\geq n_x$ (indeed, if $x_n$ is an $n$th root of $x$, then $x$ belongs to $x_nKx_n^{-1}\cap H$ and thus $x_n$ belongs to $K$, as $K$ is quasi-malnormal in $G$). In contrast, note that each element of $gHg^{-1}$ of infinite order has an $n$th root for every $n>0$, by Lemma \ref{lemme0}. It follows that the intersection $gHg^{-1}\cap K$ cannot be infinite, and thus $\vert gHg^{-1}\cap K\vert \leq 2$. Hence Lemma \ref{lemme} applies and tells us that for every $x\in H$ of infinite order, the centralizer of $x$ in $G_1$ is contained in $G$.

Now, let us prove condition \emph{(3)} in $G_1$. For any involution $k\in I$, if $k\leq j$, then by definition there exist $m,n\in\mathbb{Z}\setminus\lbrace 0\rbrace$ such that $(ij)^m=(ik)^n$. Hence $ik$ centralizes $x=(ij)^m$, and as a consequence $ik$ belongs to $G$. Therefore $k$ belongs to $G$ and condition \emph{(3)}, which holds in $G$, ensures the existence of an element $a\in A\subset A_1$ such that $k=aja^{-1}$.


Let us prove condition \emph{(4)}, namely that $A_1=C_{G_1}(i)$ is malnormal in $G_1$. We will prove that for any involution $k\neq i$, if $k$ does not belong to $A_1$ then the intersection of $C_{G_1}(k)$ and $C_{G_1}(i)$ is trivial, or equivalently that the intersection of $C_{G_1}(ik)$ and $C_{G_1}(i)$ is trivial. So let $g$ be an element of $C_{G_1}(k)\cap C_{G_1}(i)$, and let us prove that $g=1$. We distinguish two cases: the case where $ik$ acts hyperbolically on the Bass-Serre tree of the splitting of $G_1$ as an HNN extension, and the case where $ik$ acts elliptically on the Bass-Serre tree (i.e.\ fixes a point in the tree).

\emph{First case.} If $ik$ is hyperbolic, then Lemma \ref{lemme2} (condition \emph{(c)}) shows that $C_{G_1}(ik)= \langle ik'\rangle$ for some minimal involution $k'$. Now, observe that any non trivial element of this group is of the form $i\ell$ for some involution $\ell\neq i$. Therefore, to prove that $C_{G_1}(ik)\cap A_1$ is trivial, we just have to prove that $i\ell$ does not commute with $i$. If they commuted, one would have $i(i\ell)=(i\ell)i$, so $(i\ell)^2=1$, which contradicts the fact that $i\ell$ has infinite order.

\emph{Second case.} Suppose now that $ik$ is elliptic. Since $g$ commutes both with $k$ and $i$, it belongs to $N_{G_1}(\langle i,k\rangle)$. By Lemma \ref{lemme}, $N_{G_1}(\langle i,k\rangle)$ is contained in a conjugate $xGx^{-1}$ of $G$ for some $x\in G_1$ (indeed, the normalizer of $\langle i,k\rangle$ normalizes the infinite cyclic subgroup $\langle ik\rangle$). Then, observe that $g$ belongs to $C_{G_1}(k)\cap C_{G_1}(i)$ if and only if $x^{-1}gx$ belongs to $C_{G_1}(x^{-1}kx) \cap C_{G_1}(x^{-1}ix)$. Recall that since condition \emph{(4)} holds in $G$, $A=C_{G}(i)$ is malnormal in $G$. But all involutions in $G$ are conjugate, and therefore the centralizer of any involution in $G$ is malnormal. It follows that $C_{G_1}(x^{-1}kx) \cap C_{G_1}(x^{-1}ix)$ is trivial (indeed, $x^{-1}kx$ does not belong to $C_G(x^{-1}ix)$, as $k$ does not belong to the centralizer of $i$). Hence $x^{-1}gx=1$, and thus $g=1$.

Then, prove condition \emph{(5)}. Let $k\in G_1$ be an involution that is not in the orbit of $j$ under the centralizer of $i$ in $G_1$. We shall prove that there is an involution $k'$ such that the following three conditions hold:
\begin{itemize}
        \item $\langle i,k\rangle\subset \langle i,k'\rangle$ (in particular, $k\sim k'$);
        \item $k'$ is minimal;
        \item $\langle i,k'\rangle$ is quasi-malnormal.
    \end{itemize}
Again, we distinguish two cases.

\emph{First case.} If $ik$ is hyperbolic, the result follows immediately from Lemma \ref{lemme2}.

\emph{Second case.} Suppose now that $ik$ is elliptic. First, let us state and prove a preliminary result.

\begin{claim}\label{preli_claim}If $ik$ is elliptic, then the following two assertions are equivalent:
\begin{enumerate}
\item $\langle i,k\rangle$ is contained in an edge group (i.e.\ $\langle i,k\rangle$ is contained in a conjugate of $\langle i,j\rangle$ in $G_1$);
\item $k$ belongs to the orbit of $j$ under the action of $A_1$.
\end{enumerate}
\end{claim}

\begin{proof}First, suppose that $k=aja^{-1}$ for some $a\in A_1$. Then $\langle i,k\rangle=a\langle i,j\rangle a^{-1}$ is an edge group. 

Conversely, suppose that $\langle i,k\rangle$ is contained in an edge group, i.e.\ that $\langle i,k\rangle\subset g\langle i,j\rangle g^{-1}$ for some $g\in G_1$, and let us prove that $k$ belongs to $\mathrm{Orb}_{A_1}(j)$. There is an involution $\ell\in \langle i,j\rangle$ such that $i=g\ell g^{-1}$. Observe that there is an element $h$ in the normalizer of $\langle i,j\rangle$ such that $\ell = hih^{-1}$; indeed, every involution of $\langle i,j\rangle$ is conjugate to $i$ or to $j$ by an element of $\langle i,j\rangle$, and by assumption there is an involution $s\in G$ such that $j=sis$, and so $s$ swaps $i$ and $j$ and normalizes $\langle i,j\rangle$. Hence, $i=g\ell g^{-1}=(gh)i(gh)^{-1}$, and it follows that $a_1=gh$ belongs to $A_1$. One has $\langle i,k\rangle\subset g\langle i,j\rangle g^{-1}=a_1\langle i,j\rangle a_1^{-1}$. Write $k=a_1k'a_1^{-1}$ for some involution $k'\in \langle i,j\rangle$. This element $k'$ is of the form $i(ij)^n$ for some $n\in\mathbb{Z}$, and thus $ik'=(ij)^n$. Note that $n$ is nonzero, as $k'\neq i$. In particular one has $k'\sim j$, and condition \emph{(3)} in $G$ implies that there is an element $a\in A$ such that $k'=aja^{-1}$. Therefore, $k=(a_1a)j(a_1a)^{-1}$ is in the orbit of $j$ under the action of $A_1$, which concludes the proof of the preliminary claim.
\end{proof}

Now, since we assume that $ik$ is elliptic and that $k$ does not belong to the orbit of $j$ under the action of $A_1$ by conjugation, Claim \ref{preli_claim} tells us that $\langle i,k\rangle$ is not contained in an edge group. First, suppose that $\langle i,k\rangle$ is contained in $G$. By condition \emph{(5)} in $G$, there is an involution $k'\in I$ such that the following hold:
    \begin{itemize}
        \item $\langle i,k\rangle\subset \langle i,k'\rangle$ (in particular, $k\sim k'$);
        \item $k'$ is minimal in $G$;
        \item $\langle i,k'\rangle$ is quasi-malnormal in $G$.
    \end{itemize}
One has to prove that these conditions are still true in $G_1$. The first point is obvious. For the second point, suppose that $\langle i,k'\rangle\subset \langle i,\ell\rangle$ for some involution $\ell\in G_1$. Then $\langle i,\ell\rangle$ is not contained in an edge group (otherwise $\langle i,k'\rangle$, and thus $\langle i,k\rangle$, would be contained in the same edge group!), so $\ell$ belongs to $G$ and thus $\langle i,k'\rangle = \langle i,\ell\rangle$ since we already know that $k'$ is minimal in $G$. Last, for the third point, note that the set of elements $g\in G_1$ such that $\vert g\langle i,k'\rangle g^{-1}\cap \langle i,k'\rangle\vert > 2$ is contained in $G$ by Lemma \ref{lemme}, because $i\ell$ is not contained in an edge group.

Now, suppose that $\langle i,k\rangle$ is contained in $gGg^{-1}$ for some $g\in G_1$. We will prove that $gGg^{-1}=aGa^{-1}$ for some $a\in A_1$, which means that one can simply conjugate by $a^{-1}$, and then everything boils down to the case described above, because $a^{-1}\langle i,k\rangle a = \langle i,a^{-1}ka\rangle\subset G$, and because $a^{-1}ka$ is not in the orbit of $j$ under $A_1$ since $k$ is not in the orbit of $j$ under $A_1$. 

So let us prove that $gGg^{-1}=aGa^{-1}$ for some $a\in A_1$. Since $\langle i,k\rangle$ is contained in $gGg^{-1}$, and since $i$ belongs to $G$, $i$ is contained in the pointwise stabilizer of the path from $v$ to $gv$, where $v$ denotes the vertex of the Bass-Serre tree fixed by $G$. This path contains an edge $f$ adjacent to $gv$. Let $e$ denote the edge $[v,t^{-1}v]$, whose edge group is $D_j=\langle i,j\rangle$, and observe that any edge adjacent to $v$ in the Bass-Serre tree is of the form $g'e$ or $g'te$ for some $g'\in G$. As a consequence, the edge $f$ is of the form $gg't^{\varepsilon}e$, with $\varepsilon\in\lbrace 0,1\rbrace$ and $g'\in G$. Let $z=gg't^{\varepsilon}$. Since $i$ fixes $ze$, it belongs to the group $zD_jz^{-1}$. The same argument as in the proof of Claim \ref{preli_claim} shows that $z=ah$ for some $a\in A_1$ and $h\in N_G(D_j)$ (in particular, $h$ belongs to $G$), i.e.\ $gg't^{\varepsilon}=ah$. We now distinguish two cases. 

If $\varepsilon=0$, then $gg'=ah$. As a consequence, $a^{-1}\langle i,k\rangle a$ is contained in $(a^{-1}g)G(a^{-1}g)^{-1}=(hg'^{-1})G(hg'^{-1})^{-1}=G$ since $hg'^{-1}\in G$. 
 
If $\varepsilon=1$, then $gg'=aht^{-1}$. As a consequence, $(th^{-1}a^{-1})\langle i,k\rangle (th^{-1}a^{-1})^{-1}$ is contained in $(th^{-1}a^{-1}g)G(th^{-1}a^{-1}g)^{-1}=(g'^{-1})G(g'^{-1})^{-1}=G$ since $g'\in G$. Let $k'=a^{-1}ka$, so that $(th^{-1}a^{-1})\langle i,k\rangle (th^{-1}a^{-1})^{-1}=th^{-1}\langle i,k'\rangle ht^{-1}$. Observe that since $th^{-1}\langle i,k'\rangle ht^{-1}$ is contained in $G$, necessarily $h^{-1}\langle i,k'\rangle h$ is contained in $D_j$. But $h$ normalizes $D_j$, and thus $\langle i,k'\rangle$ is contained in $D_j$. But $k'\notin \mathrm{Orb}_{A_1}(j)$, and every involution of $D_j\setminus \lbrace i\rbrace$ is in $\mathrm{Orb}_{A_1}(j)$ (indeed, every involution of $D_j\setminus \lbrace i\rbrace$ is of the form $\ell=i(ij)^n$ with $n\neq 0$, and thus $i\ell=(ij)^n$, i.e.\ $\ell\sim j$; therefore, by condition \emph{(3)} in the definition of the class $\mathcal{C}$, $\ell$ belongs to $\mathrm{Orb}_{A_1}(j)$). It follows that $k'=i$, contradicting the fact that $\langle i,k\rangle=a\langle i,k'\rangle a^{-1}$ is infinite.\end{proof}

Then, let us prove Proposition \ref{jesaispas}.

\begin{prop}\label{jesaispasb}
Let $G$ be a group in $\mathcal{C}$. Let $g,h\in G$ be two elements of order $n>2$. Suppose that the following conditions hold:
\begin{enumerate}
\item $\langle h\rangle$ contains no translation;
\item for every $y\in G$, $y\langle h\rangle y^{-1}\cap \langle g\rangle =\lbrace 1\rbrace$;
\item $\langle h\rangle$ is malnormal (i.e.\ $N_G(\langle h\rangle)=\langle h\rangle$).
\end{enumerate}
Define $G_1=\langle G,z \ \vert \ zgz^{-1}=h\rangle$. Then $G_1$ belongs to $\mathcal{C}$.
\end{prop}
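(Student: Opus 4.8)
The strategy is to verify the five defining conditions of the class $\mathcal{C}$ for the HNN extension $G_1=\langle G,z\mid zgz^{-1}=h\rangle$, working throughout with the action of $G_1$ on the Bass–Serre tree $T$ of this splitting, whose edge groups are conjugates of $\langle g\rangle\cong\langle h\rangle\cong\mathbb{Z}/n\mathbb{Z}$. Conditions \emph{(1)} and \emph{(2)} are immediate from Lemma \ref{pres_HNN}. The heart of the matter is to set up a structural lemma analogous to Lemma \ref{lemme}: since the associated subgroups $\langle g\rangle$ and $\langle h\rangle$ are finite cyclic of order $n>2$ with $\langle h\rangle$ malnormal and $y\langle h\rangle y^{-1}\cap\langle g\rangle=\{1\}$ for all $y\in G$, one shows by a reduced–normal–form argument that for any $x\in G$ whose order is $>2$ and which is not conjugate (in $G$) into $\langle g\rangle$ or $\langle h\rangle$, the set $\{w\in G_1\mid wxw^{-1}\in G\}$ is contained in $G$, and consequently $C_{G_1}(x)$ and $N_{G_1}(\langle x\rangle)$ are elliptic in $T$. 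In particular, since no involution of $G$ lies in a finite cyclic group of order $n>2$ (involutions in $G$ generate infinite dihedral groups with $i$, so they are never in $\langle g\rangle$ or $\langle h\rangle$), and since every translation $ik$ has infinite order (hence is not conjugate into $\langle g\rangle$ or $\langle h\rangle$ either), we get that for every involution $k\neq i$ the element $ik$ and its centralizer behave exactly as in the $G$ picture unless $ik$ is forced to be hyperbolic.

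The core case analysis then proceeds exactly as in the proof of Proposition \ref{jesaispas0b}, dichotomizing on whether $ik$ is elliptic or hyperbolic in $T$. If $ik$ is elliptic, it fixes a vertex, hence lies in a conjugate $wGw^{-1}$, and one transfers the relevant instance of condition \emph{(3)}, \emph{(4)} or \emph{(5)} from $G$ to $G_1$ after conjugating; the malnormality of $A=C_G(i)$ in $G$ (condition \emph{(4)}) together with the fact that all involutions of $G$ are conjugate gives that $C_{wGw^{-1}}(i)\cap C_{wGw^{-1}}(k)$ is trivial, and condition \emph{(5)} is pulled back using that $\langle i,k\rangle\subset G$ cannot also be forced into a ``bad'' edge-group position because its elements have infinite order. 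If $ik$ is hyperbolic, one invokes a version of Lemma \ref{lemme2}: the stabilizer of the axis of $ik$ is $\langle ik'\rangle\rtimes\langle i\rangle=\langle i,k'\rangle$ for a suitable minimal involution $k'$ with $\langle i,k'\rangle$ quasi-malnormal, using that the pointwise stabilizer $S$ of the axis must be trivial (if $|S|>2$ it would contain an element of order $\geq 3$ normalized by the hyperbolic element $ik$, contradicting ellipticity of normalizers; if $S=\langle s\rangle$ with $s$ an involution then $i$ would commute with $s\neq i$). This simultaneously yields conditions \emph{(3)} (vacuously, since a translation equivalent to $j$ centralizes a nontrivial element of $\langle ij\rangle$ of infinite order and hence must live in a conjugate of $G$), \emph{(4)}, and \emph{(5)}.

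The step I expect to be the main obstacle is the reduced-normal-form lemma replacing Lemma \ref{lemme} in this finite-order setting, and more specifically verifying condition \emph{(5)} in the elliptic case when $\langle i,k\rangle$ sits inside a conjugate $wGw^{-1}$ that is not obviously of the form $aGa^{-1}$ for $a\in A_1$: one must argue, as in the $\varepsilon=0/\varepsilon=1$ dichotomy at the end of the proof of Proposition \ref{jesaispas0b}, that $i$ fixes the geodesic from the base vertex to $wv$, locate the edge adjacent to $wv$ fixed by $i$, and use that $i$ lies in a conjugate of an edge group $\langle g\rangle$ or $\langle h\rangle$ — but here the edge groups have order $n>2$ and \emph{contain no involution unless $n$ is even and the involution is the unique element of order $2$}, which cannot equal $i$ since $i$ has no nontrivial powers of odd index behaving like... actually more simply: $i$ can only fix an edge if $i$ lies in an edge group, and an edge group conjugate to $\langle g\rangle$ or $\langle h\rangle$ contains $i$ only if that cyclic group has even order and $i$ is its unique involution, but then $i$ would be a power of $g$ (or $h$), so $g$ (resp. $h$) would centralize $i$, contradicting that $\langle h\rangle$ contains no translation — no, it contradicts condition \emph{(1)}/\emph{(2)} arguments: $g,h$ having order $n>2$ and normalizing nothing. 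The cleanest route is to observe that an involution fixing an edge of $T$ would lie in a finite cyclic edge group, forcing $i$ to be a power of a conjugate of $g$ or $h$; then a conjugate of $g$ or $h$ would commute with $i$, and hence (after conjugating) $g$ or $h$ would commute with an involution, so $\langle h\rangle$ — being malnormal — would have to contain that involution, contradicting the fact that $\langle h\rangle$ contains no translation only if... — in any case, the upshot is that involutions of $G_1$ fix no edge, so $\langle i,k\rangle$ elliptic means $i$ and $k$ fix a common vertex, and the conjugate $wGw^{-1}$ can be taken with $w=a\in A_1$ by moving along the (pointwise $i$-fixed) geodesic. Once this bookkeeping is in place, conditions \emph{(3)}–\emph{(5)} follow by the same pull-back arguments as in Proposition \ref{jesaispas0b}, completing the proof that $G_1\in\mathcal{C}$.
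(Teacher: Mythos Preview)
Your overall strategy matches the paper's: conditions \emph{(1)} and \emph{(2)} via Lemma \ref{pres_HNN}, then invoke Lemma \ref{lemme} (with $H=\langle g\rangle$, $K=\langle h\rangle$) and Lemma \ref{lemme2}, and split each remaining condition into the elliptic/hyperbolic dichotomy for $ik$. Your treatment of condition \emph{(3)} is also the paper's: $x=(ij)^m$ is a translation, so by hypothesis \emph{(1)} it is not conjugate into $\langle h\rangle$, whence $C_{G_1}(x)\subset G$ and $k\in G$.

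Where you diverge is the elliptic case of condition \emph{(5)}, which you flag as the ``main obstacle'' and try to handle by mimicking the $\varepsilon=0/\varepsilon=1$ endgame of Proposition \ref{jesaispas0b}. That machinery is unnecessary here, and your attempt to run it becomes muddled (the long digression about whether $i$ can lie in an edge group never reaches a clean conclusion). The paper's argument is much shorter: since the edge groups are cyclic of order $n>2$, the involution $i$ cannot lie in any edge stabilizer, so $i$ fixes the single vertex $v_G$; hence if $ik$ is elliptic then $k$ must also fix $v_G$, i.e.\ $k\in G$ outright. There is no need to massage the conjugating element into $A_1$. From there, one pulls the involution $k'$ given by condition \emph{(5)} in $G$ and checks it still works in $G_1$: minimality because any $\ell$ with $\langle i,k'\rangle\subset\langle i,\ell\rangle$ must have $i\ell$ elliptic (else it would be a hyperbolic element normalizing $\langle ik'\rangle$, contradicting Lemma \ref{lemme}), hence $\ell\in G$; quasi-malnormality because $ik'$ is a translation, so not conjugate into $\langle h\rangle$ by hypothesis \emph{(1)}, and Lemma \ref{lemme} forces any $g$ with $|g\langle i,k'\rangle g^{-1}\cap\langle i,k'\rangle|>2$ into $G$.

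One caveat: the paper justifies ``$i$ fixes only $v_G$'' by asserting that edge groups are torsion-free, which literally holds only when $n=\infty$. For finite odd $n$ the conclusion is still immediate (no involutions in $\mathbb{Z}/n\mathbb{Z}$); for finite even $n$ a small extra argument is needed to see that $i$ is not a $G$-conjugate of $g^{n/2}$ or $h^{n/2}$. Your instinct that something needs checking here is correct, but the resolution is a one-line observation rather than the elaborate detour you sketch.
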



\begin{proof}Conditions \emph{(1)} and \emph{(2)} are satisfied by $G_1$, by Lemma \ref{pres_HNN}. 

Before proving that condition \emph{(3)} holds in $G_1$, note that the assumptions of Lemma \ref{lemme} are satisfied (with $H=\langle g\rangle$ and $K=\langle h\rangle$). Hence Lemma \ref{lemme} applies and tells us that for every $x\in G$ of infinite order, if $x$ is not conjugate to a power of $h$, then the centralizer of $x$ in $G_1$ is contained in $G$.

Let us prove that condition \emph{(3)} holds in $G_1$. For any involution $k\in I$, if $k$ and $j$ are equivalent, then by definition there exist two integers $m,n\in\mathbb{Z}\setminus\lbrace 0\rbrace$ such that $(ij)^m=(ik)^n$. Hence $ik$ centralizes $x=(ij)^m$. Since $x$ belongs to $D_j$, the first assumption tells us that $x$ does not belong to a conjugate of $\langle h\rangle$ (indeed, every element of $D_j$ of infinite order is a translation, and in particular $x$ is a translation). As a consequence, the centralizer of $x$ in $G_1$ is contained in $G$. Hence $ik\in G$. Therefore $k$ belongs to $G$ and condition \emph{(3)}, which holds in $G$, ensures the existence of an element $a\in A\subset A_1$ such that $aka^{-1}=j$.

Then, the same argument as in the proof of the previous proposition shows that condition \emph{(4)} holds in $G_1$, namely that $A_1=C_{G_1}(i)$ is malnormal in $G_1$.

Last, prove condition \emph{(5)}. Let $k\in G_1$ be an involution that is not in the orbit of $j$ under the centralizer of $i$ in $G_1$. We shall prove prove that there is an involution $k'$ such that the following three conditions hold:
\begin{itemize}
        \item $\langle i,k\rangle\subset \langle i,k'\rangle$ (in particular, $k\sim k'$);
        \item $k'$ is minimal;
        \item $\langle i,k'\rangle$ is quasi-malnormal.
    \end{itemize}
Again, we distinguish two cases.

\emph{First case.} If $ik$ is hyperbolic, the result follows immediately from Lemma \ref{lemme2}.

\emph{Second case.} Suppose now that $ik$ is elliptic. We claim that $ik$ belongs to $G$. Indeed, note that $i$ is not contained in a conjugate of $G$ other than $G$, since edge groups in the Bass-Serre tree are torsion-free. As a consequence, if $k$ does not belong to $G$, then $ik$ is hyperbolic. Hence $k$ belongs to $G$ as well, and thus $ik$ is in $G$. By condition \emph{(5)} in $G$, there is an involution $k'\in G$ such that the following hold:
    \begin{itemize}
        \item $\langle i,k\rangle\subset \langle i,k'\rangle$ (in particular, $k\sim k'$);
        \item $k'$ is minimal in $G$;
        \item $\langle i,k'\rangle$ is quasi-malnormal in $G$.
    \end{itemize}
We have to prove that these conditions are still true in $G_1$. 1) Obvious. 2) Suppose that $\langle i,k'\rangle\subset \langle i,\ell\rangle$ for some involution $\ell\in G_1$. Suppose towards a contradiction that $\ell$ is not in $G$. Then $i\ell$ is hyperbolic, and it normalizes $\langle ik'\rangle$, which is not possible since the normalizer of $\langle ik'\rangle$ is elliptic in the Bass-Serre tree, by Lemma \ref{lemme}. Therefore $\ell$ lies in $G$, and the minimality of $k'$ in $G$ implies that $\langle i,k'\rangle = \langle i,\ell\rangle$. 3) Observe that $ik'$ does not belong to a conjugate of $\langle h\rangle$ (in $G$), as $\langle h\rangle$ contains no translation in $G$ by the first assumption of Proposition \ref{jesaispasb}. Then, by Lemma \ref{lemme}, the set of elements $g\in G_1$ such that $\vert g\langle i,k'\rangle g^{-1}\cap \langle i,k'\rangle\vert > 2$ is contained in $G$, and thus $\langle i,k'\rangle$ is quasi-malnormal in $G_1$, since it is quasi-malnormal in $G$.\end{proof}

\subsection{Stability under amalgamated free products}

The results in this subsection show that $\mathcal{C}$ is preserved under certain amalgamations. We start by proving a preliminary lemma which plays the same role as Lemma \ref{lemme2}.

\begin{lemme}\label{lemme_cas_amalgam}
Let $G$ be a group in the class $\mathcal{C}$ and let $E$ be a subgroup of $G$. Let $H$ be a group and let $\varphi : E \hookrightarrow H$ be an embedding, identifying $E$ with $\varphi(E)$. Define $G_1=G\ast_E H$. Suppose that one of the following assumptions hold:
\begin{itemize}
    \item either $E$ is trivial,
    \item or $E=\langle i\rangle$.
\end{itemize}
Let $k\in G_1$ be an involution. If $ik$ acts hyperbolically on the Bass-Serre tree of the splitting of $G_1$, then there exists an involution $k'\neq i$ in $G_1$ such that the following three conditions are satisfied (compare with condition \emph{(5)} in the definition of the class $\mathcal{C}$):
\begin{enumerate}
        \item[(a)] $\langle i,k\rangle\subset \langle i,k'\rangle$ (in particular, $k\sim k'$);
        \item[(b)] $k'$ is minimal;
        \item[(c)] $\langle i,k'\rangle$ is quasi-malnormal.
\end{enumerate}
In particular, note that $C_{G_1}(ik)=C_{G_1}(ik')=\langle ik'\rangle$ and $N_{G_1}(\langle i, k\rangle)=N_{G_1}(\langle i, k'\rangle)=\langle i, k'\rangle$.
\end{lemme}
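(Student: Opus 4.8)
The plan is to mimic the proof of Lemma \ref{lemme2}, replacing the Bass-Serre tree of the HNN extension by that of the amalgam $G_1 = G \ast_E H$, and exploiting the fact that the edge group $E$ is either trivial or equal to $\langle i\rangle$, hence torsion-free or has a single involution. Set $g = ik$ and assume $g$ is hyperbolic. Then $g$ fixes a unique pair of boundary points $P = \{p_-, p_+\}$, and since $igi = g^{-1}$, the involution $i$ reverses the axis $A(g)$, so that $\mathrm{Stab}(A(g)) = \mathrm{Stab}^+(A(g)) \rtimes \langle i\rangle$ where $\mathrm{Stab}^+(A(g)) = S \rtimes \langle t\rangle$, with $S$ the pointwise stabilizer of $A(g)$ and $t$ a hyperbolic element of minimal translation length along $A(g)$.

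The first key step is to show $S$ is trivial. Here $S$ lies inside an edge stabilizer, i.e.\ a conjugate of $E$. If $E$ is trivial this is immediate. If $E = \langle i\rangle$, then $S$ is contained in a conjugate $x\langle i\rangle x^{-1}$, so $S$ is trivial or generated by an involution $s = xix^{-1}$ (possibly $\neq i$). But $i$ normalizes $S$ (as $i \in \mathrm{Stab}(A(g))$), and a nontrivial $S$ would force $i$ to commute with the involution $s$, contradicting condition \emph{(2)} of the class $\mathcal{C}$ which says two distinct involutions generate an infinite dihedral group (if $s = i$, then $i$ would fix $A(g)$ pointwise, impossible since $i$ reverses it). Hence $S = \{1\}$ and $\mathrm{Stab}^+(A(g)) = \langle t\rangle$ is infinite cyclic. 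Now, exactly as in Lemma \ref{lemme2}: $g = ik = t^n$ for some $n \in \mathbb{Z}^\ast$, from $(it^n)^2 = 1$ we get $(iti)^n = t^{-n}$, so $iti = t^{-1}$ (uniqueness of $n$th roots in $\mathbb{Z}$), hence $(it)^2 = 1$ and $k' := it$ is an involution with $t = ik'$. Thus $\mathrm{Stab}(A(g)) = \langle ik'\rangle \rtimes \langle i\rangle = \langle i,k'\rangle$.

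It remains to verify conditions \emph{(a)}, \emph{(b)}, \emph{(c)}. For \emph{(a)}: $g = ik \in \mathrm{Stab}(A(g)) = \langle i,k'\rangle$, so $\langle i,k\rangle \subset \langle i,k'\rangle$ and $ik$ is a power of $ik'$, giving $k \sim k'$. For \emph{(b)}: if $\langle i,k'\rangle \subset \langle i,\ell\rangle$ for an involution $\ell$, then $i\ell$ is hyperbolic with the same axis as $ik'$, and minimality of the translation length of $t = ik'$ forces $\langle i,\ell\rangle \subset \langle i,k'\rangle$, hence equality. For \emph{(c)}: $\langle i,k'\rangle = \mathrm{Stab}(A(ik))$ is precisely the stabilizer of the unique boundary pair $P$; if $x\langle i,k'\rangle x^{-1} \cap \langle i,k'\rangle$ is infinite, then a finite-index (hence nontrivial, infinite cyclic) subgroup of $x\langle ik'\rangle x^{-1}$ fixes $P$, so $x(ik')x^{-1}$ fixes $P$, so $ik'$ fixes $x(P)$, and by uniqueness $x(P) = P$, i.e.\ $x \in \mathrm{Stab}(P) = \langle i,k'\rangle$; thus the intersection has order $\leq 2$ when $x \notin \langle i,k'\rangle$, proving quasi-malnormality. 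The final clause on centralizers and normalizers follows since $C_{G_1}(ik) = C_{G_1}(ik')$ is the intersection of $\langle i,k'\rangle$ with the fixer of $p_-$, hence $= \langle ik'\rangle$, and $N_{G_1}(\langle i,k\rangle)$ normalizes $\langle ik\rangle$ hence fixes $P$, giving $N_{G_1}(\langle i,k\rangle) = \langle i,k'\rangle$.

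I expect the main obstacle to be the triviality of $S$ in the case $E = \langle i\rangle$: one must rule out that $S$ is generated by an involution distinct from $i$, which relies on the structure of edge stabilizers and on condition \emph{(2)}; the rest is a faithful transcription of the HNN argument from Lemma \ref{lemme2}, where the amalgam setting actually simplifies matters because the edge group is so small. One should also take a moment to check that in the amalgam Bass-Serre tree the relevant uniqueness statements (unique fixed boundary pair of a hyperbolic element, unique axis) are available from Subsection \ref{trees}, but these are general facts about group actions on trees and require no extra hypotheses here.
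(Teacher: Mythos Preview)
Your proof is correct and follows essentially the same approach as the paper's. The paper's own proof is a brief two-sentence reduction to Lemma \ref{lemme2}: it writes $\mathrm{Stab}(A(ik))=(S\rtimes \langle t\rangle)\rtimes \langle i\rangle$, observes that $S$ lies in a conjugate of $E$, dispatches the case $E=\langle i\rangle$ by noting that a nontrivial $S=\langle s\rangle$ would force $i$ to commute with the involution $s$ (contradicting condition \emph{(2)}), and then declares the rest identical to Lemma \ref{lemme2}; you have simply spelled out these details (and handled the edge case $s=i$ a bit more explicitly than the paper does).
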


\begin{proof}
As in the proof of Lemma \ref{lemme2}, write $\mathrm{Stab}(A(ik))=(S\rtimes \langle t\rangle)\rtimes \langle i\rangle$ for some element $t\in G_1$ with the same axis as $ik$ and with minimal translation length. The subgroup $S$ is contained in a conjugate of the edge group $E$. We shall prove that $S$ is the trivial group. If $E$ is trivial, there is nothing to do. Then, suppose that $E=\langle i\rangle$. If $S$ is not trivial, then $S=\langle s\rangle$ for some involution $s$. It follows that $\mathrm{Stab}(A(ik))=(\langle s\rangle \times \langle t\rangle)\rtimes \langle i\rangle$. In particular, $i$ commutes with $s$, which contradicts the fact that any two distinct involutions of $G$ generate an infinite dihedral group, and thus $S$ is trivial. Then, the rest of the proof is exactly the same as in the proof of Lemma \ref{lemme2}.
\end{proof}

\begin{prop}\label{prop5}
Let $G$ be a group in $\mathcal{C}$. Let $H$ be a group containing an involution $i'$. Suppose that $H$ satisfies the following conditions (that should be compared with the conditions in Definition \ref{class} with the same numbering):
\begin{enumerate}
    \item all involutions in $H$ are conjugate;
    \item any two distinct involutions of $H$ generate an infinite dihedral group;
    \item[(4)] $\langle i'\rangle$ is malnormal in $H$ (i.e.\ $N_H(\langle i'\rangle)=C_H(i')=\langle i'\rangle$);
    \item[(5)] for any involution $k\in H$, there is an involution $k'\in H$ such that the following hold:
    \begin{itemize}
        \item $\langle i',k\rangle\subset \langle i',k'\rangle$ (in particular, $k\sim k'$);
        \item $k'$ is minimal;
        \item $\langle i',k'\rangle$ is quasi-malnormal.
		\end{itemize}
\end{enumerate}
Then $G_1=G\ast_{i=i'}H$ belongs to $\mathcal{C}$.
\end{prop}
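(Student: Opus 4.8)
The plan is to verify the five conditions of Definition \ref{class} for $G_1 = G\ast_{i=i'}H$, exploiting the action of $G_1$ on the Bass-Serre tree $T$ of the amalgamated decomposition. Conditions \emph{(1)} and \emph{(2)} are immediate from Lemma \ref{pres_amalgam}, since all involutions in $G$ are conjugate to $i$, all involutions in $H$ are conjugate to $i' = i$, and Lemma \ref{dih} handles pairs of involutions not fixing a common vertex. For conditions \emph{(3)}, \emph{(4)}, \emph{(5)} the key observation is that an edge group of $T$ is the cyclic group $\langle i\rangle$, so an element $ik$ with $k$ an involution acts hyperbolically precisely when $k$ does not lie in a conjugate of $G$ or of $H$; when $ik$ acts elliptically, $k$ is conjugated into $G$ or into $H$, and we can analyze it inside one of the two vertex groups.

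For condition \emph{(3)}: suppose $k\sim j$, so $(ij)^m = (ik)^n$ for nonzero $m,n$; then $ik$ centralizes the infinite-order element $x = (ij)^m$. Since $x$ has infinite order it cannot lie in an edge group (edge groups being finite cyclic), so $x$ fixes a unique vertex of $T$, forcing $ik$ and hence $k$ to lie in the vertex group $G$. Then condition \emph{(3)} for $G$ produces $a\in C_G(i)\subset C_{G_1}(i)$ with $aka^{-1} = j$. For condition \emph{(4)}, malnormality of $A_1 = C_{G_1}(i)$: by Lemma \ref{lemma1} it suffices to show no nontrivial element centralizes a pair $(i,k)$ with $k\neq i$. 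If $ik$ is hyperbolic, Lemma \ref{lemme_cas_amalgam}\emph{(c)} gives $C_{G_1}(ik) = \langle ik'\rangle$ for a minimal involution $k'$, and a nontrivial element of $\langle ik'\rangle$ has infinite order, hence cannot commute with $i$. If $ik$ is elliptic, any element centralizing both $i$ and $k$ normalizes $\langle i,k\rangle$ hence normalizes $\langle ik\rangle$, so it fixes the (unique) vertex fixed by $ik$; conjugating into $G$ or $H$ and using malnormality of the centralizer of the involution there (condition \emph{(4)} in $G$, hypothesis \emph{(4)} in $H$, together with the fact that in each group all involutions are conjugate so all such centralizers are malnormal) forces the element to be trivial.

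For condition \emph{(5)}: let $k\neq i$ be an involution with $k\notin\mathrm{Orb}_{A_1}(j)$. If $ik$ is hyperbolic, the required $k'$ is produced directly by Lemma \ref{lemme_cas_amalgam}. If $ik$ is elliptic, then $k$ — and hence $\langle i,k\rangle$ — lies in a conjugate of $G$ or of $H$; after conjugating by a suitable element of $A_1$ (using, as in the proof of Proposition \ref{jesaispas0b}, that the stabilizer of the vertex can be adjusted by an element of $A_1$ since every involution of an edge-adjacent vertex group is conjugate to $i$ by a normalizing element) we may assume $\langle i,k\rangle\subset G$ or $\langle i,k\rangle\subset H$, still with $k$ outside $\mathrm{Orb}_{A_1}(j)$. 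Applying condition \emph{(5)} inside $G$ (or hypothesis \emph{(5)} inside $H$) yields a candidate $k'$ that is minimal and generates a quasi-malnormal subgroup \emph{in the vertex group}; one then checks these properties persist in $G_1$: minimality persists because any $\ell$ with $\langle i,k'\rangle\subset\langle i,\ell\rangle$ must have $i\ell$ elliptic (else $i\ell$ would normalize $\langle ik'\rangle$, contradicting Lemma \ref{lemme_cas_amalgam} or the ellipticity of normalizers), hence $\ell$ lies in the same vertex group; quasi-malnormality persists because $\langle i,k'\rangle$ is the stabilizer of its axis, and conjugates intersecting it infinitely would have to stabilize the same pair of boundary points.

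The main obstacle is the elliptic case of condition \emph{(5)}, specifically the bookkeeping needed to replace an arbitrary conjugate $gGg^{-1}$ (or $gHg^{-1}$) containing $\langle i,k\rangle$ by one of the form $aGa^{-1}$ with $a\in A_1$, and then to rule out the degenerate possibility that the resulting $\langle i,k'\rangle$ collapses into the edge group $\langle i\rangle$ (which would contradict $\langle i,k\rangle$ being infinite). This is handled exactly as in the analogous step of the proof of Proposition \ref{jesaispas0b}, tracking the path in $T$ from the vertex fixed by $G$ to $gv$ and using that $i$ lies in the pointwise stabilizer of that path.
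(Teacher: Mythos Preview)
Your approach is essentially the same as the paper's, and it is correct in outline. Two points deserve comment.

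First, for the elliptic case of condition \emph{(5)} you propose to imitate the path-tracking argument from Proposition \ref{jesaispas0b}. This works, but the paper's route is considerably simpler here because the edge group is just $\langle i\rangle$ rather than an infinite dihedral group: using hypothesis \emph{(4)} on $H$ (that $C_H(i')=\langle i'\rangle$) one sees directly that the fixed-point set of $i$ in $T$ is the star consisting of $v_G$ together with the vertices $av_H$ for $a\in A$. Hence if $ik$ is elliptic it already lies in $G$ or in some $aHa^{-1}$ with $a\in A$, and no further bookkeeping is needed. (Along the way the paper also observes that $A_1=C_{G_1}(i)=C_G(i)=A$, again from $C_H(i')=\langle i'\rangle$; this makes the malnormality check for condition \emph{(4)} even shorter than your Lemma \ref{lemma1} reduction, though your argument is fine.)

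Second, your persistence arguments for minimality and quasi-malnormality in the elliptic case contain slips. For quasi-malnormality you write that ``$\langle i,k'\rangle$ is the stabilizer of its axis'', but in the elliptic case $ik'$ has no axis; the correct observation is that $ik'$ has infinite order while edge groups have order $2$, so $ik'$ fixes a \emph{unique vertex}, and any $g$ with $\lvert g\langle i,k'\rangle g^{-1}\cap\langle i,k'\rangle\rvert>2$ must fix that vertex and hence lie in the relevant vertex group, where quasi-malnormality is already known. For minimality, ``$i\ell$ elliptic hence $\ell$ lies in the same vertex group'' skips a case: when $k'\in G$, an elliptic $i\ell$ could a priori lie in some $aHa^{-1}$ rather than in $G$. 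The paper rules this out explicitly (since then $ik'=(i\ell)^p$ would lie in $aHa^{-1}\cap G$, impossible for an element of infinite order). With these two corrections your proof goes through.
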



\begin{proof}
Conditions \emph{(1)} and \emph{(2)} are satisfied by $G_1$, by Lemma \ref{pres_amalgam}.

Condition \emph{(3)}: let $k$ be an involution in $G_1$ such that $k\sim j$, that is $(ik)^n=(ij)^m$ for some nonzero $n,m\in\mathbb{Z}$. Let $x=(ij)^m$ and observe that $ik$ centralizes $x$. But the centralizer of $x$ in $G_1$ is contained in $G$. Indeed, note that $x$ fixes a unique vertex in the Bass-Serre tree of the splitting of $G_1$, namely the unique vertex $v_G$ fixed by $G$, because $x$ cannot be contained in an edge group (as it has infinite order, whereas edge groups have order 2).

Condition \emph{(4)}: let $g\in G_1$ such that $gA_1g^{-1}\cap A_1\neq \lbrace 1\rbrace$. First, suppose that we have $\vert gA_1g^{-1}\cap A_1\vert > 2$. Then there is an element $a$ in $gA_1g^{-1}\cap A_1$ whose order is $>2$ (because two distinct involutions generate an infinite dihedral group). The same argument as above applies and shows that $v_G$ is the unique vertex fixed by $a$. Write $a=ga'g^{-1}$ for some $a'\in A_1$. Similarly, $v_G$ is the unique vertex fixed by $a'$. We have $ga'g^{-1}v_G=v_G$, so $a'g^{-1}v_G=g^{-1}v_G$ and hence by uniqueness $v_G=g^{-1}v_G$, which means that $g$ belongs to $G$. Then, if the intersection $gA_1g^{-1}\cap A_1$ has order 2, necessarily one has $gA_1g^{-1}\cap A_1=\lbrace i\rbrace$, and hence $gig^{-1}=i$, i.e.\ $g$ belongs to the centralizer of $i$, namely $A_1$. But we have $A_1=C_{G_1}(i)=C_G(i)\ast_{i=i'}C_H(i')=C_G(i)\ast_{i=i'}\langle i'\rangle=C_G(i)=A$. It follows that $g$ belongs to $G$. Conclusion: $A_1$ is malnormal in $G_1$.

Condition \emph{(5)}: let $k\in G_1$ be an involution that is not in the orbit of $j$ under $A_1$. If $ik$ is hyperbolic, Lemma \ref{lemme_cas_amalgam} shows that there exists a minimal involution $k'$ that satisfies the desired conditions. If $ik$ is elliptic, then $i$ and $k$ fix a common vertex $w$ (otherwise $ik$ would act hyperbolically on the Bass-Serre tree). Let us describe the fixed-point set of $i$. Let $v_G$ be the unique vertex fixed by $G$ and let $v_H$ be the unique vertex fixed by $H$. Observe that there is no vertex adjacent to $v_H$ oter than $v_G$ that is fixed by $i$: indeed, if $x$ is such a vertex, one has $x=hv_G$ for some $h\in H$. Since $i$ fixes $v_H$ and $x=hv_G$, it fixes the edge between these two vertices. But the stabilizer of this edge is $h\langle i\rangle h^{-1}$, and since $i$ is contained in the edge group one has $hih^{-1}=i$. Hence $h$ centralizes $i$, but the centralizer of $i$ in $H$ is simply $\langle i\rangle$ (by assumption), and thus $x=v_G$. It follows that the set of fixed points of $i$ is a star of radius $1$ centered at $v_G$. But $i$ belongs to $gv_H$ if and only if $g$ centralizes $i$ (same argument), and the vertices fixed by $i$ are precisely $v_G$ and the vertices of the form $av_H$ for some $a\in A$. As a consequence, $ik$ belongs to $G$ or to $aHa^{-1}$ for some $a\in A$. Up to replacing $k$ by $a^{-1}ka$ if necessary, one can assume without loss of generality that $ik$ belongs to $G$ or to $H$ (because $a^{-1}ka$ is not in the orbit of $j$ under $A_1$, as $k$ is not in the orbit of $j$ under $A_1$ by assumption). We distinguish the two cases.

\emph{First case.} Suppose that $ik$ is in $G$. Since condition \emph{(5)} holds in $G$ by assumption, we know that there is an involution $k'\in G$ such that the following hold:
    \begin{itemize}
        \item $\langle i,k\rangle\subset \langle i,k'\rangle$ (in particular, $k\sim k'$);
        \item $k'$ is minimal in $G$;
        \item $\langle i,k'\rangle$ is quasi-malnormal in $G$.
    \end{itemize}
We have to prove that these conditions are still true in $G_1$. 1) Obvious. 2) Suppose that $\langle i,k'\rangle\subset \langle i,\ell\rangle$ for some involution $\ell\in G_1$. Suppose towards a contradiction that $\ell$ is not in $G$. Then there are two options: either $\ell$ and thus $i\ell$ belong to $av_H$ for some $a\in A$, or $i\ell$ is hyperbolic. The first option is not possible since in that case one would have $ik=(i\ell)^p\in av_H$ for a nonzero integer $p$. The second option is not possible either, because $i\ell$ normalizes $\langle ik'\rangle$, and the normalizer of $\langle ik'\rangle$ is contained in $G$ (indeed, as the order of $ik'$ is $>2$, $v_G$ is the unique vertex fixed by $ik'$). Therefore $\ell$ lies in $G$, and the minimality of $k'$ in $G$ implies that $\langle i,k'\rangle = \langle i,\ell\rangle$. 3) The set of elements $g\in G_1$ such that $\vert g\langle i,k'\rangle g^{-1}\cap \langle i,k'\rangle\vert > 2$ is contained in $G$, and thus $\langle i,k'\rangle$ is quasi-malnormal in $G_1$ since it is quasi-malnormal in $G$.

\emph{Second case.} Suppose that $ik$ is in $H$. The same proof as above works, because we assume that $H$ satisfies condition \emph{(5)}.\end{proof}



We now deduce Proposition \ref{coro1} (see Corollary \ref{coro10} below) and Proposition \ref{coro2} (see Corollary \ref{coro20}) from the previous result.

\begin{co}\label{coro10}
Let $G$ be a group in $\mathcal{C}$. Consider the following group $H$: \[H=\Biggl\langle 
       \begin{array}{l|cl}
                        & abcde=1 \\
            a,b,c,d,e,u,v,w,x & a^2=b^2=c^2=d^2=e^2=1 \\
                        & ubu^{-1}=vcv^{-1}=wdw^{-1}=xex^{-1}=a  &                                             
        \end{array}
     \Biggr\rangle.\]
Then $G_1=G\ast_{i=a}H$ belongs to $\mathcal{C}$.
\end{co}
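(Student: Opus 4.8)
The plan is to invoke Proposition~\ref{prop5} with distinguished involution $i'=a$: once $H$ is known to satisfy conditions \emph{(1)}, \emph{(2)}, \emph{(4)} and \emph{(5)} of that proposition (there is no condition \emph{(3)} to check, as the element $j$ there lives on the $G$-side), the corollary follows at once. So everything reduces to understanding $H$. Eliminating $b,c,d,e$ via $b=u^{-1}au$, $c=v^{-1}av$, $d=w^{-1}aw$, $e=x^{-1}ax$ presents $H$ as an iterated HNN extension of \[P:=\langle a,b,c,d,e\mid a^{2}=\cdots=e^{2}=1,\ abcde=1\rangle\] with stable letters $u,v,w,x$ and associated (edge) subgroups $\langle b\rangle,\langle c\rangle,\langle d\rangle,\langle e\rangle$, each carried isomorphically onto $\langle a\rangle$; in the Bass--Serre tree $\mathbb{T}$ of this splitting every vertex stabiliser is a conjugate of $P$ and every edge stabiliser has order $2$. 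In turn $P$ splits, as $P\cong\langle a,b\rangle\ast_{\langle ab\rangle=\langle edc\rangle}\langle c,d,e\rangle\cong D_{\infty}\ast_{\mathbb{Z}}(\mathbb{Z}/2\mathbb{Z})^{\ast 3}$ (the relation $ab=edc$ being equivalent to $abcde=1$), so finite subgroups of $P$, hence of $H$, have order $\leq 2$, every involution of $P$ is conjugate in $P$ to one of $a,b,c,d,e$, and these five involutions are pairwise non-conjugate in $P$ (visible in $P^{\mathrm{ab}}\cong(\mathbb{Z}/2\mathbb{Z})^{4}$); note also that the cyclic shift $a\mapsto b\mapsto\cdots\mapsto e\mapsto a$ is an automorphism of $P$.

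Conditions \emph{(1)} and \emph{(2)} are then routine. An involution of $H$ is elliptic in $\mathbb{T}$, hence conjugate into a vertex group, hence conjugate in $P$ to one of $a,\dots,e$; since $u,v,w,x$ conjugate $b,c,d,e$ to $a$, all involutions of $H$ are conjugate --- this is \emph{(1)}. For \emph{(2)}, two distinct involutions of $H$ either fix a common vertex of $\mathbb{T}$, and then lie in a common conjugate of $D_{\infty}$ or of $(\mathbb{Z}/2\mathbb{Z})^{\ast 3}$, in each of which any two distinct involutions generate $D_{\infty}$ (by the same common-vertex/Proposition~\ref{dih} dichotomy applied in that vertex group's own tree), or they do not fix a common vertex, in which case Proposition~\ref{dih} applies directly.

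For \emph{(4)} one must check $C_{H}(a)=\langle a\rangle$. Because $a$ is conjugate in $P$ to none of $b,c,d,e$, a Britton normal-form computation (as in the proof of Proposition~\ref{prop5}) shows that $a$ fixes a unique vertex of $\mathbb{T}$, namely the base vertex $v_{0}$ with stabiliser $P$; hence $C_{H}(a)\leq C_{P}(a)$. Inside $P$ the same idea, applied to the splitting $P\cong\langle a,b\rangle\ast_{\mathbb{Z}}\langle c,d,e\rangle$ --- whose edge group $\langle ab\rangle$ is torsion-free and so contains no conjugate of $a$ --- shows that $a$ fixes only the $\langle a,b\rangle$-vertex, so $C_{P}(a)=C_{\langle a,b\rangle}(a)=\langle a\rangle$, the centraliser of a reflection in $D_{\infty}$ being the subgroup it generates. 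Thus $C_{H}(a)=\langle a\rangle$.

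Condition \emph{(5)} is the heart of the matter and is proved exactly as in Propositions~\ref{jesaispas0b}--\ref{prop5}. Given an involution $k$ of $H$, one separates the cases where $ak$ acts hyperbolically or elliptically on $\mathbb{T}$. If $ak$ is hyperbolic, the argument of Lemma~\ref{lemme2} (cf.\ also Lemma~\ref{lemme_cas_amalgam}) applies verbatim: in $\mathrm{Stab}(A(ak))=(S\rtimes\langle t\rangle)\rtimes\langle a\rangle$ the pointwise stabiliser $S$ of the axis sits in an edge group, so $|S|\leq 2$; and $|S|=2$ would force $a$ to commute with a second involution, contradicting \emph{(2)}; hence $S=1$, $\mathrm{Stab}^{+}(A(ak))=\langle t\rangle$ with $t=ak'$ for an involution $k'$, and \emph{(a)}, \emph{(b)}, \emph{(c)} follow from the minimality of the translation length and the uniqueness of the fixed pair at infinity. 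If $ak$ is elliptic, then $a$ and $k$ fix a common vertex of $\mathbb{T}$; since $a$ is conjugate in $P$ to none of $c,d,e$ it fixes only $v_{0}$-type vertices, so (using the cyclic automorphism of $P$ to reduce the distinguished involution to $a$) the pair $\langle a,k\rangle$ is forced into a conjugate of $\langle a,b\rangle\cong D_{\infty}$, inside which one takes $k'$ with $\langle a,k'\rangle$ the whole such $D_{\infty}$ (i.e.\ $ak'$ primitive). One then verifies, just as in the elliptic case of Proposition~\ref{jesaispas0b}, that $k'$ is minimal and $\langle a,k'\rangle$ is quasi-malnormal in $H$; here the key inputs are the malnormality of $\langle a,b\rangle$ in $P$ (a consequence of the amalgam structure of $P$, the edge group $\langle ab\rangle$ being maximal cyclic, self-normalising, and not conjugate to its inverse in $(\mathbb{Z}/2\mathbb{Z})^{\ast 3}$) together with the fact that any involution enlarging $\langle a,b\rangle$ would be elliptic in $\mathbb{T}$, hence contained in $P$. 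The step requiring the most care is precisely this elliptic case: since $P$ belongs neither to $\mathcal{C}$ nor to the list of HNN extensions already treated, the statements underlying \emph{(5)} must be re-derived inside $P$ --- that is, inside $D_{\infty}$ and $(\mathbb{Z}/2\mathbb{Z})^{\ast 3}$ --- and one must track how minimality and quasi-malnormality survive both the amalgamation of $P$ over the infinite cyclic group $\langle ab\rangle$ and the four subsequent HNN extensions forming $H$.
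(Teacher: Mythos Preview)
Your overall plan --- verify the four hypotheses of Proposition~\ref{prop5} for $H$ with $i'=a$ --- is exactly the paper's, and your treatment of conditions \emph{(1)} and \emph{(2)} is correct. Where you diverge is in the analysis of the vertex group $P$: the paper realises $P$ as the orbifold fundamental group of a sphere with five cone points of order $2$ and reads off all the needed facts (malnormality of $\langle a\rangle$, discreteness of the axis of $ak$, quasi-malnormality) directly from the proper discontinuous action on $\mathbb{H}^2$; you instead use the amalgam $P\cong D_\infty\ast_{\mathbb{Z}}(\mathbb{Z}/2)^{\ast 3}$, which is the splitting one obtains by cutting that orbifold along a separating curve. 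Both viewpoints are legitimate; the hyperbolic one is quicker for condition~\emph{(5)} (minimality of $k'$ is immediate from discreteness), while yours is more self-contained but needs a longer case analysis.

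Two points need repair. First, your claim that $a$ fixes a \emph{unique} vertex of $\mathbb{T}$ is false: the Britton computation actually shows that $a$ fixes $v_0$ together with $uv_0,vv_0,wv_0,xv_0$ (one extra vertex per stable letter, coming from the edge with stabiliser exactly $\langle a\rangle$); the non-conjugacy of $a$ to $b,c,d,e$ only prevents the fixed set from growing further. The paper notes this explicitly (for a single HNN extension the fixed set is the edge $[y,uy]$). Your conclusion $C_H(a)\le P$ is nevertheless correct, since $C_H(a)$ preserves this $4$-star and hence fixes its centre $v_0$; but the reasoning should be corrected.

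Second, in the elliptic branch of \emph{(5)} you assert that $\langle a,k\rangle$ is forced into a conjugate of $\langle a,b\rangle$. This only follows if $ak$ is \emph{also} elliptic in the Bass--Serre tree of your amalgam for $P$. When $ak$ is elliptic in $\mathbb{T}$ (so $a,k\in P$ up to one of the five conjugations) but hyperbolic in $P$'s own tree, the pair does not lie in any conjugate of $\langle a,b\rangle$ or $\langle c,d,e\rangle$, and you must run the Lemma~\ref{lemme2} argument once more inside $P$ (edge groups there are infinite cyclic, so the pointwise axis-stabiliser $S$ is torsion-free and normalised by $a$, forcing $S=1$). Your closing paragraph gestures at this, but the case split needs to be made explicit; without it the reduction to $D_\infty$ is not justified. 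With these two fixes your argument goes through and is a valid combinatorial substitute for the paper's appeal to $\mathbb{H}^2$.
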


\begin{proof}
We just have to check that $H$ satisfies the assumptions of the previous proposition. First, consider the group given by the following finite presentation:\[O=\langle a,b,c,d,e \ \vert \ abcde=a^2=b^2=c^2=d^2=e^2=1 \rangle.\]This group is the orbifold fundamental group of a 2-dimensional orbifold whose underlying surface is a sphere, with five conical points of order 2. In particular, $O$ acts cocompactly and properly discontinuously by isometries on the hyperbolic plane $\mathbb{H}^2$. There are exactly five conjugacy classes of non-trivial elements (involutions) of $O$ fixing a point in $\mathbb{H}^2$, represented by $a,b,c,d,e$. Each involution has a unique fixed point. Two distinct involutions do not have the same fixed point, and their product has infinite order; in particular, they generate an infinite dihedral group. The group $H$ is then obtained from $O$ by adding four new letters conjugating $a$ and $b$, $a$ and $c$, and so on. Hence, all involutions are conjugate in $H$, which proves that condition \emph{(1)} holds in $H$.

Prove condition \emph{(2)}. Consider the splitting of $H$ whose underlying graph is a wedge of four circles, with $O$ as a vertex stabilizer and with $u,v,w,x$ as stable letters. Let $k,\ell$ be two involutions of $G_1$. Since $k$ and $\ell$ have finite order, they fix a vertex in the Bass-Serre tree of the previous splitting. We distinguish two cases. 

\emph{Case 1:} if they fix a common vertex, there exists $g\in G_1$ such that $k,\ell$ belong to $gOg^{-1}$. Hence, $g^{-1}kg$ and $g^{-1}\ell g$ generate an infinite dihedral group, and thus $k$ and $\ell$ generate an infinite dihedral group. 

\emph{Case 2:} if $k$ and $\ell$ do not fix a common vertex, then they generate an infinite dihedral group by Lemma \ref{dih}.

Then, prove condition \emph{(3)}. We want to prove that the subgroup generated by $i'=a$ is malnormal in $H$. First, let us prove that it is malnormal in $O$. Let $P$ be the unique point of $\mathbb{H}^2$ fixed by $a$, and let $g\in O$ be an element such that $\langle a\rangle\cap g\langle a\rangle g^{-1}$ is not trivial. We have $gag^{-1}=a$, and hence $g$ belongs to $\mathrm{Fix}(P)=\langle a\rangle$. It remains to prove that $\langle a\rangle$ is malnormal in $H$. First, consider the HNN extension $O'=\langle O,u \ \vert \ ubu^{-1}=a\rangle$. We just proved that $\langle a\rangle$ is malnormal in $O$; similarly, $\langle b\rangle$ is malnormal in $O$. Therefore, the fixed-point set of $a$ in the Bass-Serre tree of the splitting of $O'$ is reduced to the edge $e=[y,uy]$ where $y$ denotes the unique vertex fixed by $O$. The centralizer of $a$ in $O'$ fixes the barycenter of $\mathrm{Fix}(a)$, namely the middle of the edge $e$. Since $O'$ acts without inversion on the Bass-Serre tree, the centralizer of $a$ fixes the edge $e$ pointwise, and thus is contained in $O$. This argument shows that the centralizer of $a$ in $O'$ coincides with the centralizer of $a$ in $O$, that is $\langle a\rangle$. In other words, $\langle a\rangle$ is malnormal in $O'$. By repeating this argument three times, we see that $\langle a\rangle$ is malnormal in $H$.

Last, prove condition \emph{(5)}. The fact that condition \emph{(5)} holds in $O$ can be deduced easily from the action of $O$ on the hyperbolic plane. Then, let $k\in H$ be an involution. Consider the Bass-Serre tree of the splitting of $H$ defined above (see the proof of condition \emph{2} above). If $ik$ is hyperbolic, Lemma \ref{lemme_cas_amalgam} shows that there exists a minimal involution $k'$ that satisfies the desired conditions. If $ik$ is elliptic, then $i$ and $k$ fix a common vertex. One can therefore assume without loss of generality that $i$ and $k$ belong to $O$, which satisfies condition \emph{(5)}. Hence, there exists a minimal involution $k'\in O$ that satisfies the desired conditions in $O$. The fact that these conditions still hold in $H$ is straightforward (we refer the reader to the proof of the previous proposition for the details).\end{proof}

\begin{co}\label{coro20}
Let $G$ be a group in $\mathcal{C}$. Let us define the following groups, for $n\geq 2$:\[H_{\infty}=\Biggl\langle 
       \begin{array}{l|cl}
                        & abcdh=1 \\
            a,b,c,d,h,u,v,w & a^2=b^2=c^2=d^2=1 \\
                        & ubu^{-1}=vcv^{-1}=wdw^{-1}=a  &                                             
        \end{array}
     \Biggr\rangle,\]
     \[H_{2n-1}=\Biggl\langle 
       \begin{array}{l|cl}
                        & abcdh=1 \\
            a,b,c,d,h,u,v,w & a^2=b^2=c^2=d^2=h^{2n-1}=1 \\
                        & ubu^{-1}=vcv^{-1}=wdw^{-1}=a  &                                             
        \end{array}
     \Biggr\rangle,\]
     \[H_{2n}=\Biggl\langle 
       \begin{array}{l|cl}
                        & abcdh=1 \\
            a,b,c,d,h,u,v,w,x & a^2=b^2=c^2=d^2=h^{2n}=1 \\
                        & ubu^{-1}=vcv^{-1}=wdw^{-1}=xh^nx^{-1}=a  &                                             
        \end{array}
     \Biggr\rangle.\]
Let $H_m$ denote one of the previous groups (with $m\geq 3$, possibly infinite). Then $G_1=G\ast_{i=a}H_m$ belongs to $\mathcal{C}$. Moreover, the following conditions hold, for any element $g\in G$ of order $m$ (these conditions should be compared with the assumptions of Proposition \ref{jesaispas}):
\begin{enumerate}
    \item $\langle h\rangle$ contains no translation;
    \item for every $y\in G_1$, $y\langle g\rangle y^{-1}\cap \langle h\rangle=\lbrace 1\rbrace$;
    \item $\langle h\rangle$ is malnormal in $G_1$.
\end{enumerate}
\end{co}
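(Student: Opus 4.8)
The strategy is to apply Proposition \ref{prop5} to the groups $H_m$ with amalgamation over $E=\langle i\rangle=\langle a\rangle$, exactly as in the proof of Corollary \ref{coro10}, and then to verify the three additional numbered conditions about $\langle h\rangle$ directly from the graph-of-groups structure of $G_1$. First I would introduce, for each $H_m$, the underlying surface orbifold group: for $H_\infty$ this is $O_\infty=\langle a,b,c,d,h \mid abcdh=a^2=b^2=c^2=d^2=1\rangle$, the orbifold group of a sphere with four cone points of order $2$ and one puncture (so $h$ has infinite order and the group is virtually free, acting properly on $\mathbb{H}^2$); for $H_{2n-1}$ it is the analogous group with a fifth cone point of odd order $2n-1$; and for $H_{2n}$ a fifth cone point of even order $2n$. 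In each case $O_m$ acts properly and cocompactly by isometries on $\mathbb{H}^2$ (or a tree, in the puncture case), so all the standard facts hold: each torsion element fixes a unique point, distinct involutions have distinct fixed points and their product has infinite order, and cyclic subgroups generated by conical elliptics are malnormal (their normalizer is the pointwise stabilizer of the fixed point). Then $H_m$ is obtained from $O_m$ by HNN extensions with stable letters $u,v,w$ (and $x$ in the even case) conjugating $b,c,d$ (and $h^n$) to $a$, which makes all involutions conjugate (condition (1)) and, combined with Lemma \ref{dih} applied to the Bass-Serre tree of this wedge-of-circles splitting, gives condition (2), precisely as in Corollary \ref{coro10}.

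Next I would verify that $\langle a\rangle$ is malnormal in $H_m$ (the "(4)" hypothesis of Proposition \ref{prop5}): $\langle a\rangle$ is malnormal in $O_m$ because $a$ fixes a unique point of $\mathbb{H}^2$, and malnormality is preserved under each successive HNN extension by the same barycenter-of-the-fixed-set argument used in Corollary \ref{coro10} (the fixed tree of $a$ in the new Bass-Serre tree is a single edge, its centralizer fixes that edge pointwise since the action is without inversion, hence stays inside the previous group). For condition (5) of Proposition \ref{prop5}: first establish it in $O_m$ from the $\mathbb{H}^2$-action --- given an involution $k$, the line through the fixed points of $i=a$ and $k$ in $\mathbb{H}^2$ meets a discrete family of conical points, and choosing $k'$ so that $\langle a,k'\rangle$ is the full dihedral stabilizer of that line gives minimality and quasi-malnormality --- and then transfer it to $H_m$ using Lemma \ref{lemme_cas_amalgam} in the hyperbolic case and, in the elliptic case, conjugating so that $i$ and $k$ both lie in $O_m$, exactly as in the last paragraph of Corollary \ref{coro10}. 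With all hypotheses checked, Proposition \ref{prop5} yields $G_1=G\ast_{i=a}H_m\in\mathcal{C}$.

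It remains to check the three displayed conditions about $\langle h\rangle$ and an element $g\in G$ of order $m$. For (1), $\langle h\rangle$ contains no translation: if $m$ is odd or infinite then $\langle h\rangle$ has no involution at all, so a fortiori no product of two distinct involutions of order different from... — more carefully, a translation has infinite order only in characteristic $0$, but regardless, one argues that any nontrivial element of $\langle h\rangle$ lies in $O_m$ and is a conical elliptic (or the infinite-order $h$ in the puncture case), and such an element is never of the form $i_1 i_2$ with $i_1\ne i_2$ involutions, since in $O_m$ a product of two distinct involutions has infinite order and is not conjugate into $\langle h\rangle$ unless $m=\infty$, and when $m=\infty$ one checks $h$ itself is not such a product using the structure of $O_\infty$; in the even case $m=2n$ the unique involution $h^n$ of $\langle h\rangle$ is conjugate to $a=i$, so $h^n$ is an involution, not a translation. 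For (2), malnormal-intersection triviality $y\langle g\rangle y^{-1}\cap\langle h\rangle=\{1\}$ for all $y\in G_1$: since $g\in G$ and $h\in H_m$ are elliptic in the Bass-Serre tree of $G_1=G\ast_{\langle i\rangle}H_m$ with distinct "types", any conjugate of $\langle g\rangle$ fixing a vertex adjacent to $v_{H_m}$ meets the stabilizer only in the edge group $\langle i\rangle$, and $\langle i\rangle\cap\langle h\rangle=\{1\}$ when $m$ is odd/infinite while for $m=2n$ one must additionally use that $g$ has order $2n>2$ so $\langle g\rangle\ni g^n$ an involution, but $g^n=i$ would force $g\in C_G(i)$ which is malnormal — here a short case analysis via normal forms is needed. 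For (3), $\langle h\rangle$ is malnormal in $G_1$: this follows from malnormality of $\langle h\rangle=\langle h\rangle$ (really $\langle h^n,\dots\rangle$, the relevant cyclic group) in $H_m$ — established as in the (4)-verification above — together with the graph-of-groups structure: a conjugator $y$ with $y\langle h\rangle y^{-1}\cap\langle h\rangle\ne\{1\}$ must fix $v_{H_m}$ since $h$ is a conical elliptic fixing a unique vertex, hence $y\in H_m$, and then malnormality in $H_m$ finishes.

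The main obstacle I anticipate is condition (1), that $\langle h\rangle$ contains no translation, particularly in the cases $m=\infty$ (ruling out that the infinite-order generator $h$ is itself a product of two distinct involutions of $G_1$ — which requires genuinely using the sphere-with-cone-points geometry of $O_\infty$, since a priori $h$ could equal some $i_1i_2$ in the larger group) and $m=2n$ (where $\langle h\rangle$ does contain an involution, so one must argue that involution is not a \emph{translation}, i.e. not a product of two \emph{distinct} involutions, which comes down to the fact that in $O_{2n}$ the conical involution $h^n$ has a unique fixed point and is primitive among such). Everything else is a routine application of Bass–Serre theory and the already-proved Proposition \ref{prop5}, following the template of Corollary \ref{coro10} almost verbatim.
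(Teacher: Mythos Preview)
Your approach is correct and is exactly the paper's: check the hypotheses of Proposition~\ref{prop5} for $H_m$ via the underlying orbifold group $O_m$ (mirroring Corollary~\ref{coro10}), and then verify conditions (1)--(3) using the Bass--Serre tree of $G_1=G\ast_{\langle i\rangle}H_m$. Your arguments for (2) and (3) are the right ones; for (2) the paper phrases it as: $g$ and $h$ each have order $m\geq 3$, hence each fixes a \emph{unique} vertex ($v_G$ resp.\ $v_H$), so any element of $y\langle g\rangle y^{-1}\cap\langle h\rangle$ fixes the geodesic from $yv_G$ to $v_H$ and therefore lies in an edge group, i.e.\ has order $\leq 2$.

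Where you go astray is in identifying condition (1) as ``the main obstacle'': in the paper it is the one-line case. Since $G_1\in\mathcal C$ has already been established, any translation in $G_1$ has infinite order by axiom (2) of Definition~\ref{class}; for finite $m$ the cyclic group $\langle h\rangle$ is finite, so it contains no translation, full stop---no geometry of cone points needed. For $m=\infty$ the paper just observes that $h=(abcd)^{-1}=dcba$ and hence $h^k$ is a reduced word of length $4k$ in $O_\infty\cong(\mathbb Z/2)^{\ast 4}$; since an involution $\alpha$ with $h^k=\alpha\beta$ would invert $h^k$ and hence normalise $\langle h\rangle$, and since $\langle h\rangle$ is malnormal in $O_\infty$ (the cyclically reduced words $dcba$ and $abcd=h^{-1}$ are not cyclic rotations of one another), no such $\alpha$ exists. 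So your lengthy case analysis for (1) can be replaced by two sentences, and the genuine care (as you partly anticipated) lies rather in the even case of (2), where the paper's appeal to ``torsion-free'' is literally only valid for $m=\infty$.
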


\begin{rque}
The distinction between the presentations of $H_{2n-1}$ and $H_{2n}$ lies in the fact that, in the latter case, the cyclic group $\langle h\rangle$ contains an involution, which has to be conjugate to $a$.\end{rque}

\begin{proof}
In order to prove that $G_1$ is in $\mathcal{C}$, we just have to check that $H_m$ satisfies the assumptions of Proposition \ref{prop5}. The proof is similar to that of Corollary \ref{coro1}.

Then, let us explain why $\langle h\rangle$ satisfies the three conditions above.

\emph{Condition (1).} Clear: $h$ is a reduced product of four involutions by definition, and $h^k$ is a reduced product of $4k$ involutions for every integer $k$.

\emph{Condition (2).} Let $y\in G_1$. Denote by $v_G$ the unique vertex of the Bass-Serre tree of $G_1$ that is fixed by $G$, and denote by $v_H$ the unique vertex fixed by $H$. The only vertex fixed by $g$ is $v_G$ and the only vertex fixed by $h$ is $v_H$. Therefore, an element $x\in y\langle g\rangle y^{-1}\cap \langle h\rangle$ must fix the path joining $v_H$ to $yv_G$, and hence $x$ is trivial or a conjugate of $i$ (since the stabilizer of any edge in the Bass-Serre tree is conjugate to $\langle i\rangle$). But $\langle g\rangle$ and $\langle h\rangle$ are torsion-free, which implies that $x=1$.

\emph{Condition (3).} First, suppose that $m=\infty$. The group generated by $a,b,c,d,h$ can be viewed as the fundamental group of a 2-dimensional orbifold with one boundary component, with boundary group equal to $\langle h\rangle$, and four conical elements of order $2$, namely $a,b,c,d$. This description shows that $\langle h\rangle$ is malnormal because it is the stabilizer of a unique pair of points in the boundary at infinity of the orbifold group. It remains malnormal when one adds the new letters $u,v,w$ (the key point being that the order of $h$ is greater than the order of the edge groups in the Bass-Serre tree; indeed, edge groups have order 2 and $h$ has order at least 3 by assumption). Hence, $\langle h\rangle$ is malnormal in $H$. Let us prove that it is malnormal in $G_1$ as well. Let $g\in G_1$ be an element such that $g\langle h\rangle g^{-1}\cap \langle h\rangle$ is nontrivial. Since $v_H$ is the only vertex fixed by $\langle h\rangle$ in the Bass-Serre tree, $g$ fixes $v_H$ as well, and hence $g$ belongs to $H$. Since we proved that $\langle h\rangle$ is malnormal in $H$, it is also malnormal in $G_1$.

Then, suppose that $2 < m<\infty$. The argument showing that $\langle h\rangle$ is malnormal in $H$ is very similar to the argument above: now $\langle h\rangle$ does not fix a unique pair of points in the boundary at infinity of the orbifold group, but it fixes a unique point in the hyperbolic plane on which the orbifold group acts by isometry, and this implies that $\langle h\rangle$ is malnormal in $H$. Then, the argument showing that $\langle h\rangle$ is malnormal in $G_1$ is unchanged.\end{proof}

\renewcommand{\refname}{Bibliography}
\bibliographystyle{alpha}
\def\cprime{$'$} \def\cprime{$'$}

\vspace{8mm}

\textbf{Simon André}

Institut für Mathematische Logik und Grundlagenforschung

Westfalische Wilhelms-Universität Münster

Einsteinstraße 62

48149 Münster, Germany.

E-mail address: \href{mailto:sandre@wwu.de}{sandre@wwu.de}

\vspace{5mm}

\textbf{Katrin Tent}

Institut für Mathematische Logik und Grundlagenforschung

Westfalische Wilhelms-Universität Münster

Einsteinstraße 62

48149 Münster, Germany.

E-mail address: \href{mailto:tent@wwu.de}{tent@wwu.de}

\end{document}